\documentclass[12pt]{amsart}
\usepackage{fullpage}
\usepackage{latexsym,amsmath,amsfonts,amsthm,amssymb,color}
\usepackage[all]{xy}
\usepackage{hyperref} 
\usepackage{enumitem}
\usepackage[T1]{fontenc}

\usepackage{color}

\usepackage{mathrsfs}
\usepackage[bottom]{footmisc}

\newtheorem{theorem}{Theorem}
\newtheorem{lemma}[theorem]{Lemma}
\newtheorem{proposition}[theorem]{Proposition}
\newtheorem{corollary}[theorem]{Corollary}
\newtheorem{definition}[theorem]{Definition}
\newtheorem{remark}[theorem]{Remark}

\numberwithin{equation}{section}
\numberwithin{theorem}{section}

\newcommand{\R}{{\mathbb R }}
\newcommand{\N}{{\mathbb N }}
\newcommand{\C}{{\mathbb C }}
\newcommand{\Z}{{\mathbb Z }}
\newcommand{\dbar}{\bar \partial}
\newcommand{\qT}{q}
\newcommand{\tT}{\mathbf s}
\newcommand{\Ic}{{\partial}^{\,-1}}
\newcommand{\Icb}{\bar \partial^{-1}}
\newcommand{\Icbk}{\ol{\partial}^{\,-1}_k}
\newcommand{\dnuz}{dz}
\newcommand{\dnuk}{dk}
\newcommand{\dkb}{\frac{\partial}{\partial\ol k}}

\newcommand{\dzb}{\bar \partial}
\newcommand{\dz}{\partial}
\newcommand{\ol}{\overline}
\newcommand{\half}{\frac{1}{2}}
\newcommand{\Hq}{\mathcal H_q}

\begin{document}

\title{A Nonlinear Plancherel Theorem with Applications to Global Well-Posedness
for the Defocusing Davey-Stewartson Equation and to the Inverse
Boundary Value Problem of Calder\'on}

\author{Adrian Nachman}
\author{Idan Regev}
\author{Daniel Tataru}

\address{Department of Mathematics, University of Toronto, Toronto, ON M5S-2E4}
\email{nachman@math.toronto.edu}
\email{idan.regev@utoronto.ca}

\address{Department of Mathematics, University of California, Berkeley, CA 94720}
\email{tataru@math.berkeley.edu}

\begin{abstract}
  We prove a Plancherel theorem for a nonlinear Fourier transform in
  two dimensions arising in the Inverse Scattering method for the
  defocusing Davey-Stewartson II equation.  We then  use it to prove global
  well-posedness and scattering in $L^2$ for defocusing DSII. This Plancherel theorem also
  implies global uniqueness in the inverse boundary value problem of
  Calder\'on in dimension $2$, for conductivities $\sigma>0$ with  $\log \sigma \in \dot H^1$.  The proof
  of the nonlinear Plancherel theorem includes new estimates on
  classical fractional integrals, as well as a new result on
  $L^2$-boundedness of pseudo-differential operators with non-smooth
  symbols, valid in all dimensions.
\end{abstract}

\maketitle \tableofcontents

\section{Introduction}\label{Section:Intro}

The Davey-Stewartson equations are a family of nonlinear
Schr\"odinger (NLS) type equations in $2+1$ dimensions, which
model the evolution of weakly nonlinear surface water waves
traveling principally in one direction \cite{DS}. A rigorous
derivation from the water wave problem in the modulational scaling
regime is provided in \cite{CSS}. Depending on two sign choices,
one classifies the Davey-Stewartson systems as elliptic-elliptic,
hyperbolic-elliptic, elliptic-hyperbolic and
hyperbolic-hyperbolic. Within each class there are two nontrivial
choices of parameters to be made.

In this paper we are interested in the Cauchy problem
for the specific  case known as
the defocusing DSII problem. This model belongs to the hyperbolic-elliptic family,
with a special choice for the parameters. The equations have the form
\begin{align}\label{DSIIdefoc}
\begin{cases}
i\partial_t \qT + 2(\dzb^2+\dz^2)\qT+\qT(r+\ol r) = 0\\
\dzb r + \dz(|\qT|^2)=0\\
\qT(0,z) = \qT_0(z).
\end{cases}
\end{align}
Here and throughout the paper we use the notation
\begin{align*}
z = x_1+i x_2
\end{align*}
for points in the plane, and
\begin{align}\label{dzbdz}
\dzb = \half\Big(\frac{\partial}{\partial x_1}+ i
\frac{\partial}{\partial x_2}\Big), \qquad \dz =
\half\Big(\frac{\partial}{\partial x_1}- i
\frac{\partial}{\partial x_2}\Big).
\end{align}

This system (as well as all other DS systems) is mass critical, i.e.
the $L^2_z$ norm of the solution (the mass) is invariant
with respect to the natural scaling associated to it,
\[
\qT(t,z) \to \lambda \qT(\lambda^2 t,\lambda z).
\]
Local well posedness in $L^2$ and global existence for small
initial data have been established for the general family of
Davey-Stewartson equations  in \cite{GuiSau},
\cite{LP93}, \cite{HS95} using dispersive methods. However,
the large data problem has yet to be understood in general.

The defocusing DSII model considered here has the feature that it
is completely integrable, as found in \cite{AH}. In this paper we
use the Inverse Scattering method to investigate the Cauchy
problem in $L^2$ for large initial data.

Precisely, our main goal here will be to prove a Plancherel theorem for a
two-dimensional nonlinear Fourier transform (known as the Scattering
Transform) associated to this system. We then use this result to show
global well-posedness and scattering for (\ref{DSIIdefoc}) for any
initial data in $L^2(\R^2)$, i.e. in the mass-critical
case. Furthermore, the method yields a precise description of the
large-time behaviour of the solutions for any initial data $\qT_0$ in
$L^2(\R^2)$ in terms of its Scattering Transform. The Plancherel
theorem implies completeness of the wave operators (in the sense of
nonlinear scattering theory).

In a different application, we show how this nonlinear Plancherel
theorem also implies global uniqueness for the inverse boundary
value problem of Calder\'on in dimension 2, for conductivities
$\sigma>0$ with  $\log \sigma \in \dot H^1$. We will briefly
recall some of the background for these problems below.

\subsection{The Scattering Transform}
We start with a quick formal definition of the scattering
transform. Given a function $\qT(z)$ on $\R^2\simeq\C$ and
$k\in\C$, solve the two equations
\begin{align}\label{eq_n}
\frac{\partial}{\partial\ol z} m_\pm =  \pm  e_{-k} \qT \ol
{m_\pm}
\end{align}
with $m_\pm(z,k)\rightarrow 1$ as $|z|\rightarrow\infty$. (We use
the notation $e_k(z) = e^{i(zk+\ol{zk})}$ first introduced in
\cite{Nac}). The scattering transform of $\qT$ is then defined as
\begin{align}\label{tT}
\mathcal S\qT(k) = \frac{1}{2\pi i}\int_{\R^2} e_k(z) \ol{\qT(z)}
\Big(m_+(z,k) + m_-(z,k)\Big) dz,
\end{align}
where $dz = dx_1dx_2$ denotes the Lebesgue measure on $\R^2$. Note
that different authors have slightly different conventions.

As seen in \eqref{eq_n}, a key step in the analysis is to be
able to invert d-bar operators $L_q$ of the form
\[
L_\qT u = \dzb u + \qT \ol u
\]
under only the assumption that $q \in L^2$. Using the Sobolev embedding $\dot H^{\half}\subset L^4$,
it is easy to see that $L_\qT$ has the following mapping property:
\[
L_\qT : \dot H^\frac12 \to \dot H^{-\frac12},
\]
where $\dot H^s$ is the homogeneous fractional Sobolev space of order $s\in\R$, defined as
\[
\{f\in\mathscr S\,'\,\Big|\, \hat f \text{ is a measurable function with}\, \|f\|_{\dot H^s}:=\|\,|\cdot|^s\hat f(\cdot)\|_{L^2}<\infty\}.
\] 
Then it is natural to consider the solvability question for
the corresponding inhomogeneous problem
\begin{align}\label{LqEq}
L_q u  = f
\end{align}
in the $\dot H^\frac12 \to \dot H^{-\frac12}$ setting. Our main
result on this problem is as follows:

\begin{theorem} \label{ThmMain}
Let $\qT \in L^2$. Then for each $f \in \dot H^{-\frac12}$ there
exists a unique solution $u \in \dot H^\frac12$ of  the inhomogeneous problem \eqref{LqEq}
with
\begin{equation}\label{Lq-inv}
\| u\|_{\dot H^\frac12} \leq C(\|q\|_{L^2}) \|f\|_{ \dot H^{-\frac12}}.
\end{equation}
\end{theorem}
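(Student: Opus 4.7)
The plan is to convert \eqref{LqEq} into a fixed-point equation $(I+T_q)u=\dbar^{-1}f$ on $\dot H^{1/2}$ and apply Fredholm theory, the real work being the proof of uniqueness.

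\textbf{Setup and small data.} Since $\dbar:\dot H^{1/2}\to \dot H^{-1/2}$ is an isometric isomorphism, set $T_q u:=\dbar^{-1}(q\bar u)$; then \eqref{LqEq} becomes $(I+T_q)u=\dbar^{-1}f$. The Sobolev embedding $\dot H^{1/2}(\R^2)\hookrightarrow L^4$, its dual $L^{4/3}\hookrightarrow\dot H^{-1/2}$, and H\"older's inequality give
\[
\|T_q u\|_{\dot H^{1/2}} \;=\; \|q\bar u\|_{\dot H^{-1/2}} \;\lesssim\; \|q\|_{L^2}\|u\|_{\dot H^{1/2}},
\]
so $T_q$ is a bounded $\R$-linear (antilinear in $u$) operator with norm $\lesssim\|q\|_{L^2}$. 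For $\|q\|_{L^2}$ below the implicit constant, $T_q$ is a strict contraction and a Neumann series gives $(I+T_q)^{-1}$ together with \eqref{Lq-inv}.

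\textbf{Fredholm reduction.} For general $q\in L^2$, split $q=q_\flat+q_\sharp$ with $q_\flat\in C_c^\infty$ and $\|q_\sharp\|_{L^2}$ below the contraction threshold, so $(I+T_{q_\sharp})^{-1}$ exists by the previous step. Applying it on the left reduces \eqref{LqEq} to $\bigl(I+(I+T_{q_\sharp})^{-1}T_{q_\flat}\bigr)u = (I+T_{q_\sharp})^{-1}\dbar^{-1}f$. The operator $T_{q_\flat}$ is compact on $\dot H^{1/2}$: for any bounded sequence $u_n$, Rellich--Kondrachov on a ball containing $\operatorname{supp} q_\flat$ extracts a subsequence converging in $L^{4/3}$ there; since $q_\flat$ is bounded and compactly supported, $q_\flat\bar u_n$ converges in $L^{4/3}\hookrightarrow \dot H^{-1/2}$, and applying $\dbar^{-1}$ returns convergence in $\dot H^{1/2}$. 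Hence $I+T_q$ is a compact perturbation of an isomorphism, and thus Fredholm of index zero.

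\textbf{Uniqueness --- the main obstacle.} It remains to show $\ker(I+T_q)=\{0\}$, i.e.\ that $\dbar u+q\bar u=0$ with $u\in\dot H^{1/2}$ forces $u\equiv 0$. My approach would be a Bers--Vekua similarity principle: on $\{u\neq 0\}$ the coefficient $\widetilde q:=q\bar u/u$ satisfies $|\widetilde q|=|q|\in L^2$, and the equation reads $\dbar u=-\widetilde q\, u$, which linearizes under the substitution $u=e^{-s}\Phi$ with $s:=\dbar^{-1}\widetilde q$ to $\dbar\Phi=0$. Using that $\dbar^{-1}$ maps $L^2$ into $\operatorname{BMO}$, $e^{\pm s}$ lies in a sharp John--Nirenberg class; the resulting two-sided bound forces $\Phi$ to inherit enough regularity and decay from $u\in \dot H^{1/2}$ that Liouville applies, giving $\Phi\equiv 0$, since no nonzero entire holomorphic function lies in $\dot H^{1/2}(\R^2)$ (its Fourier transform is supported at the origin). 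The delicate point is to justify this substitution at only $L^2$ regularity of $q$; I would handle it by approximating $q$ by smooth compactly supported $q_n$, performing the classical substitution for each $q_n$, and passing to the limit using the quantitative bounds of the previous two steps. Combining Fredholm with uniqueness yields invertibility of $I+T_q$, and the uniformity of the constant $C(\|q\|_{L^2})$ in \eqref{Lq-inv} follows from a standard contradiction/compactness argument applied along bounded sequences of potentials in $L^2$.
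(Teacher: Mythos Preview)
Your Fredholm reduction and the Vekua-type uniqueness argument are in the right spirit and roughly parallel the paper's Lemmas~\ref{perturb} and~\ref{LemLqInv} (the paper avoids the BMO/John--Nirenberg issue by splitting $q=q_n+q_s$ with $q_n\in L^{p_1}\cap L^{p_2}$, $p_1<2<p_2$, and performing the similarity substitution only with $q_n$, so that $s\in L^\infty$; the small remainder $q_s$ is then absorbed perturbatively). That part of your proposal, while sketchy, could likely be made rigorous along the paper's lines.

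The genuine gap is your final sentence. The uniformity of the constant in \eqref{Lq-inv} --- i.e.\ that $C$ depends only on $\|q\|_{L^2}$ and not on $q$ itself --- is precisely the \emph{main content} of the theorem, and it does \emph{not} follow from any standard compactness argument. The operator $L_q$ is invariant under the two-parameter group of dilations and translations $q\mapsto \lambda^{-1}q(\lambda(\cdot-y))$, which acts isometrically on $L^2$; hence bounded sequences in $L^2$ need not have any subsequence converging in $L^2$ (or in any topology strong enough to control $C(q)$ by naive perturbation), even modulo a single symmetry. A sequence $q_n$ with $\|q_n\|_{L^2}\to R_0$ and $C(q_n)\to\infty$ could split into several pieces, each carried by its own scale/center pair, with no single limit profile.

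The paper's resolution occupies most of Section~\ref{Section:CC}: one extends the perturbative theory to the weaker Besov topology $\dot B^{-1/3,3}_\infty$ (Theorem~\ref{ThmBilEst}, Lemma~\ref{l:cont+}), applies an elliptic profile decomposition (Proposition~\ref{p:BG}) to the extremizing sequence, and then --- in the multi-profile case --- constructs an approximate inverse for $L_{q_n}$ by gluing together the inverses associated to the individual profiles (Proposition~\ref{PropLimC}), using carefully designed localization operators and almost-orthogonality to control the cross terms. This concentration-compactness step is the heart of the proof and cannot be replaced by an off-the-shelf argument; without it you have only established that $C(q)<\infty$ for each $q$, which is the content of Lemma~\ref{LemLqInv}, not Theorem~\ref{ThmMain}.
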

A key point here is that the  constant depends only on the $L^2$ norm of $q$.
This will later  allow us to show that the solutions of (\ref{eq_n}) are bounded by
constants that depend only on the $L^2$ norm of $\qT$.

The above theorem is proved in Section~\ref{Section:CC}. To show
(\ref{Lq-inv}) we will borrow techniques developed in the modern
treatment of nonlinear PDEs in critical cases: induction on energy
and profile decompositions, in order to deal with the lack of
compactness (\cite{KM06},\cite{G98}, \cite{BG} ). The novelty here
is that these ideas will be used in a nonstandard fashion and on
the static equations (\ref{eq_n}), rather than on the nonlinear
flow (\ref{DSIIdefoc}).

In Section \ref{Section:Scattering} we will show how to use the above result
in order to construct $m_\pm$ and $\mathcal Sq$ assuming only $\qT\in L^2(\R^2)$.

The Scattering Transform can be viewed as a nonlinear Fourier
transform, and it shares many of the same properties.  The
linearization of $\mathcal S$ at $\qT=0$ is essentially the Fourier
transform:
\begin{align}\label{S_Four}
\mathcal S\qT(k) = \ol{\hat\qT(k)} + \mathcal O(\qT^2)
\end{align}
where
\begin{align}\label{Four}
\hat\qT(k) = \frac{i}{\pi}\int_{\R^2}e_{-k}(z)\qT(z)dz.
\end{align}
We will use the normalization (\ref{Four}) for the Fourier
transform throughout the paper (except in Section
\ref{Section:Estimates}).

Writing $\tT:=\mathcal S\qT$, and setting
\begin{align}
n_\pm := \frac{1}{2}\Big((m_+ + m_-)\pm e_{-k}\ol{(m_+ -
m_-)}\,\Big),
\end{align}
it turns out that the functions $n_\pm(z,k)$ solve equations in
$k$ which are the same as those solved by $m_\pm(z,k)$ in $z$,
with $\qT(z)$ replaced by $\tT(k)$:
\begin{align}\label{eq_m}
\dkb n_\pm = \pm e_{-k}\tT\ol{n_\pm}
\end{align}
with $n_\pm(z,k)\rightarrow 1$ as $|k|\rightarrow\infty$. The
Inverse Scattering transform of $\tT$ is then defined as
\begin{align}\label{ItT}
\mathcal I\tT(z) = \frac{1}{2\pi i}\int_{\R^2} e_k \ol{\tT(k)}
\Big(n_+(z,k) + n_-(z,k)\Big) dk.
\end{align}
Note that $n_++n_-=m_++m_-$ and under appropriate conditions on
$\qT$, one can show that $\qT=\mathcal I(\tT)$. Thus, with the
above notation conventions, the scattering transform is an
involution $\mathcal S^2=I$.

If we now evolve the potential $\qT$ according to the DSII
equation (\ref{DSIIdefoc}), the corresponding scattering data evolves (as was shown in \cite{BC}; see also \cite{Per}) according to:
\begin{align}\label{tT_time}
\frac{\partial}{\partial t}\tT(t,k) &= 2i(k^2+\ol k^2)\tT(t,k).
\end{align}
Thus, the Cauchy problem for the nonlinear equation
(\ref{DSIIdefoc}) may be solved in a manner analogous to the use
of the Fourier transform for linear PDEs, by performing
forward-scattering on the initial data $\qT_0$ then evolving the
scattering data linearly in time according to (\ref{tT_time}) and
then performing Inverse Scattering to determine $\qT$ at time $t$,
namely
\begin{align}\label{ST}
\begin{cases}
\tT_0(k) &= \mathcal{S}\qT_0(k)\\
\tT(t,k) &= e^{2i(k^2+\ol k^2)t}\tT_0(k)\\
\qT(t,z) &= \mathcal I\big(\tT(t,k)\big)(z).
\end{cases}
\end{align}

This Inverse Scattering approach to the solution of the DSII
equations dates back to Ablowitz and Fokas (\cite{AF82},
\cite{AF83} and \cite{AF84}) and Beals and Coifman (\cite{BC85},
\cite{BC} and \cite{BC89}). Beals and Coifman showed that for
initial data in the Schwartz class, (\ref{eq_n}) and (\ref{eq_m}) are
solvable, and the corresponding scattering data is also in the 
Schwartz class. They also proved that for potentials in the 
Schwartz class, the scattering transform satisfies the nonlinear Plancherel identity
\begin{align}\label{BC_Plan}
\int |\qT(z)|^2dz = \int|\tT(k)|^2dk,
\end{align}
and is a symplectomorphism.

Sung (\cite{SunI}, \cite{SunII}, \cite{SunIII}) carried out the
analysis of the scattering transform and its inverse to solve the
defocusing DSII for initial data $\qT_0\in L^2\cap L^p$ for some
$p\in [1,2)$ with $\hat\qT_0\in L^1\cap L^\infty$. Brown and
Uhlmann \cite{BU} proved that for $\qT\in L^p_c$ where $p>2$, the
scattering data $\tT\in L^2$. Tamasan \cite{Tam} proved that for
$\qT\in W^{\varepsilon,p}_c$, where $\varepsilon>0$ and $p>2$, the
scattering data $\tT\in L^r$ for each $r > 2/(\varepsilon + 1)$.
Brown \cite{Bro} proved the Plancherel identity and Lipschitz
continuity of the scattering transform for $\qT\in L^2$ of
sufficiently small norm. Brown estimated directly the series
expansion of $\tT$ in multi-linear terms in $\qT$ (see also
\cite{ZB11} for such estimates). He stated as open questions
whether one can remove the smallness assumption and whether
solutions to (\ref{DSIIdefoc}) can be constructed when $\qT$ is in
$L^2$. We will address these questions in this paper.

There has been significant recent progress on the problem of the
validity of the Plancherel identity (\ref{BC_Plan}) without a
smallness assumption. Perry \cite{Per} proved that for $\qT$ in
the weighted Sobolev space $H^{1,1}$ the scattering data $\tT\in
H^{1,1}$. In addition, he proved local Lipschitz continuity of the
map $\mathcal{S}:H^{1,1}\rightarrow H^{1,1}$. He used these
results to show global well-posedness for defocusing DSII for
initial data in $H^{1,1}$. Astala, Faraco and Rogers \cite{AFR}
sharpened part of Perry's proof to show local Lipschitz continuity
of the scattering map $\mathcal{S}$ from $H^{s,s}$ to $L^2$ for $s
\in (0, 1)$ thus extending the Plancherel identity to this space.
Perry, Otto and Brown \cite{BOP} then showed that the scattering
transform maps $\qT\in H^{\alpha,\beta}$ to $\tT\in
H^{\beta,\alpha}$ for $\alpha,\beta>0$ thus establishing further
precise analogy between the properties of the scattering transform
and the Fourier transform.

In this paper we prove the Plancherel theorem for the Scattering
Transform for general $\qT$ in $L^2 (\R^2)$. To do so, we need new
bounds on  $\Icb$ (or, more generally, fractional integrals),
which, in the presence of an oscillatory term (see (\ref{eq_n}))
allow us to capture the behaviour of the functions $m_\pm(z,k)\rightarrow 1$ as $|k|\rightarrow\infty$
without assuming any smoothness on $\qT$. As well, in order to
make sense of the formula (\ref{tT}), we will need a new result on
the $L^2$-boundedness of pseudo-differential operators with
non-smooth symbols. In Section \ref{Section:Estimates}, we give
proofs of these bounds valid in any dimension, as they may be of
independent interest.

We are now ready to state precisely our Plancherel theorem.

\begin{theorem}\label{ThmScatteringTransform}
The nonlinear Scattering Transform $\mathcal{S}:\qT\mapsto \tT$ is
a $C^1$ diffeomorphism ${\mathcal{S}}: L^2(\R^2)\rightarrow L^2(\R^2)$,
satisfying:
\newline
\begin{enumerate}
\item  The Plancherel Identity:
\begin{align}\label{t1-l2}
\|{\mathcal{S}}\qT\|_{L^2} = \|\qT\|_{L^2}.
\end{align}
\item  The pointwise bound:
\begin{align}\label{t1-point}
&|{\mathcal{S}}\qT(k)|\leq C(\|\qT\|_{L^2})M\hat \qT(k)
\end{align}
for a.e. $k$, where M denotes the Hardy-Littlewood Maximal
function.
\item Locally uniform bi-Lipschitz continuity:
\begin{align}\label{t1-lip}
\frac{1}{C}\|{\mathcal S}\qT_1 - {\mathcal S}\qT_2\|_{L^2}\leq
\|\qT_1 - \qT_2\|_{L^2}\leq C\|{\mathcal S}\qT_1 - {\mathcal
S}\qT_2\|_{L^2}
\end{align}
where
\begin{align*}
C = C(\|\qT_1\|_{L^2})C( \|\qT_2\|_{L^2}).
\end{align*}
\item Bound on the derivative:
\begin{align}\label{t1-diff}
\Big\| \frac{\delta\mathcal S}{\delta \qT} \Big\|_{L^2 \to L^2} \leq C(\|\qT\|_{L^2}).
\end{align}
\item Inversion Theorem: \begin{align*}
\mathcal S^{-1}=\mathcal S.
\end{align*}
\item$\mathcal S$ is a symplectomorphism \footnote{We are grateful to the anonymous referee who suggested that we should also prove this additional property for the Scattering Transform.}: for every $\qT, \qT_1, \qT_2\in L^2(\R^2)$
\begin{align}
\omega_2(\frac{\delta\mathcal S}{\delta \qT}\Big|_{\qT}\qT_1,\frac{\delta\mathcal S}{\delta \qT}\Big|_{\qT}\qT_2) = \omega_1(\qT_1,\qT_2),
\end{align}
where $\omega_1$, $\omega_2$ are the symplectic forms
\begin{align*}
\omega_1(\qT_1,\qT_2) = -\Im \int\qT_1(z)\ol {\qT_2(z)}dz, \quad \omega_2(t_1,t_2) = -\Im \int\ol{t_1(k)}t_2(k)dk.
\end{align*}
\end{enumerate}
\end{theorem}
We will in fact prove an identity more general than (\ref{t1-l2}) (see Corollary \ref{cor_Plan}) which shows to what extent $S$ departs from being an isometry. Furthermore, explicit formulas for the derivative $\frac{\delta\mathcal S}{\delta \qT}$ and its inverse are given in Lemma \ref{LemDiffeom}.

As a consequence of properties (1), (2) and (5) above, we note the
following pointwise bound on $\qT$ in terms of the Fourier
transform of its scattering transform.
\begin{corollary}\label{CorBound}
If $\qT \in L^2(\R^2)$ and $\tT= \mathcal S(\qT)$ then for a.e.
$z$ we have:
\begin{align*}
&|\qT(z)|\leq C(\|\qT\|_{L^2})M\hat \tT(z).
\end{align*}
\end{corollary}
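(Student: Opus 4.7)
The plan is to simply chain together properties (5), (2), and (1) of Theorem~\ref{ThmScatteringTransform}. The corollary is essentially a restatement of the pointwise bound~\eqref{t1-point} applied to the scattering data $\tT$ in place of $q$, once the involution identity $\mathcal S^{-1}=\mathcal S$ is used to rewrite $q$ as $\mathcal S\tT$.

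More concretely, first I would invoke property (5), which gives $\mathcal S\tT = \mathcal S^2 q = q$. This identifies the potential $q$ with the scattering transform of $\tT$. Second, I would apply the pointwise estimate~\eqref{t1-point} of property (2), but with $\tT$ playing the role of the input potential. This yields, for a.e.\ $z$,
\[
|q(z)| = |\mathcal S\tT(z)| \leq C(\|\tT\|_{L^2})\, M\hat\tT(z).
\]

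Finally, I would use the Plancherel identity~\eqref{t1-l2} from property (1) to replace $\|\tT\|_{L^2}$ by $\|q\|_{L^2}$ inside the constant $C$, producing the claimed bound
\[
|q(z)| \leq C(\|q\|_{L^2})\, M\hat\tT(z).
\]

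There is no genuine obstacle here; the only thing to verify is that the pointwise bound~\eqref{t1-point} is indeed applicable to arbitrary $L^2$ inputs (including $\tT$), which follows from the fact that $\mathcal S$ maps $L^2$ to $L^2$ as asserted in the theorem, so that $\tT\in L^2$ is a legitimate argument for $\mathcal S$.
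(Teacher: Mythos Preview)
Your proposal is correct and follows exactly the approach indicated in the paper, which simply notes that the corollary is a consequence of properties (1), (2), and (5) of Theorem~\ref{ThmScatteringTransform}. The chaining you describe---use (5) to write $q=\mathcal S\tT$, apply (2) to $\tT$, then use (1) to rewrite the constant---is precisely what is intended.
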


\bigskip

\subsection{Global Well-Posedness for the Defocusing DSII Problem}
One immediate application of Theorem \ref{ThmScatteringTransform}
will be to show global well-posedness of the Cauchy problem for
the defocusing Davey-Stewartson equation for arbitrary initial
data in $L^2(\R^2)$. In particular, the above Corollary will yield
pointwise control of the solution to DSII by the maximal function
of a solution of the linear flow and thus will allow us to
transfer Strichartz estimates on the linearization of
(\ref{DSIIdefoc}) to bounds on the nonlinear flow.

We recall the formulation of (\ref{DSIIdefoc}) as an integral
equation (\cite{GuiSau}). The Cauchy problem (\ref{DSIIdefoc}) has
a corresponding linear flow
\begin{align}\label{LinDSII}
\begin{cases}
&i\partial_t \tilde\qT + 2(\dzb^2+\dz^2)\tilde\qT = 0\\
&\tilde\qT(0,z)=\qT_0(z).
\end{cases}
\end{align}
Let $U(t)$ be the solution operator to the linear problem
(\ref{LinDSII})
\begin{align*}
U(t)\qT_0:=\tilde\qT(t,\cdot)= e^{2it(\dz^2+\dzb^2)}\qT_0.
\end{align*}
Using Duhamel's principle, (\ref{DSIIdefoc}) can be written as the
following nonlinear integral equation for $\qT(t)=:\qT(t,\cdot)$
\begin{align}\label{Duhamel}
\qT(t) = U(t)\qT_0+\Lambda(\qT)(t)
\end{align}
with
\begin{align}\label{DuhamelInt}
\Lambda(\qT)(t) = i\int_0^t
U(t-s)\Big(\qT(s)(\dzb\Ic+\dz\Icb)|\qT(s)|^2\Big)ds.
\end{align}
Ghidaglia and Saut~\cite{GuiSau} proved that for any $\qT_0\in L^2(\C)$ the
problem (\ref{Duhamel}) has a unique solution in the Strichartz type space
\begin{align*}
X_T := C([0,T],L^2_z(\C))\cap L^4_{t,z}([0,T]\times\C)
\end{align*}
for some $T$ which depends on $\qT_0$; they also showed that for
$\qT_0$ with sufficiently small $L^2$ norm this holds for all $T$.
Using the Inverse Scattering method, Perry proved global well
posedness for general initial data $\qT_0\in H^{1,1}$. Our
Plancherel Theorem yields the following:

\begin{theorem} \label{ThmDSII}(Global well-posedness for defocusing DSII on $L^2$)
Given $\qT_0\in L^2$, there exists a unique solution to the Cauchy
problem (\ref{DSIIdefoc}) in the sense of equation (\ref{Duhamel})
such that:
\begin{enumerate}
\item Regularity:
\[
\qT(t,z)\in C(\R, L^2_z(\C))\cap L^4_{t,z}(\R\times\C).
\]
\item Uniform bounds: conservation of mass $\|\qT(t,\cdot)\|_{L^2} = \|\qT_0\|_{L^2}$ for all $t\in \R$  and
\[
\int_\R\int_{\R^2}|\qT(t,z)|^4dzdt \leq
C(\|\qT_0\|_{L^2}).
\]
\item Pointwise bound:
\[
|\qT(t,z)| \leq C(\|\qT_0\|_{L^2})M\qT^{\text{lin}}(t,z)
\]
where
\[
\qT^{\text{lin}}(t,\cdot) = U(t)\ol{\widehat{\mathcal S\qT_0}}.
\]
\item Stability:  if  $\qT_1(t,\cdot)$ and $\qT_2(t,\cdot)$ are
two solutions corresponding to initial data $\qT_1(0,\cdot)$  and
$\qT_2(0,\cdot)$ with $\|\qT_j(0,\cdot)\|_{L^2}\leq R$ then
\[
\|\qT_1(t, \cdot) -\qT_2(t,
\cdot)\|_{L^2} \leq C(R) \|\qT_1(0,\cdot) - \qT_2(0, \cdot)\|_{L^2} \qquad \text{for all $t\in\R$.}
\]
\end{enumerate}
\end{theorem}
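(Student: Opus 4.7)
The idea is to realize the solution of~\eqref{DSIIdefoc} via the scattering transform and read off each listed property from Theorem~\ref{ThmScatteringTransform} and Corollary~\ref{CorBound}. Given $\qT_0\in L^2$, set $\tT_0:=\mathcal{S}\qT_0$, let $\tT(t,k):=e^{2it(k^2+\ol k^2)}\tT_0(k)$, and define
\[
\qT(t,\cdot):=\mathcal{S}(\tT(t,\cdot)),
\]
using the inversion property $\mathcal{S}^{-1}=\mathcal{S}$; in particular, $\mathcal{S}\qT(t,\cdot)=\tT(t,\cdot)$. Since the phase has modulus one, $\|\tT(t,\cdot)\|_{L^2}=\|\tT_0\|_{L^2}$, and the Plancherel identity~\eqref{t1-l2} yields $\|\qT(t,\cdot)\|_{L^2}=\|\qT_0\|_{L^2}$. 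Continuity of $t\mapsto\tT(t,\cdot)$ in $L^2_k$ is immediate from dominated convergence, and the bi-Lipschitz bound~\eqref{t1-lip} promotes this to $\qT\in C(\R,L^2_z)$, accounting for (1) and the mass equality in (2).

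For the pointwise bound (3), apply Corollary~\ref{CorBound} to $\qT(t,\cdot)$ to obtain
\[
|\qT(t,z)|\leq C(\|\qT_0\|_{L^2})\,M\bigl|\widehat{\tT(t,\cdot)}\bigr|(z).
\]
A direct computation using the normalization~\eqref{Four} shows that $\overline{\widehat{\tT(t,\cdot)}(z)}=U(t)\overline{\hat{\tT}_0}(z)=\qT^{\text{lin}}(t,z)$, because the plane waves $e^{i(zk+\ol z\ol k)}$ are eigenfunctions of $2(\dz^2+\dzb^2)$ with eigenvalue $-2(k^2+\ol k^2)$, so the $k$-multiplier $e^{2it(k^2+\ol k^2)}$ is intertwined by~\eqref{Four} with $U(t)$ up to complex conjugation. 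Since $M$ only sees modulus, this is (3). The Hardy-Littlewood maximal operator is bounded on $L^4_{t,z}$, and the two-dimensional linear DSII flow obeys the Strichartz bound $\|U(t)f\|_{L^4_{t,z}}\leq C\|f\|_{L^2}$, so
\[
\|\qT\|_{L^4_{t,z}}\leq C(\|\qT_0\|_{L^2}),
\]
completing (2).

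The main obstacle is verifying that the $\qT$ produced by this recipe actually satisfies the Duhamel equation~\eqref{Duhamel}: the scattering formalism formally conjugates~\eqref{DSIIdefoc} into the trivial linear ODE~\eqref{tT_time} for $\tT$, but this equivalence must be promoted to an honest identity at $L^2$ regularity. The plan is to approximate $\qT_0$ in $L^2$ by smooth, rapidly decaying data $\qT_0^{(n)}\in H^{1,1}$, for which Perry's theorem~\cite{Per} guarantees that the scattering recipe produces a classical solution of~\eqref{Duhamel}. Continuity of $\mathcal{S}$ on $L^2$ (items (3)-(4) of Theorem~\ref{ThmScatteringTransform}) propagates convergence of the scattering data into convergence $\qT^{(n)}\to\qT$ in $C_tL^2_z$, while the uniform Strichartz bound above permits passage to the limit in $L^4_{t,z}$ inside the nonlinear Duhamel integral. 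Uniqueness on each time interval is then inherited from the local $X_T$-theory of~\cite{GuiSau}, and preservation of $L^2$ mass extends local solutions globally.

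Finally, stability (4) follows by applying~\eqref{t1-lip} twice: first mapping $\qT_j(0)\mapsto\tT_{j,0}$, and then mapping $\tT_j(t)=e^{2it(k^2+\ol k^2)}\tT_{j,0}\mapsto\qT_j(t)$. The phase is an $L^2_k$-isometry, so the two applications compose to give $\|\qT_1(t)-\qT_2(t)\|_{L^2}\leq C(R)\|\qT_1(0)-\qT_2(0)\|_{L^2}$ uniformly in $t$, as required.
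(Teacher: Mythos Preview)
Your overall strategy mirrors the paper's: define the candidate solution via $\qT(t)=\mathcal S\bigl(e^{2it(k^2+\bar k^2)}\mathcal S\qT_0\bigr)$ and read off (1)--(4) from Theorem~\ref{ThmScatteringTransform} and Corollary~\ref{CorBound}. Your derivations of mass conservation, $C_tL^2_z$ continuity, the pointwise bound, the $L^4$ Strichartz bound, and the stability estimate are all correct and essentially identical to what the paper does.

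There is, however, a genuine gap in your argument that the scattering-defined $\qT$ actually satisfies the Duhamel equation~\eqref{Duhamel}. You write that ``the uniform Strichartz bound above permits passage to the limit in $L^4_{t,z}$ inside the nonlinear Duhamel integral.'' But a uniform $L^4$ bound on the approximating sequence $\qT^{(n)}$ does \emph{not} imply $L^4_{t,z}$ convergence to $\qT$; at best it gives weak convergence, which is not enough to handle the cubic nonlinearity $\qT\,L|\qT|^2$. To pass to the limit in $L^{4/3}_{t,z}$ for the nonlinear term you need
\[
\|\qT^{(n)}L|\qT^{(n)}|^2-\qT L|\qT|^2\|_{L^{4/3}_{t,z}}\lesssim \|\qT^{(n)}-\qT\|_{L^4_{t,z}}\bigl(\|\qT^{(n)}\|_{L^4_{t,z}}^2+\|\qT\|_{L^4_{t,z}}^2\bigr)\to 0,
\]
i.e.\ strong $L^4$ convergence. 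The paper supplies this missing ingredient as a separate lemma (Lemma~\ref{l:l4-lip}): using that each approximant is a genuine (Schwartz, via Beals--Coifman) solution, one applies Strichartz estimates to the \emph{difference} $\qT^{(n)}-\qT^{(m)}$ on subintervals $I_j$ chosen so that $\|\qT^{(n)}\|_{L^4(I_j)}\ll 1$; the a priori global $L^4$ bound controls the number of such intervals by $C(\|\qT_0\|_{L^2})$, and iterating over them yields
\[
\|\qT^{(n)}-\qT^{(m)}\|_{L^4_{t,z}}\le C(\|\qT_0\|_{L^2})\|\qT_0^{(n)}-\qT_0^{(m)}\|_{L^2}.
\]
This $L^4$-Lipschitz step is the substantive dispersive input that your sketch omits. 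Your closing remark that ``preservation of $L^2$ mass extends local solutions globally'' is also misleading for a mass-critical problem: it is precisely the global $L^4$ bound (not mass conservation alone) that drives the subdivision argument.
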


We remark that much of the conclusion of this theorem closely
resembles the conclusion of Dodson's result \cite{D16} for the two
dimensional cubic defocusing NLS problem
\[
i u_t + \Delta u = u|u|^2,
\]
(see also the prior work \cite{KTV09}). Written in a similar format, the DSII problem
has the form
\[
i q_t + (\partial_{1}^2 -\partial_2^2) q = q L (|q|^2), \qquad
L(D) = \frac{D_1^2 -D_2^2}{D_1^2+D_2^2}.
\]
Whereas the small data theory for the two problems is completely
similar from a dispersive stand-point (i.e. perturbative, based
on Strichartz estimates), the large-data approach in the present,
completely integrable case and in Dodson's work are completely
different. The large data problem for the other, non-integrable
cases in the same DS family remains open at present. This includes
for instance the problem
\[
i q_t + (\partial_{1}^2 -\partial_2^2) q = q  |q|^2.
\]

The next theorem provides one more convincing motivation for the
study of the Scattering Transform, if one seeks to understand the
large-time behaviour of the solutions to the DSII equation. We
first recall the definition of the wave operators, in the sense of
nonlinear scattering theory.
\begin{definition} Let $\qT_0\in L^2(\R^2)$ and let $\qT(t,z)$ be the
solution to the Cauchy problem (\ref{DSIIdefoc}). Define $W_+\qT_0
= \qT_+$ if there exists a unique $\qT_+\in L^2(\R^2)$ such that
\begin{align*}
\lim_{t\rightarrow\infty}\|\qT(t,\cdot)-U(t)\qT_+\|_{L^2(\R^2)}=0.
\end{align*}
Similarly $W_-\qT_0 = \qT_-$ if
\begin{align*}
\lim_{t\rightarrow-\infty}\|\qT(t,\cdot)-U(t)\qT_-\|_{L^2(\R^2)}=0.
\end{align*}
\end{definition}
We can now state the following further consequence of the
Plancherel theorem:

\begin{theorem}\label{ThmScattering} (Wave operators and asymptotic completeness for defocusing
DSII)\\
\noindent a) The Wave operators $W_\pm$ for the defocusing DSII
equation are well defined on every $\qT_0\in L^2(\R^2)$ and
\begin{align*}
W_\pm\qT_0 = \ol{\widehat{\mathcal S \qT_0}}.
\end{align*}
\noindent b) The Wave operators $W_\pm$ are surjective, in fact
norm-preserving diffeomorphisms of $L^2$.
\end{theorem}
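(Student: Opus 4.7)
Set $\Phi q_0 := \overline{\widehat{\mathcal S q_0}}$. The strategy is: (i) establish existence of $W_\pm$ using the Strichartz bound from Theorem~\ref{ThmDSII}; (ii) identify $W_\pm q_0 = \Phi q_0$ first for Schwartz $q_0$ via classical inverse scattering, then in $L^2$ by density and stability; (iii) deduce (b) from the resulting explicit factorization of $W_\pm$.

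\emph{Existence of $W_\pm$.} From Duhamel (\ref{Duhamel}),
\[
U(-t)q(t) - q_0 \;=\; i\int_0^t U(-s)\bigl[\,q(s)(\dzb\Ic+\dz\Icb)|q(s)|^2\,\bigr]\,ds .
\]
The global $L^4_{t,z}$ bound of Theorem~\ref{ThmDSII}(2), combined with the dual Strichartz estimate for the hyperbolic Schr\"odinger group $U(t)$ in two dimensions (admissible pair $(4,4)$) and $L^p$-boundedness of the zeroth-order operator $\dzb\Ic+\dz\Icb$, yields
\[
\|U(-t_2)q(t_2)-U(-t_1)q(t_1)\|_{L^2} \;\lesssim\; \|q\|_{L^4_{t,z}([t_1,t_2]\times\C)}^3,
\]
which tends to $0$ as $t_1,t_2\to +\infty$. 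Hence $U(-t)q(t)$ is Cauchy in $L^2$ and $W_+ q_0\in L^2$ is well-defined; $W_- q_0$ is constructed identically by taking $t\to -\infty$.

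\emph{Identification $W_\pm q_0 = \Phi q_0$.} A direct calculation using $\widehat{U(t)f}(k) = e^{-2it(k^2+\bar k^2)}\widehat f(k)$ together with the involution $\widehat{\widehat{f}}(z)=-f(-z)$ (both immediate in the normalization (\ref{Four})) gives
\[
U(t)\Phi q_0(z) \;=\; \overline{\widehat{\tT(t,\cdot)}(z)},\qquad \tT(t,k):=e^{2it(k^2+\bar k^2)}\mathcal S q_0(k).
\]
Since $q(t,z)=\mathcal I\tT(t,\cdot)(z)$ and, by (\ref{S_Four}) applied via $\mathcal I=\mathcal S$, the right-hand side above is precisely the linearization of $\mathcal I$ at $\tT(t,\cdot)$, the identification reduces to
\[
\bigl\|\,\mathcal I\tT(t,\cdot) - \overline{\widehat{\tT(t,\cdot)}}\,\bigr\|_{L^2(\C)} \;\to\; 0, \qquad t\to +\infty.
\]
For Schwartz $q_0$, the scattering data $\tT_0$ is Schwartz by Beals--Coifman, and this convergence follows from classical inverse-scattering analysis (Beals--Coifman, Sung, Perry): the CGO correction $n_\pm - 1$ in (\ref{ItT}) decays in $L^2_z$ because of rapid oscillation of the phase $e^{-i(zk+\bar z\bar k)+2it(k^2+\bar k^2)}$. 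For arbitrary $q_0\in L^2$, choose $q_0^{(n)}\in\mathcal S(\C)$ with $q_0^{(n)}\to q_0$ in $L^2$; Theorem~\ref{ThmDSII}(4) yields $\sup_t\|q(t)-q^{(n)}(t)\|_{L^2}\le C(\|q_0\|_{L^2})\|q_0-q_0^{(n)}\|_{L^2}$, while Theorem~\ref{ThmScatteringTransform}(3) together with Plancherel for the Fourier transform gives $\|\Phi q_0-\Phi q_0^{(n)}\|_{L^2}\le C(\|q_0\|_{L^2})\|q_0-q_0^{(n)}\|_{L^2}$. Passing to the limit in the Schwartz identification completes the argument. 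The Schwartz case is the technical heart and constitutes the main obstacle; the $L^2$ extension is routine once the quantitative Lipschitz bounds of Theorems~\ref{ThmDSII} and \ref{ThmScatteringTransform} are in hand.

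\emph{Part (b).} The formula from (a) realizes $W_\pm$ as the composition of $\mathcal S$, the Fourier transform, and complex conjugation. Each factor is a norm-preserving $C^1$ diffeomorphism of $L^2(\C)$: $\mathcal S$ by Theorem~\ref{ThmScatteringTransform} (items 1, 3, 4, 5), the Fourier transform by classical Plancherel (the normalization (\ref{Four}) is an $L^2$-isometry), and complex conjugation trivially. Consequently $W_\pm$ is itself a norm-preserving $C^1$ diffeomorphism of $L^2(\C)$, and in particular surjective.
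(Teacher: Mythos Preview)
Your argument is correct and follows essentially the same route as the paper: existence of $W_\pm$ via Duhamel and the global $L^4_{t,z}$ bound, identification with $\overline{\widehat{\mathcal S q_0}}$ first for Schwartz data and then by density using the Lipschitz bounds, and part (b) by factorization through $\mathcal S$, the Fourier transform, and conjugation. The one notable difference is that where you defer the Schwartz case to the Beals--Coifman literature with a heuristic remark about oscillation, the paper supplies a self-contained argument: it invokes the refined bound \eqref{tT-point++} with the roles of $q$ and $\tT$ interchanged, reducing the matter to $\|\Icbk(e_z e^{it(k^2+\bar k^2)}\tT_0)\|_{L^4}\to 0$, and then verifies this by a direct estimate giving $O(t^{-1/4})$ decay.
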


Perry \cite{Per} established the same large-time asymptotic
behaviour in the $L^\infty$ norm, for initial data in $H^{1,1}
\cap L^1$. Kiselev (\cite{Kis97}, \cite{Kis04}) had similar
results under more restrictive assumptions.

An interesting consequence of Theorem \ref{ThmScattering} is that
the temporal scattering operator $W_+(W_-)^{-1}$ for the
defocusing DSII equation (i.e. the operator which sends $q_-$ to
$q_+$) is equal to the identity.

\bigskip

\subsection{Application to Inverse Boundary Value Problems}
We next discuss the application of our Plancherel theorem to the
Inverse Boundary Value Problem of Calder\'on. Let $\Omega$ be a bounded 
simply connected domain in $\R^2\simeq\C$ with $C^{1,1}$ boundary
$\partial\Omega$. We denote by $\nu$ the outer unit normal to
$\partial\Omega$ and $\tau$ the unit tangent in the counter-clockwise direction.  Consider the Dirichlet problem
\begin{align} \label{bvp}
\begin{cases}
&\nabla\cdot(\sigma\nabla u)=0 \text{ in }\Omega\\
&u\Big|_{\partial\Omega} = g.
\end{cases}
\end{align}
The Dirichlet-to-Neumann map is defined as
\begin{align}
\Lambda_\sigma g:=\sigma \frac{\partial
u}{\partial\nu}\Big|_{\partial\Omega},
\end{align}
with $u$ the solution to (\ref{bvp}). The function $\sigma$ models
the inhomogeneous conductivity of $\Omega$, and $\Lambda_\sigma$
represents the information observable by voltage and current
measurements at the boundary. Calder\'on posed the problem of
establishing whether $\sigma$ is uniquely determined by
$\Lambda_\sigma$ and, if so, of finding a way to calculate
$\sigma$ from knowledge of $\Lambda_\sigma$.

There is by now an extensive literature on this and related
problems. See for instance \cite{ALP} for a recent review. We only
briefly recall some of the pertinent results. The first global
uniqueness theorem was proved by Sylvester-Uhlmann \cite{SU} for
smooth conductivities in dimensions 3 or higher. A reconstruction
method was given in \cite{Nac88}. In three dimensions or higher,
uniqueness has been shown for Lipschitz conductivities close to
the identity in \cite{HT}; the smallness condition was removed in
\cite{CR}. In dimensions $n=3,4$ Haberman \cite{Hab} has proved
uniqueness for conductivities in $W^{1,n}(\Omega)$.

In two dimensions, the first global uniqueness and reconstruction
result was obtained in \cite{Nac} for conductivities in
$W^{2,p}(\Omega)$ with $p>1$, by connecting $\Lambda_\sigma$ to a
scattering transform for a Schr\"odinger equation. This was
refined to $W^{1,p}(\Omega)$ with $p>2$ in \cite{BU} using the
scattering transform studied in this paper. In \cite{AP}, Astala
and P\"aiv\"arinta succeeded in proving uniqueness for general
$L^\infty$ conductivities bounded below. In \cite{ALP}, uniqueness
is extended to a larger class of conductivities that allow some
$\sigma$ which need not be bounded from above or below. In a
recent paper, C\^{a}rstea and Wang \cite{CW} have shown uniqueness for
conductivities $\sigma$ in $W^{1,2}(\Omega)$ which are bounded
from below assuming $\|\nabla\log\sigma\|_{L^2}$ sufficiently
small.

Here we use Theorem \ref{ThmScatteringTransform} to prove global
uniqueness for conductivities $\sigma>0$ a.e. with the property
that
\begin{equation}\label{sigma}
\log \sigma \in \dot H^1(\Omega),  \qquad \sigma = 1 \text{ on
}\partial \Omega.
\end{equation}
This is in line with the sharpest results known in higher
dimensions, mentioned above (\cite{Hab}). Notably we do not assume
any $L^\infty$ type bounds on $\sigma$ from above or below \footnote{However, for any $1 \leq p < \infty$, we do have $\sigma$ and $\sigma^{-1}$ in $L^p$, as follows from the Poincar\'e inequality applied to $\log\sigma$ and Theorem 7.21 in \cite{GT}.}.
As examples, consider the conductivities $\sigma_\alpha(x) = (-\log|x|)^\alpha$, for any $\alpha\in \R$, on the domain $\Omega = \{x: |x|<e^{-1}\}$. We have $|\nabla\log \sigma_\alpha(x)|=-|\alpha|/|x|\log |x|\in L^2(\Omega)$, hence $\log \sigma_\alpha\in \dot H^1(\Omega)$. For $\alpha<0$ these conductivities degenerate at the origin, while for $\alpha>0$ they are  unbounded; as well, for large $\alpha$ they are not covered by the uniqueness results in \cite{ALP} (Theorem 1.9) or \cite{CW}. We first need to make sure that the Dirichlet
problem (\ref{bvp}) is solvable.

\begin{theorem}\label{t:bvp-solve}
  Assume that $\sigma$ is as in \eqref{sigma}.  Then for every $g \in
  H^1(\partial\Omega)$ there exists a unique solution $u$ to the Dirichlet
  problem \eqref{bvp} with $ \sigma^\frac12 \nabla u \in H^\frac12(\Omega)$. Furthermore,
  $\partial u /\partial \nu \in L^2(\partial \Omega)$.
\end{theorem}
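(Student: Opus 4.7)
The plan is to first establish existence and uniqueness via the direct method of the calculus of variations, and then derive the claimed $H^{1/2}$ regularity by showing that the renormalized gradient $W=\sigma^{1/2}(\partial_1 u - i\partial_2 u)=2e^{\phi}\partial_z u$ satisfies a Beals-Coifman-type equation to which Theorem~\ref{ThmMain} applies.

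Set $\phi=\tfrac12\log\sigma$, so that $\phi\in\dot H^1(\Omega)$ with $\phi|_{\partial\Omega}=0$; extending by zero gives $\phi\in H^1(\R^2)$ with compact support, and a Trudinger--Moser inequality yields $e^{\pm\phi}\in L^p$ for every $p<\infty$. For existence, let $G$ be the harmonic extension of $g$; since $g\in H^1(\partial\Omega)$ and $\partial\Omega$ is $C^{1,1}$, one has $G\in H^{3/2}(\Omega)$, hence $\nabla G\in L^4(\Omega)$, and H\"older gives $\sigma^{1/2}\nabla G\in L^2(\Omega)$. Minimize $E(u)=\int_\Omega\sigma|\nabla u|^2\,dx$ over $G+\mathcal A_0$, where $\mathcal A_0$ is the completion of $C_c^\infty(\Omega)$ in the norm $\|\sigma^{1/2}\nabla\cdot\|_{L^2}$. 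The direct method produces a unique minimizer $u$; since $\sigma^{-1/2}\in L^p$ for all $p<\infty$, $\mathcal A_0\hookrightarrow W^{1,p}_0(\Omega)$ for every $p<2$, so the boundary trace of $u$ is $g$, and the Euler--Lagrange equation reads $\nabla\cdot(\sigma\nabla u)=0$ in $\mathcal D'(\Omega)$.

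For the regularity, a direct calculation starting from $\Delta u=-2\nabla\phi\cdot\nabla u$ (the PDE in Laplacian form) produces the Beals-Coifman identity
\[
\partial_{\bar z}W+(\partial_z\phi)\,\overline{W}=0 \qquad\text{in }\Omega,
\]
which is the kernel equation for $L_q$ with $q=\partial_z\phi\in L^2(\R^2)$. Extending $u$ to $\R^2\setminus\overline\Omega$ by a decaying harmonic extension of $g$ and setting $\sigma=1$ there, the globally defined $W$ satisfies $L_q W=-\tfrac12\lambda\,\delta_{\partial\Omega}$ in $\mathcal D'(\R^2)$, where $\lambda=\partial u/\partial\nu|_{\text{int}}-\partial u/\partial\nu|_{\text{ext}}$ is the jump of the normal derivative across $\partial\Omega$. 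Once $\lambda$ is shown to embed continuously into $\dot H^{-1/2}(\R^2)$ via the appropriate trace duality, Theorem~\ref{ThmMain} places $W\in\dot H^{1/2}(\R^2)$, hence $\sigma^{1/2}\nabla u\in H^{1/2}(\Omega)$. Since $\sigma=1$ on $\partial\Omega$, reading off the boundary normal component gives $\partial u/\partial\nu\in L^2(\partial\Omega)$.

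The main obstacle is the circularity: the needed regularity of $\lambda$ is essentially what we wish to prove. I would break the circle by approximation. Set $\sigma_n=\min\{\max\{\sigma,1/n\},n\}$ and $\phi_n=\tfrac12\log\sigma_n$; for each $n$, $\sigma_n$ is bounded above and below, classical theory delivers $u_n\in H^2_{\text{loc}}$ with $\partial u_n/\partial\nu|_{\partial\Omega}\in L^2$, and the global construction above is unconditionally rigorous. The decisive feature of Theorem~\ref{ThmMain} is that the bound on $W_n$ in $\dot H^{1/2}(\R^2)$ depends only on $\|q_n\|_{L^2}=\|\partial_z\phi_n\|_{L^2}\leq\|\partial_z\phi\|_{L^2}$, hence is uniform in $n$. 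Combined with the uniform variational bound $\|\sqrt{\sigma_n}\nabla u_n\|_{L^2}\leq C$, which controls $\lambda_n$ uniformly in $L^2(\partial\Omega)$ through the global equation for $W_n$, one passes to the limit: $W_n\rightharpoonup W$ weakly in $\dot H^{1/2}$, and uniqueness in both the variational and the Beals-Coifman formulations identifies the limit with $\sigma^{1/2}(\partial_1 u-i\partial_2 u)$ for the solution $u$, delivering the $H^{1/2}$ regularity and the $L^2(\partial\Omega)$ bound on $\partial u/\partial\nu$.
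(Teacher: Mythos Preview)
Your central observation---that $v=\sigma^{1/2}\partial u$ satisfies the pseudo-analytic equation $\bar\partial v - q\bar v = 0$ in $\Omega$ with $q=-\tfrac12\partial\log\sigma$---is exactly the reduction the paper uses. From there, however, the routes diverge. The paper does \emph{not} invoke Theorem~\ref{ThmMain} at all for this solvability result. Instead it first proves Theorem~\ref{dbar-solve}: the boundary value problem $\bar\partial v - q\bar v = 0$ in $\Omega$, $\Im(\nu v)=g_0$ on $\partial\Omega$, is solved directly in $H^{1/2}(\Omega)$ by a Fredholm argument (the map $v\mapsto T(q\bar v)$ is compact on $H^{1/2}(\Omega)$, and uniqueness comes from the Vekua gauge trick). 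Theorem~\ref{t:bvp-solve} is then a two-line corollary: set $g_0=-\tfrac12\partial_\tau g$, solve for $v$, and read off $u$ and $\partial u/\partial\nu=2\Re(\nu v)\in L^2(\partial\Omega)$. This local Fredholm route avoids any extension to $\R^2$ and any appeal to the uniform bound $C(\|q\|_{L^2})$.

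Your global-extension approach, by contrast, has a genuine gap in the approximation step. You assert that the variational bound $\|\sigma_n^{1/2}\nabla u_n\|_{L^2}\le C$ ``controls $\lambda_n$ uniformly in $L^2(\partial\Omega)$ through the global equation for $W_n$,'' but it does not. From $W_n\in L^2(\R^2)$ and $q_n\in L^2$ one only gets $\bar\partial W_n\in H^{-1}$ and $q_n\bar W_n\in L^1$, so the equation $L_{q_n}W_n=-\tfrac12\lambda_n\delta_{\partial\Omega}$ places $\lambda_n$ at best in $H^{-1/2}(\partial\Omega)$, not in $L^2(\partial\Omega)$. Without a uniform $L^2(\partial\Omega)$ bound on $\lambda_n$ you cannot feed the right-hand side into Theorem~\ref{ThmMain} and extract a uniform $\dot H^{1/2}$ bound on $W_n$; the argument is still circular. (A smaller issue: merely truncating $\sigma$ to $[1/n,n]$ leaves $\sigma_n$ only in $H^1\cap L^\infty$, which does not give $u_n\in H^2_{\mathrm{loc}}$; you would need to mollify as well, but that does not resolve the main gap.) The paper's Fredholm argument in $\Omega$ sidesteps all of this by never needing to quantify the boundary data in terms of $\|q\|_{L^2}$ alone.
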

In particular this insures that $\Lambda_\sigma$ is a well-defined operator
\[
\Lambda_\sigma: H^1(\partial \Omega) \to L^2(\partial \Omega).
\]
Now we can state our main result on the  Calder\'on problem:

\begin{theorem} \label{reconst}
Assume the conductivity $\sigma>0$ is such that $\log\sigma \in
\dot H^1$. We also assume, for simplicity, that $\sigma=1$ on
$\partial\Omega$. Then we can reconstruct $\sigma$ from knowledge
of $\Lambda_\sigma$.
\end{theorem}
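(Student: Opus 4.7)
The plan is to reduce the inverse boundary value problem to an inverse scattering problem for the Scattering Transform of Theorem \ref{ThmScatteringTransform}, following the framework of Nachman \cite{Nac} and Brown--Uhlmann \cite{BU}, but now carried out at the $L^2$ regularity level made accessible by our Plancherel theorem. First, I would translate the conductivity equation into the first-order $\dzb$ system underlying \eqref{eq_n}: setting $\qT := -\dzb \log \sigma^{1/2}$, the hypothesis \eqref{sigma} together with the normalization $\sigma = 1$ on $\partial\Omega$ allows $\log\sigma$ to be extended by $0$ outside $\Omega$ while preserving the $\dot H^1(\R^2)$ regularity, so $\qT \in L^2(\R^2)$ with $\mathrm{supp}\,\qT \subset \ol\Omega$. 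The standard Brown--Uhlmann reduction then puts solutions of $\nabla\cdot(\sigma\nabla u) = 0$ in correspondence with solutions of a first-order system built from $L_\qT$, and the complex geometrical optics solutions $m_\pm(z,k)$ normalized by $m_\pm \to 1$ as $|z|\to\infty$ satisfy \eqref{eq_n}; Theorem \ref{ThmMain} constructs these $m_\pm$ with estimates depending only on $\|\qT\|_{L^2}$.

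Next, since $\qT$ is compactly supported, knowledge of $\Lambda_\sigma$ should determine the boundary traces $m_\pm(\cdot,k)\big|_{\partial\Omega}$. Following Nachman, I would derive a Faddeev-type boundary integral equation of the form
\begin{align*}
m_\pm(\cdot,k)\big|_{\partial\Omega} = 1 + S_k(\Lambda_\sigma - \Lambda_1)\,m_\pm(\cdot,k)\big|_{\partial\Omega},
\end{align*}
where $\Lambda_1$ is the Dirichlet-to-Neumann map for $\sigma \equiv 1$ and $S_k$ is the single-layer operator built from the Faddeev Green's function for $L_\qT$. The operator $\Lambda_\sigma - \Lambda_1$ is controlled using Theorem \ref{t:bvp-solve}, and $S_k$ is analyzed via Theorem \ref{ThmMain}. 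Once $m_\pm\big|_{\partial\Omega}$ is extracted, a Green's identity recasts the volume integral in \eqref{tT} as a boundary integral in $m_\pm\big|_{\partial\Omega}$ and $\Lambda_\sigma$ alone, so the scattering datum $\mathcal S\qT(k)$ is computable from $\Lambda_\sigma$ for (almost) every $k$.

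At this point Theorem \ref{ThmScatteringTransform} takes over: the $L^2$ function $\mathcal S\qT$ is now reconstructible from $\Lambda_\sigma$, and the inversion $\mathcal S^{-1} = \mathcal S$ recovers $\qT = -\dzb \log \sigma^{1/2}$. Solving the Cauchy--Riemann problem $\dzb f = \qT$ on $\R^2$ with $f \to 0$ at infinity, together with the boundary normalization $\sigma = 1$ on $\partial\Omega$, reconstructs $\log \sigma$ and hence $\sigma$. The main obstacle will be the solvability of the boundary integral equation at every $k$, i.e.\ the absence of \emph{exceptional points}. In smoother regimes \cite{Nac}, this rests on a positivity/maximum-principle argument via a Schr\"odinger reformulation that is unavailable at $L^2$ regularity; instead one must lean directly on the $\|\qT\|_{L^2}$-uniform invertibility of $L_\qT$ provided by Theorem \ref{ThmMain}, and run a Fredholm argument to rule out exceptional points for a.e.\ $k$, which is sufficient since $\mathcal S\qT$ is itself only an a.e.-defined $L^2$ function.
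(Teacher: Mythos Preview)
Your overall strategy is correct and matches the paper's at the coarse level: extend $q$ by zero to $L^2(\R^2)$, recover the boundary traces of the Jost solutions from the boundary data, convert the defining volume integral for $\tT(k)$ into a boundary integral via Green's theorem, and then invert with Theorem~\ref{ThmScatteringTransform}. The paper, however, organizes the boundary step differently. Rather than working directly with $\Lambda_\sigma$ and a Faddeev single-layer equation, it first passes to the auxiliary $\dbar$ boundary value problem \eqref{dbar-model} via $v=\sigma^{1/2}\partial u$, introduces the associated Hilbert transform $\Hq:\Im(\nu v)\mapsto\Re(\nu v)$, and shows (Theorem~\ref{dbar-inverse}) that $\Hq$ determines $q$. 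The trace $\psi_+(\cdot,k)|_{\partial\Omega}$ is then characterized as the unique solution of an \emph{exterior} holomorphic problem coupled to the interior only through the boundary condition $\Re(\nu\psi_+)=\Hq(\Im(\nu\psi_+))$; uniqueness comes straight from Lemma~\ref{perturb} (the $L^4$ uniqueness for $L_q$), which cleanly replaces the ``no exceptional points'' argument you anticipate. Theorem~\ref{reconst} itself then reduces to a two-line corollary: $\Lambda_\sigma\Rightarrow\Hq\Rightarrow q\Rightarrow\sigma$.

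Two small corrections to your write-up. First, with the paper's conventions the potential is $q=-\tfrac12\partial\log\sigma=-\partial\log\sigma^{1/2}$, not $-\dzb\log\sigma^{1/2}$. Second, the displayed boundary integral equation you wrote is the Schr\"odinger/Nachman form; for the first-order system in \eqref{eq_n} the analogous identity has a different structure (and the paper sidesteps this altogether via the exterior problem). Your approach could in principle be made to work, but it would require setting up and analyzing the Faddeev-type layer potentials for $L_q$ at $L^2$ regularity, whereas the paper's $\Hq$ route avoids that machinery entirely and pins the uniqueness step directly on Theorem~\ref{ThmMain}/Lemma~\ref{perturb}.
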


We will obtain Theorems \ref{t:bvp-solve} and \ref{reconst} as
consequences of corresponding results for pseudo-analytic
functions, which are also of interest. 

More precisely, a standard computation shows that if $u$ is a real-valued solution for $(\ref{bvp})$ then the function $v = \sigma^\frac12 \partial u$ solves the equation
\begin{align}\label{dbar-model-eq}
\dbar v -  q \bar v = 0  \text{ in } \Omega
\end{align}
with
\begin{equation}\label{q-def}
q = -\frac12 \partial \log \sigma.
\end{equation}
Moreover, on the boundary $\partial\Omega$ we have (using (\ref{dzbdz}) and the assumption  $\sigma=1$ on
$\partial\Omega$):
\begin{align}\label{du_dnu}
\frac{\partial u}{\partial \nu} = 2 \Re(\nu\partial u) = 2\Re (\nu v)
\end{align}
and
\begin{align}\label{du_dtau}
\frac{\partial u}{\partial \tau} = -2 \Im(\nu\partial u) = -2\Im (\nu v),
\end{align}
where we interpret the outer normal also as a complex-valued function $\nu = \nu_1 + i\nu_2$ on $\partial\Omega$. In particular, given $g = u|_{\partial\Omega}$ we can determine $\Im (\nu v) = -\half \frac{\partial g}{\partial \tau}$ on $\partial \Omega$. We are thus led to study the following boundary value problem of pseudo-analytic function $v$: 

\begin{align}\label{dbar-model}
\begin{cases}
\dbar v -  q \bar v = 0  & \text{in } \Omega\\
\Im (\nu v) =  g_0  & \text{on } \partial \Omega,
\end{cases}
\end{align}
for $g_0\in L^2(\partial\Omega)$ with integral zero, and to define an associated Hilbert Transform type operator on $\partial \Omega$ as:
\begin{equation}
\Hq g_0 :=   \Re (\nu v).
\end{equation}
For then, in view of (\ref{du_dnu}) and (\ref{du_dtau}), we will  have the following relation:
\begin{align}\label{DtN_H}
\Lambda_\sigma = -\mathcal H_\qT\frac{\partial}{\partial \tau},
\end{align}
so that the Dirichlet-to-Neumann map $\Lambda_\sigma$ for $(\ref{bvp})$ will determine the boundary operator $\mathcal H_\qT$ for (\ref{dbar-model}). In turn, $\mathcal H_\qT$ will be shown to determine the Scattering Transform of $\qT$ (extended to be zero outside $\Omega$) thus allowing the use of Theorem \ref{ThmScatteringTransform} (5) to complete the solution of the inverse problem. We will first prove the result on the solvability of the forward problem (\ref{dbar-model}).

\begin{theorem}\label{dbar-solve}
Assume that $q \in L^2$ is given by \eqref{q-def} with $\sigma$ as
in \eqref{sigma}. Then for each real-valued $g_0 \in L^2(\partial
\Omega)$ with integral zero the problem \eqref{dbar-model} admits
a unique solution  $v \in H^{\frac12}(\Omega)$. Furthermore,  $v
\in L^2(\partial \Omega)$ and
\begin{equation}
\| v \|_{  H^{\frac12}(\Omega)} + \| v\|_{L^2(\partial \Omega) }
\leq C(q)  \| g_0\|_{L^2(\partial \Omega)}.
\end{equation}
\end{theorem}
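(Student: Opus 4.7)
The plan is to reduce the boundary value problem on $\Omega$ to the whole-plane inverse of $L_{-q}$ produced by Theorem~\ref{ThmMain}. The hypotheses $\log\sigma\in\dot H^1(\Omega)$ and $\sigma=1$ on $\partial\Omega$ let us extend $\sigma$ by $1$ outside $\Omega$, giving $\log\tilde\sigma\in\dot H^1(\R^2)$ and a potential $\tilde q = -\tfrac12\partial\log\tilde\sigma \in L^2(\R^2)$ that agrees with $q$ on $\Omega$ and vanishes outside.  Applying Theorem~\ref{ThmMain} to $-\tilde q$ yields a bounded inverse $G: \dot H^{-\frac12}(\R^2)\to\dot H^{\frac12}(\R^2)$ of $L_{-\tilde q}$ whose norm depends only on $\|q\|_{L^2}$.

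Next I seek $v = V|_\Omega$ where $V\in\dot H^{\frac12}(\R^2)$ satisfies $L_{-\tilde q}V = 0$ off $\partial\Omega$; since $\tilde q$ vanishes outside $\Omega$, such a $V$ is automatically holomorphic there, while inside $\Omega$ it solves the target pseudo-analytic equation.  The distributional jump formula $\dbar V = (\dbar V)_{\text{pointwise}} + \tfrac12 \nu\,[V]\,\delta_{\partial\Omega}$ forces $L_{-\tilde q}V$ to reduce to a single-layer distribution on $\partial\Omega$, so for a boundary density $\rho$ I set $V_\rho = G(\rho\,\delta_{\partial\Omega})$, defined first for smooth $\rho$ and extended to $\rho \in L^2(\partial\Omega)$ by a Hardy-type density argument.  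The holomorphic/pseudo-analytic structure on each side of $\partial\Omega$ endows $V_\rho$ with genuine $L^2(\partial\Omega)$ boundary traces $v_\pm$ bounded by $C(q)\|\rho\|_{L^2(\partial\Omega)}$.  The boundary-to-boundary map $\mathcal B: \rho\mapsto\Im(\nu v_-)|_{\partial\Omega}$ is then bounded on the mean-zero subspace of $L^2(\partial\Omega)$ (this matches the hypothesis $\int_{\partial\Omega} g_0\,ds = 0$); for $q=0$ it is the imaginary part of the classical Cauchy integral and invertible, and for general $q\in L^2$ the correction is compact by Rellich, so the Fredholm alternative reduces existence and the quantitative bound to injectivity of $\mathcal B$.

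Injectivity follows from a direct uniqueness argument exploiting the special form $q = -\tfrac12\partial\log\sigma$.  Given $v\in H^{\frac12}(\Omega)$ with $\dbar v - q\bar v = 0$ and $\Im(\nu v)=0$ on $\partial\Omega$, the real one-form $\omega = \sigma^{-\frac12} v\,dz + \sigma^{-\frac12}\bar v\,d\bar z$ is closed, a short calculation showing that its closure is precisely equivalent to $\dbar v - q\bar v = 0$.  Since $\Omega$ is simply connected, $\omega$ admits a single-valued real primitive $u$ with $\partial u = \sigma^{-\frac12} v$.  A direct computation then gives $\nabla\cdot(\sigma\nabla u) = 2\dbar(\sigma\partial u) + 2\partial(\sigma\dbar u) = 0$, and on $\partial\Omega$, where $\sigma=1$ and hence $v = \partial u$, the identity $\Im(\nu\partial u) = -\tfrac12 \partial_\tau u$ turns the boundary condition into $\partial_\tau u=0$, so $u$ is constant on $\partial\Omega$.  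The standard energy identity for the conductivity equation forces $u\equiv$ const, hence $v = \sigma^{\frac12}\partial u = 0$.

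The main obstacle is the borderline trace theory at regularity $\frac12$: a function in $\dot H^{\frac12}(\R^2)$ need not have an $L^2$ trace on $\partial\Omega$, and correspondingly $\rho\,\delta_{\partial\Omega}$ with $\rho\in L^2(\partial\Omega)$ is not literally in $\dot H^{-\frac12}(\R^2)$.  Exploiting the Hardy-space structure inherited from the pseudo-analytic equation inside $\Omega$ and from holomorphy outside is what allows the single-layer potential $V_\rho$ to be constructed, to have genuine $L^2(\partial\Omega)$ boundary values, and to satisfy the norm estimate claimed in the theorem.
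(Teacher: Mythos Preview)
Your plan diverges from the paper's approach and, as you yourself flag, contains an unresolved gap at precisely the critical step.

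The paper never extends to the whole plane or invokes Theorem~\ref{ThmMain}. Instead it works entirely inside $\Omega$: it first solves the inhomogeneous linear problem $\dbar v=f_0$ in $\Omega$ with $\Im(\nu v)=g_0+c$ on $\partial\Omega$, obtaining a bounded operator $T:L^{4/3}(\Omega)\to H^{1/2}(\Omega)$; then for small $q$ it iterates $v=T(q\bar v)+Tf_0+Bg_0$ by a Neumann series, and integrates the equation against $\sigma^{1/2}$ to show the auxiliary constant $c$ vanishes; finally for large $q$ it uses compactness of $v\mapsto T(q\bar v)$ on $H^{1/2}(\Omega)$ and the Fredholm alternative, proving uniqueness by a gauge transformation $v\mapsto e^\phi v$ that removes the smooth part of $q$ and reduces to the small-$q$ case. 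Everything stays in spaces where the maps are honestly bounded.

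Your route, by contrast, asks for $G(\rho\,\delta_{\partial\Omega})$ with $G=L_{-\tilde q}^{-1}:\dot H^{-1/2}\to\dot H^{1/2}$, but $\rho\,\delta_{\partial\Omega}$ is \emph{not} in $\dot H^{-1/2}(\R^2)$ even for smooth $\rho$ --- a measure on a curve lies only in $H^{-1/2-\epsilon}$. You say this will be fixed by ``a Hardy-type density argument'' and by ``exploiting the Hardy-space structure,'' but no actual mechanism is given: you would need a substitute for Theorem~\ref{ThmMain} in a space where the single layer lives, and none is available in the paper. Without that, neither the construction of $V_\rho$, nor the $L^2(\partial\Omega)$ trace bounds, nor the compactness of the correction $\mathcal B-\mathcal B|_{q=0}$ are justified. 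This is a genuine gap, not a routine technicality.

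Your uniqueness argument, on the other hand, is correct and arguably more transparent than the paper's gauge trick: writing $v=\sigma^{1/2}\partial u$ and reducing to the energy identity for $\nabla\cdot(\sigma\nabla u)=0$ with constant Dirichlet data is exactly the substitution the paper uses elsewhere (in the proof of Theorem~\ref{t:bvp-solve}), and it cleanly exploits the hypothesis $q=-\tfrac12\partial\log\sigma$. If you want to salvage the overall strategy, the honest fix is to abandon the single-layer ansatz and instead build the interior solution operator directly, as the paper does; your uniqueness step can then replace the paper's Case~4 gauge argument.
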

Thus $\mathcal H_\qT$ is well-defined as a bounded operator on $L^2(\partial\Omega)$:
\begin{equation}
L^2    \ni     \Im (\nu v) = g_0 \to    \Hq g_0 :=   \Re (\nu
v)\in L^2.
\end{equation}
Our main reconstruction theorem for \eqref{dbar-model} states that
one can recover $\qT$ from this boundary operator. One may consider it
as analogous to the result in \cite{AP} where the Hilbert
transform for a Beltrami equation is shown to determine the
corresponding Beltrami coefficient.

\begin{theorem}\label{dbar-inverse}
Assume that $q \in L^2$ is given by \eqref{q-def} with $\sigma$ as
in \eqref{sigma}.  Then we can reconstruct $q$ from knowledge of
$\Hq$.
\end{theorem}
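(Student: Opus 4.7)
The plan is to carry out a Nachman--Brown--Uhlmann style reconstruction, adapted to the low regularity $q \in L^2$ given by \eqref{q-def}. The scheme is: use the boundary operator $\Hq$ to recover the boundary traces on $\partial \Omega$ of the complex geometric optics (CGO) solutions $m_+(\,\cdot\,, k)$ of \eqref{eq_n}; extract the scattering transform $\tT = \mathcal S q$ from those traces via a boundary integral; finally invert $\mathcal S$ using Theorem \ref{ThmScatteringTransform}(5), which gives $q = \mathcal S(\mathcal S q)$.

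First I extend $\sigma \equiv 1$ outside $\Omega$; because $\sigma = 1$ on $\partial \Omega$ and $\log \sigma \in \dot H^1(\Omega)$, the extension satisfies $\log \sigma \in \dot H^1(\C)$ and $q$ is supported in $\bar \Omega$ with $q \in L^2(\C)$. For each $k \in \C$, Theorems \ref{ThmMain} and \ref{ThmScatteringTransform} furnish the CGO solution $m_+(z,k)$ of \eqref{eq_n}, and $v_+(z,k) := e^{izk} m_+(z,k)$ solves $\dbar v_+ = q \bar v_+$ on $\C$. Since $q \equiv 0$ off $\Omega$, $v_+ - e^{izk}$ is holomorphic on $\C \setminus \bar \Omega$ and vanishes at infinity; combined with the interior regularity provided by Theorem \ref{dbar-solve}, this yields a boundary trace $\phi_k := v_+(\,\cdot\,, k)|_{\partial \Omega} \in L^2(\partial \Omega)$. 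The trace $\phi_k$ is characterized by two boundary constraints: \textbf{(interior)} since $v_+$ is pseudo-analytic in $\Omega$, the real and imaginary parts of $\nu \phi_k$ are linked by the operator $\Hq$; \textbf{(exterior)} since $\phi_k - e^{izk}|_{\partial \Omega}$ is the trace of a function holomorphic in $\C \setminus \bar \Omega$ vanishing at infinity, it lies in the kernel of the Cauchy projection onto boundary values of interior-holomorphic functions on $\partial \Omega$.

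Combining these two conditions produces a linear boundary integral equation for $\phi_k$ on $L^2(\partial \Omega)$, with right-hand side depending only on $e^{izk}|_{\partial \Omega}$ and $\Hq$. The principal part is Fredholm of index zero since the $q = 0$ reduction is the classical boundary Hilbert transform; unique solvability is therefore equivalent to triviality of the kernel. A kernel element would extend to a pseudo-analytic solution of $\dbar v = q \bar v$ on all of $\C$ that decays at infinity, or equivalently (via $v = e^{izk} m$) to a nontrivial decaying solution $m - 1$ of \eqref{eq_n} with trivial asymptotic. Such exceptional points are excluded for the defocusing sign by the positivity structure exploited in Theorem \ref{ThmMain}, or equivalently by the injectivity statement of Theorem \ref{ThmScatteringTransform}. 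Hence $\phi_k$, and therefore $m_+|_{\partial \Omega}$, is uniquely determined by $\Hq$; an analogous boundary problem (with the sign flip in \eqref{eq_n}) determines $m_-|_{\partial \Omega}$.

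With $m_\pm|_{\partial \Omega}$ in hand, I convert the defining formula \eqref{tT} for $\mathcal S q(k)$ into a boundary integral: applying Stokes' theorem to $\tfrac{1}{2\pi i}\int_\Omega e_k \,\overline{q}\,(m_+ + m_-)\, dz$ and using $\dbar m_\pm = \pm e_{-k} q \overline{m_\pm}$ together with $e_k e_{-k} = 1$, the volume term reduces to a contour integral over $\partial \Omega$ involving only $m_\pm|_{\partial \Omega}$. Thus $\mathcal S q(k)$ is recoverable from $\Hq$ for every $k \in \C$, and Theorem \ref{ThmScatteringTransform}(5) returns $q = \mathcal S(\mathcal S q)$, completing the reconstruction. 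The principal difficulty is establishing unique solvability of the boundary integral equation uniformly in $k$ at the roughness $q \in L^2$: one must verify the requisite $L^2(\partial \Omega) \to L^2(\partial \Omega)$ mapping properties of $\Hq$ in tandem with the CGO asymptotics $m_+(\,\cdot\,, k) \to 1$ (uniform in $k$ by Theorem \ref{ThmScatteringTransform}), and rule out exceptional points directly from the defocusing sign, neither step being available from the classical Brown--Uhlmann framework, which requires $q \in L^p_c$ for $p > 2$.
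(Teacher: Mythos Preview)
Your proposal is correct and follows the same overall strategy as the paper: extend $q$ by zero, characterize the boundary traces of the CGO solutions via $\Hq$ together with the exterior holomorphy, express $\tT(k)$ as a boundary integral, and invert using $\mathcal S^{-1}=\mathcal S$.

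The one genuine difference is in how the boundary characterization is packaged. The paper formulates it as an \emph{exterior problem} for $\psi_+ = e^{izk} m_+$ on $\C\setminus\ol\Omega$ (holomorphic outside, $e^{-izk}\psi_+-1\in L^4\cap W^{1,4/3}_{loc}$, and $\Re(\nu\psi_+)=\Hq\Im(\nu\psi_+)$ on $\partial\Omega$), and proves uniqueness by gluing a putative second exterior solution to an interior solution from Theorem~\ref{dbar-solve}, obtaining a global $L^4$ solution of $\dbar m = e_{-k}q\bar m$, which must vanish by Lemma~\ref{perturb}. You instead reduce directly to a Fredholm boundary integral equation on $\partial\Omega$ via the Cauchy projection, and establish triviality of the kernel by the same global extension argument. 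These are equivalent; in fact the paper explicitly remarks that ``for computational purposes one can use layer potentials to reduce \ldots\ to a problem on $\partial\Omega$'' but does not pursue it. Your formulation is closer to an actual reconstruction algorithm, while the paper's is slightly cleaner for the uniqueness proof since it avoids having to verify the Fredholm property of the boundary operator. Either way, the decisive input is the absence of exceptional points for $q\in L^2$, which is exactly Lemma~\ref{perturb} (the $L^4$ uniqueness underlying Theorem~\ref{ThmMain}), as you correctly identify.
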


We will in effect consider these last two theorems as the main
ones, with the results for the  Calder\'on problem as
straightforward consequences.
\bigskip

{\bf Acknowledgements:}

The authors would like to thank Alexandru Tamasan for many helpful
discussions at the early stages of investigation.  A. Nachman and
D. Tataru are grateful to IHP for hospitality and support during
the program on Inverse Problems in 2015, which allowed us to
initiate this project. The authors are also grateful to Xian Liao, Peter Perry, Mihai Tohaneanu, Pavel Zorin-Kranich and the anonymous referees for carefully reading the manuscript and helping us correct a number 
of typos and inaccuracies.

D. Tataru was partially supported by the NSF grant DMS-1266182 as
well as by the Simons Investigator grant from the Simons
Foundation. A. Nachman was partially supported by the  NSERC
Discovery Grant RGPIN-06329.

\section{Estimates on Fractional Integrals and
Pseudo-differential Operators}\label{Section:Estimates}

This section is devoted to the proofs of new boundedness theorems
on fractional integrals, pointwise multipliers in negative Besov
spaces and pseudo-differential operators with non-smooth symbols.
These results will be crucial in the rest of the paper. The proofs
in this section are valid in all dimensions.

Recall that the Hardy-Littlewood Maximal function is defined for
locally integrable functions $f:\R^n\rightarrow\C$ as:
\begin{align*}
Mf(x) = \sup_{r>0}\frac{1}{|B(x,r)|}\int_{B(x,r)}|f(y)|dy
\end{align*}
and yields a bounded operator on $L^p$ for $1<p \leq \infty$ (see,
for instance \cite{Ste}). Also recall the mixed $L^p$ norm:
\begin{align*}
\|f\|_{L^q_y L^p_x} = \Big(\int\Big(\int|f(x,y)|^p
dx\Big)^\frac{q}{p} dy\Big)^\frac{1}{q}.
\end{align*}

We have the following pointwise bound on fractional integrals:
\begin{theorem}\label{FractionalIntegral}
For $0<\alpha<n$, $f\in L^p(\R^n)$, $1\leq p\leq 2$
\begin{align*}
&\text{a)}\enspace\enspace\big|(-\Delta)^{-\frac{\alpha}{2}}f(x)\big|\leq
c_{n,\alpha}\Big( \lambda^{n-\alpha}M\hat
f(0)+\lambda^{-\alpha}Mf(x)\Big)\enspace\enspace\text{ for any
$\lambda>0$}\\
&\text{b)}\enspace\enspace\big|(-\Delta)^{-\frac{\alpha}{2}}f(x)\big|\leq
c_{n,\alpha} \Big(M\hat
f(0)\Big)^{\frac{\alpha}{n}}\Big(Mf(x)\Big)^{1-\frac{\alpha}{n}}.
\end{align*}
\end{theorem}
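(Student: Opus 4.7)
The plan is to combine a Littlewood--Paley decomposition with two complementary pointwise bounds on each dyadic piece, and then optimize. Let $\psi \in C_c^\infty(\R^n)$ be supported in the annulus $\{1/2 \leq |k| \leq 2\}$ with $\sum_{j\in\Z}\psi(k/2^j) = 1$ for $k \neq 0$, and let $A_j$ denote the Fourier multiplier with symbol $m_j(k) = |k|^{-\alpha}\psi(k/2^j)$, so that formally
\[
(-\Delta)^{-\alpha/2}f(x) = \sum_{j\in\Z} A_j f(x).
\]

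The key step is to establish two separate pointwise bounds on $|A_j f(x)|$. On the physical side, the symbol rescales as $m_j(k) = 2^{-j\alpha}\tilde m(k/2^j)$ with $\tilde m(k) = |k|^{-\alpha}\psi(k)$ smooth and compactly supported, so the convolution kernel of $A_j$ takes the form $K_j(x) = 2^{j(n-\alpha)}K_0(2^j x)$ with $K_0$ Schwartz. Consequently $|K_j|$ is dominated by a radially decreasing integrable majorant of $L^1$-mass $\sim 2^{-j\alpha}$, and the classical maximal inequality for such convolutions yields $|A_j f(x)| \lesssim 2^{-j\alpha}Mf(x)$. On the Fourier side I pass absolute values inside the inversion integral:
\[
|A_j f(x)| \leq \int |m_j(k)||\hat f(k)|\,dk \lesssim 2^{-j\alpha}\int_{|k|\lesssim 2^j}|\hat f(k)|\,dk,
\]
and the very definition of the Hardy--Littlewood maximal function at the origin, $\int_{B(0,r)}|\hat f(k)|\,dk \leq |B(0,r)|\,M\hat f(0)$ for every $r>0$, gives $|A_j f(x)| \lesssim 2^{j(n-\alpha)}M\hat f(0)$.

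Given $\lambda>0$, I then split the sum at the integer $J$ with $2^J \leq \lambda < 2^{J+1}$, using the Fourier bound on low frequencies $j \leq J$ and the physical bound on high frequencies $j > J$. Since $0<\alpha<n$, both halves are convergent geometric series, summing to $c\lambda^{n-\alpha}M\hat f(0)$ and $c\lambda^{-\alpha}Mf(x)$ respectively, which proves (a). Statement (b) then follows by optimizing over $\lambda$: taking $\lambda^n \sim Mf(x)/M\hat f(0)$ equalizes the two terms and produces the geometric-mean bound $(M\hat f(0))^{\alpha/n}(Mf(x))^{1-\alpha/n}$.

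The main point requiring real care is the Schwartz estimate on the base kernel $K_0$, in which both the smoothness of $\psi$ and its support away from $k=0$ are essential (the support away from zero is what removes the Riesz singularity from $|k|^{-\alpha}\psi$). The hypothesis $f \in L^p$ with $1<p\leq 2$ plays no active role beyond ensuring, via Hausdorff--Young, that $\hat f \in L^{p'}$ is locally integrable so that $M\hat f(0)$ is well-defined; if $M\hat f(0) = \infty$ or $Mf(x) = \infty$ the estimates are vacuously true.
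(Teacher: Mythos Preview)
Your proof is correct and follows essentially the same approach as the paper: a Littlewood--Paley decomposition, the Fourier-side bound $|A_jf(x)|\lesssim 2^{j(n-\alpha)}M\hat f(0)$ for low frequencies, the physical-side bound $|A_jf(x)|\lesssim 2^{-j\alpha}Mf(x)$ for high frequencies, summing the two geometric series and optimizing. The only cosmetic difference is that where the paper derives the kernel decay by hand via integration by parts and then sums over dyadic shells in $|y|$, you package this as the standard fact that convolution with a radially decreasing $L^1$ majorant is controlled by the maximal function; the content is the same.
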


\begin{proof}
First we note that the restriction on $f$ along with the Hausdorff-Young inequality assures that  $f$ and $\hat f$ are locally integrable and so $Mf$ and $M\hat f$ are well defined. To simplify notation, we will use $\lesssim$ in place of $\leq
c_{n,\alpha}$. Using the Littlewood-Paley decomposition, we write
\begin{align}\label{PL}
(-\Delta)^{-\frac{\alpha}{2}}f(x)=\frac{1}{(2\pi)^n}\sum_{j=-\infty}^\infty\int_{\R^n}\psi_j(\xi)\frac{e^{ix\cdot\xi}}{|\xi|^\alpha}\hat
f(\xi)d\xi
\end{align}
with $\psi_j(\xi)=\psi(\xi/2^j)$ supported in
$2^{j-1}<|\xi|<2^{j+1}$. Fix $j_0$, for now. We estimate the terms
in (\ref{PL}) with $j\leq j_0$ using
\begin{align*}
\int_{|\xi|<r}|\hat f(\xi)|d\xi\leq c_n r^n M\hat f(0):
\end{align*}
\begin{align}\label{PL_2}
\sum_{j=-\infty}^{j_0}\int_{\R^n}\frac{\psi_j(\xi)}{|\xi|^\alpha}
|\hat f(\xi)|d\xi&\lesssim \sum_{j=-\infty}^{j_0}2^{-j\alpha}
M\hat
f(0)2^{jn}\\
\nonumber &\lesssim 2^{j_0(n-\alpha)}M\hat f(0),
\end{align}
since $\alpha<n$. We bound the terms in (\ref{PL}) with $j\geq
j_0$ by
\begin{align*}
\sum_{j=j_0}^{\infty}\int_{\R^n}|K_j(y)||f(x-y)|dy,
\end{align*}
with
\begin{align*}
K_j(y)=\frac{1}{(2\pi)^n}\int_{\R^n}\psi_j(\xi)\frac{e^{iy\cdot\xi}}{|\xi|^\alpha}d\xi.
\end{align*}
The integral kernel $K_j$ can be estimated by
\begin{align}\label{K_bound}
|K_j(y)|\lesssim|y|^{-N}2^{j(n-\alpha-N)}
\end{align}
for any integer $N\geq 0$. This estimate is obtained, as usual, by
writing
\begin{align*}
K_j(y)=\frac{1}{(i|y|^2)^N}\frac{1}{(2\pi)^n}\int_{\R^n}\frac{\psi(\xi/2^j)}{|\xi|^\alpha}(y\cdot\nabla_\xi)^N
e^{i y\cdot \xi}d\xi.
\end{align*}
and integrating by parts $N$ times. We write
\begin{align}\label{ineq_1}
\int_{|y|\geq 2^{-j}}|K_j(y)||f(x-y)|dy \nonumber
&=\sum_{l=-j}^{\infty}\int_{2^{l}\leq|y|\leq
2^{l+1}}|K_j(y)||f(x-y)|dy\\
\nonumber
&\lesssim\sum_{l=-j}^{\infty}2^{j(n-\alpha-N)}2^{-lN}\int_{2^{l}\leq|y|\leq
2^{l+1}}|f(x-y)|dy\\
\nonumber
&\text{(using (\ref{K_bound}) with $N>n$)}\\
\nonumber
&\lesssim \sum_{l=-j}^{\infty}2^{j(n-\alpha-N)}2^{-lN}2^{(l+1)n}Mf(x)\\
\nonumber
&\lesssim 2^{j(n-\alpha-N)}2^{n}2^{-j(n-N)}Mf(x)\\
&\lesssim 2^{-j\alpha}Mf(x).
\end{align}
For $|y|<2^{-j}$ we use (\ref{K_bound}) with $N=0$:
\begin{align}\label{ineq_2}
\int_{|y|\leq 2^{-j}}|K_j(y)||f(x-y)|dy \nonumber &\lesssim
2^{j(n-\alpha)}\int_{|y|\leq
2^{-j}}|f(x-y)|dy\\
\nonumber
&\lesssim 2^{j(n-\alpha)}2^{-jn}Mf(x)\\
&= 2^{-j\alpha}Mf(x).
\end{align}
The inequalities (\ref{ineq_1}) and (\ref{ineq_2}) yield
\begin{align*}
\sum_{j=j_0}^{\infty}\int_{\R^n}|K_j(y)||f(x-y)|dy\lesssim
Mf(x)\sum_{j=j_0}^{\infty}2^{-j\alpha}\lesssim 2^{-j_0\alpha}Mf(x).
\end{align*}
Returning to (\ref{PL}) and also using (\ref{PL_2}), we obtain:
\begin{align*}
\big|(-\Delta)^{-\frac{\alpha}{2}}f(x)\big|\lesssim
2^{j_0(n-\alpha)}M\hat f(0)+2^{-j_0\alpha}Mf(x)
\end{align*}
for any $j_0$. This proves inequality a). Inequality b) then
follows by optimizing over $\lambda$.
\end{proof}

We state explicitly the special case of the above in the form
which will be used in subsequent sections. These estimates will allow
us to obtain precise control of $m(\cdot,k)$ and $\tT(k)$ for
large $k$ without any smoothness assumptions on $\qT$.

\begin{corollary}\label{FI_}
For $\qT\in L^2(\C)$
\begin{align*}
\text{a) }&|\Icb(e_{-k}\qT)(x)|\lesssim \Big(M\hat
\qT(k)\Big)^{\half}\Big(M\qT(x)\Big)^\half\\
\text{b) }&\|\Icb(e_{-k}\qT)\|_{L^4}\lesssim
\|\qT\|_{L^2}^\half\Big(M\hat
\qT(k)\Big)^{\half}.\\
\end{align*}
\end{corollary}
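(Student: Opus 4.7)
The plan is to deduce part a) from Theorem \ref{FractionalIntegral} in the case $n=2$, $\alpha=1$, applied to the function $f = e_{-k}\qT$, and then to obtain part b) by raising a) to the fourth power, integrating in $x$, and invoking the $L^2$-boundedness of the Hardy-Littlewood maximal function.

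A preliminary point: Theorem \ref{FractionalIntegral} is stated for $(-\Delta)^{-\alpha/2}$, whereas we need the analogous bound for $\Icb$. However, the Fourier multiplier of $\Icb$ is $c/\bar\zeta$, whose absolute value equals $|\zeta|^{-1}$ and whose derivatives of order $N$ are of size $O(|\zeta|^{-1-N})$ --- precisely the two properties of $|\xi|^{-\alpha}$ invoked in the Littlewood-Paley proof of Theorem \ref{FractionalIntegral} (in particular in the kernel bound \eqref{K_bound}). Consequently that proof goes through verbatim with $\Icb$ in place of $(-\Delta)^{-1/2}$, yielding, for any $g \in L^2$ and any $\lambda>0$,
\[
|\Icb g(x)| \lesssim \lambda\, M\hat g(0) + \lambda^{-1}\, Mg(x),
\]
and optimizing $\lambda$ gives the pointwise inequality $|\Icb g(x)| \lesssim (M\hat g(0))^{1/2}(Mg(x))^{1/2}$.

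Now I specialize to $g = e_{-k}\qT$. Since $|e_{-k}| \equiv 1$, $Mg(x) = M\qT(x)$. For the frequency side, $e_{-k}(z) = e^{-2i(x_1 k_1 - x_2 k_2)}$ is a pure modulation, so the Fourier transform of $g$ is a translate of that of $\qT$; in particular $\hat g(0) = \int e_{-k}\qT$ coincides, up to the harmless constant factor $\pi/i$ in \eqref{Four}, with $\hat\qT(k)$. Translation invariance of the Hardy-Littlewood maximal function (together with the fixed linear change of variables identifying the paper's $k$-plane with the standard dual variable) gives $M\hat g(0) \lesssim M\hat\qT(k)$. Substituting into the pointwise bound above yields part a).

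For part b), raise a) to the fourth power and integrate in $x$:
\[
\|\Icb(e_{-k}\qT)\|_{L^4}^4 \lesssim \bigl(M\hat\qT(k)\bigr)^2 \int (M\qT(x))^2\, dx \lesssim \bigl(M\hat\qT(k)\bigr)^2 \|\qT\|_{L^2}^2,
\]
where the last step is the $L^2$-boundedness of $M$. Taking fourth roots gives b). The only non-routine point in the whole argument is the initial adaptation of Theorem \ref{FractionalIntegral} from $(-\Delta)^{-1/2}$ to $\Icb$, which is immediate from inspection of the proof; everything else is a substitution, an integration, and an appeal to the maximal theorem.
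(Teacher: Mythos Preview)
Your proof is correct and follows exactly the approach the paper intends: part a) is the case $n=2$, $\alpha=1$ of Theorem~\ref{FractionalIntegral} applied to $f=e_{-k}\qT$, and part b) follows by integrating and using the $L^2$ boundedness of $M$. Your remark that the proof of Theorem~\ref{FractionalIntegral} carries over from $(-\Delta)^{-1/2}$ to $\Icb$ because the multiplier $1/\bar\zeta$ satisfies the same symbol bounds is a valid point that the paper leaves implicit.
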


\begin{proof}
Assertion a) follows directly from Theorem \ref{FractionalIntegral} b) with $\alpha=1$, $n=2$ and $p=2$. Assertion b) follows
from assertion a) and the boundedness of $M$ on $L^2$.
\end{proof}

We next use Theorem \ref{FractionalIntegral} to prove $L^2$
boundedness for a class of pseudo-differential operators with
non-smooth symbols (See the monograph \cite{CM} for an extensive
investigation of such problems). The result we need here does not
appear to be available in the literature. It will allow us to show
that the scattering transform is well defined and in $L^2$ as a
function of $k$.

\begin{theorem}\label{ThmPsiDO}
Let $0\leq\alpha<n$. Suppose $a(x,\xi)$ satisfies\footnote{Assuming that $a$ decays in $\xi$ at infinity, the condition (i) follows from (ii) by Sobolev embeddings, but is written separately for reference purposes.}
\begin{align*}
i)\enspace &\int_{\R^n} \int_{\R^n}\big|
a(x,\xi)\big|^{\frac{2n}{n-\alpha}}d\xi
dx<\infty\enspace\enspace\text{ and}\\
ii)\enspace
&\|(-\Delta_\xi)^{\frac{\alpha}{2}}a(x,\xi)\|_{L^{\frac{2n}{n+\alpha}}_\xi}\in
{L^{\frac{2n}{n-\alpha}}_x}.
\end{align*}
Then the pseudo-differential operator
\begin{align}\label{PseudoDiff}
a(x,D)f(x) := \frac{1}{(2\pi)^n}\int_{\R^n}
e^{ix\cdot\xi}a(x,\xi)\hat f(\xi)d\xi
\end{align}
is bounded on $L^2$ with
\begin{align}\label{PseudoDiff_L2}
\|a(x,D)f\|_{L^2} \leq
c_{\alpha,n}\|f\|_{L^2}\|(-\Delta_\xi)^{\frac{\alpha}{2}}a(x,\xi)\|_{L^{\frac{2n}{n-\alpha}}_x
L^{\frac{2n}{n+\alpha}}_\xi }.
\end{align}
Moreover, we have the pointwise bound
\begin{align}\label{PointwiseBound}
|a(x,D)f(x)| \leq
c_{\alpha,n}(Mf(x))^{\alpha/n}\|(-\Delta_\xi)^{\frac{\alpha}{2}}a(x,\cdot)\|_{L^{\frac{2n}{n+\alpha}}}\|f\|^{1-\frac{\alpha}{n}}_{L^2}
\end{align}
for a.e. x.
\end{theorem}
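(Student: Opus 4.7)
The plan is to reduce the desired bounds on $a(x,D)f$ to Theorem \ref{FractionalIntegral}(b) applied in the frequency variable $\xi$, with $x$ treated as a parameter. Set
\[
h_x(\xi) := (-\Delta_\xi)^{\alpha/2} a(x,\xi), \qquad G_x(\xi) := e^{ix\cdot\xi}\hat f(\xi),
\]
so that $h_x \in L^{2n/(n+\alpha)}_\xi$ for a.e.\ $x$ by hypothesis (ii), and $a(x,\xi) = (-\Delta_\xi)^{-\alpha/2} h_x(\xi)$ via the Riesz potential. Moving the fractional Laplacian onto the other factor (for $f$ Schwartz, using self-adjointness via Parseval, extending later by density) recasts the operator as
\[
a(x,D)f(x) \;=\; \frac{1}{(2\pi)^n}\int h_x(\xi)\,\bigl[(-\Delta_\xi)^{-\alpha/2}G_x\bigr](\xi)\,d\xi.
\]

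Next I compute the Fourier transform of $G_x$ in the variable $\xi$: one gets $\mathcal F_\xi G_x(y) = (2\pi)^n f(x-y)$, so $M\bigl[\mathcal F_\xi G_x\bigr](0) \leq (2\pi)^n Mf(x)$. Theorem \ref{FractionalIntegral}(b), applied to $G_x$ in $\xi$, then yields the pointwise-in-$\xi$ bound
\[
\bigl|(-\Delta_\xi)^{-\alpha/2}G_x(\xi)\bigr| \;\lesssim\; (Mf(x))^{\alpha/n}\,\bigl(M|\hat f|(\xi)\bigr)^{1-\alpha/n}.
\]
Hölder in $\xi$ with dual exponents $\frac{2n}{n+\alpha}$ and $\frac{2n}{n-\alpha}$, combined with the fact that $(1-\alpha/n)\cdot\frac{2n}{n-\alpha}=2$, gives
\[
|a(x,D)f(x)| \;\lesssim\; \|h_x\|_{L^{2n/(n+\alpha)}_\xi}\,(Mf(x))^{\alpha/n}\,\|M|\hat f|\|_{L^2_\xi}^{1-\alpha/n}.
\]
Boundedness of $M$ on $L^2$ together with Plancherel controls the last factor by $\|f\|_{L^2}^{1-\alpha/n}$, establishing the pointwise bound \eqref{PointwiseBound}.

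For the $L^2$ bound \eqref{PseudoDiff_L2}, I take the $L^2_x$ norm of the pointwise estimate and apply Hölder in $x$ with exponents $\frac{2n}{n-\alpha}$ and $\frac{2n}{\alpha}$ (whose reciprocals sum to $1/2$), then use the boundedness of $M$ on $L^2_x$:
\[
\|a(x,D)f\|_{L^2_x} \;\lesssim\; \bigl\|\|h_x\|_{L^{2n/(n+\alpha)}_\xi}\bigr\|_{L^{2n/(n-\alpha)}_x}\,\|Mf\|_{L^2}^{\alpha/n}\,\|f\|_{L^2}^{1-\alpha/n} \;\lesssim\; \|(-\Delta_\xi)^{\alpha/2}a\|_{L^{2n/(n-\alpha)}_x L^{2n/(n+\alpha)}_\xi}\,\|f\|_{L^2}.
\]

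The main obstacle is justifying the adjoint manipulation that rewrites $a(x,D)f$ as a pairing of $h_x$ against $(-\Delta_\xi)^{-\alpha/2}G_x$, since $a(x,\cdot)$ is only assumed to satisfy the integrability conditions (i)--(ii) rather than to be smooth. This is handled by starting from Schwartz $f$ (so that $\hat f$ decays rapidly and the Riesz potential identity holds distributionally) and approximating $a(x,\xi)$ by Schwartz functions in $\xi$ for a.e.\ fixed $x$, with hypothesis (i) ensuring the needed uniform integrability and hypothesis (ii) controlling the fractional Laplacian; the already-derived pointwise estimate then allows passage to the limit and extension from Schwartz $f$ to $f\in L^2$ by density.
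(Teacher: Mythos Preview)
Your proof is correct and follows essentially the same route as the paper's: write $a(x,\cdot)=(-\Delta_\xi)^{-\alpha/2}h_x$, move the Riesz potential onto $e^{ix\cdot\xi}\hat f(\xi)$ by self-adjointness, apply Theorem~\ref{FractionalIntegral}(b) in the $\xi$ variable to obtain the pointwise bound, and then use H\"older and the $L^2$-boundedness of $M$ for the operator norm estimate. The paper is slightly terser on the justification of the adjoint step (it simply assumes $f$ Schwartz and invokes hypothesis~(i) to recover $a$ from $h_x$, then extends by density), but your more detailed discussion of this point is harmless and the substance is identical.
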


\begin{proof}
The case $\alpha = 0$ follows by Cauchy-Schwartz. To investigate the case $0<\alpha<n$, suppose first that $f$ is in Schwartz class. Let $b(x,\xi) =
(-\Delta_\xi)^{\frac{\alpha}{2}}a(x,\xi)$. Since $a\in
L^{\frac{2n}{n-\alpha}}$, then $a(x,\xi) =
(-\Delta_\xi)^{-\frac{\alpha}{2}}b(x,\xi)$ and we have
\begin{align*}
|a(x,D)f(x)| \leq \frac{1}{(2\pi)^n}\int_{\R^n}
\Big|(-\Delta_\xi)^{-\frac{\alpha}{2}}\Big(e^{ix\cdot\xi}\hat
f(\xi)\Big)\Big|\enspace\Big|b(x,\xi)\Big|d\xi.
\end{align*}
By Theorem \ref{FractionalIntegral}
\begin{align*}
\Big|(-\Delta_\xi)^{-\frac{\alpha}{2}}\Big(e^{ix\cdot\xi}\hat
f(\xi)\Big)\Big|\lesssim (Mf(x))^{\frac{\alpha}{n}}(M\hat
f(\xi))^{1-\frac{\alpha}{n}}.
\end{align*}
Hence,
\begin{align*}
|a(x,D)f(x)| &\lesssim (Mf(x))^{\frac{\alpha}{n}}\int_{\R^n}
(M\hat f(\xi))^{1-\frac{\alpha}{n}}\Big|b(x,\xi)\Big|d\xi\\
&\lesssim
(Mf(x))^{\frac{\alpha}{n}}\big\|b(x,\cdot)\big\|_{L^{\frac{2n}{n+\alpha}}}\big\|(M\hat
f)^{\frac{n-\alpha}{n}}\big\|_{L^{\frac{2n}{n-\alpha}}}\\
&\lesssim
(Mf(x))^{\frac{\alpha}{n}}\big\|b(x,\cdot)\big\|_{L^{\frac{2n}{n+\alpha}}}\big\|
f\|_{L^2}^{1-\frac{\alpha}{n}}
\end{align*}
for a.e. $x$. This proves (\ref{PointwiseBound}) for $f$ in
Schwartz class. We may then extend by continuity to $f\in L^2$.
Therefore, we have
\begin{align*}
\|a(x,D)f(x)\|_{L^2} &\lesssim
\|Mf\|_{L^2}^{\frac{\alpha}{n}}\|b\|_{L^{\frac{2n}{n-\alpha}}_x
L^{\frac{2n}{n+\alpha}}_\xi }
\|f\|_{L^2}^{1-\frac{\alpha}{n}}\\
&\lesssim
\|Mf\|_{L^2}^{\frac{\alpha}{n}}\|(-\Delta_\xi)^{\frac{\alpha}{2}}a\|_{L^{\frac{2n}{n-\alpha}}_x
L^{\frac{2n}{n+\alpha}}_\xi }
\|f\|_{L^2}^{1-\frac{\alpha}{n}}\\
&\lesssim \|(-\Delta_\xi)^{\frac{\alpha}{2}}a\|_{L^{\frac{2n}{n-\alpha}}_x
L^{\frac{2n}{n+\alpha}}_\xi }\|f\|_{L^2}.
\end{align*}
\end{proof}

We conclude this section with an estimate on pointwise
multipliers which will be needed in the proof of Theorem~\ref{ThmMain}
in Section~\ref{Section:CC}. First note that if $0\leq r< \frac{n}{2}$ and $\qT\in
L^{\frac{n}{2r}}$ then multiplication by $\qT$ yields a bounded
operator from the homogeneous Sobolev space $\dot H^r(\R^n)$ to
its dual $\dot H^{-r}(\R^n)$. (This follows easily from the
boundedness of the Sobolev embedding of $\dot H^r(\R^n)$ in
$L^{\frac{2n}{n-2r}}$). For the concentration compactness
arguments in Section \ref{Section:CC} we will need an extension of
this result to a larger space of potentials $\qT$, with negative
regularity index. Classes of pointwise multipliers between Sobolev
spaces have been extensively studied (see for example
\cite{MazShap}, \cite{LamGal} and further references given there).

We show that multiplication by any $\qT$ in the union of the
homogeneous Besov spaces $\dot B_\infty^{\frac{n}{p}-2r,p}$ with
$2\leq p< n/r$ yields a bounded operator from $\dot
H^{r}(\R^n)$ to $\dot H^{-r}(\R^n)$. We use the following notation
for the norm of the homogeneous Besov space:
\begin{align*}
\|f\|_{\dot B^{s,p}_q} = \Big(\sum_{k\in\Z}(2^{ks}\|P_k
f\|_{L^p})^q\Big)^{1/q}
\end{align*}
where $P_k$ are the Littlewood-Paley projections, $1\leq q,p\leq
\infty$ and $s\in \R$.

\begin{theorem}\label{ThmBilEst}
Let $0 <  r<n/2$ and $p\in[2,n/r)$. Then the following
bilinear estimate holds:
\begin{align}\label{BilEst}
\|\qT u\|_{\dot H^{-r}(\R^n)}\lesssim \|\qT\|_{\dot
B_\infty^{\frac{n}{p}-2r,p}(\R^n)}\|u\|_{\dot H^{r}(\R^n)}.
\end{align}
\end{theorem}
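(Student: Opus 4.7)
My plan is to apply Bony's paraproduct decomposition
\[
qu \;=\; T_q u + T_u q + \Pi(q,u),
\]
with $T_q u = \sum_k S_{k-3}q\cdot P_k u$, $T_u q = \sum_k S_{k-3}u\cdot P_k q$, and $\Pi(q,u) = \sum_k P_k q\cdot \widetilde P_k u$ (where $\widetilde P_k$ is a thickening of $P_k$ absorbing the near-diagonal overlap), and to bound each piece in $\dot H^{-r}$ via its Littlewood-Paley square function. Set $\phi_j := 2^{jr}\|P_j u\|_{L^2}$, so $(\phi_j) \in \ell^2(\Z)$ with norm $\|u\|_{\dot H^r}$, and note $\|P_j q\|_{L^p}\lesssim 2^{j(2r-n/p)}\|q\|_{\dot B_\infty^{n/p-2r,p}}$. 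The workhorses are Hölder with exponents $(p, 2p/(p-2))$ and the Bernstein inequality $\|P_j u\|_{L^{2p/(p-2)}}\lesssim 2^{jn/p}\|P_j u\|_{L^2} = 2^{j(n/p-r)}\phi_j$.

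For $T_q u$, each summand $S_{k-3}q\cdot P_k u$ is Fourier-supported in $\{|\xi|\sim 2^k\}$, so $P_j(T_q u)$ receives only $O(1)$ contributions from $k\sim j$. The low-frequency factor obeys the geometric bound $\|S_{j-3}q\|_{L^p}\lesssim 2^{j(2r-n/p)}\|q\|_{\dot B}$ (convergent since $2r-n/p>0$ when $p>n/2r$), whence $\|P_j(T_q u)\|_{L^2}\lesssim 2^{2jr}\|P_j u\|_{L^2}\|q\|_{\dot B}$; the corresponding $\dot H^{-r}$-weight $2^{-jr}\cdot 2^{2jr}\|P_j u\|_{L^2} = \phi_j\|q\|_{\dot B}$ is $\ell^2$-summable in $j$. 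For $T_u q$, applying Bernstein termwise to $\sum_{l<j-3}P_l u$ and Young's inequality gives $\|S_{j-3}u\|_{L^{2p/(p-2)}}\lesssim 2^{j(n/p-r)}A_j$ with $(A_j)\in\ell^2$ of norm $\lesssim\|u\|_{\dot H^r}$ (using $n/p-r>0$, from $p<n/r$); multiplying by $\|P_j q\|_{L^p}$ yields $\|P_j(T_u q)\|_{L^2}\lesssim 2^{jr}A_j\|q\|_{\dot B}$, again with $\ell^2$-summable $\dot H^{-r}$ contribution $A_j$.

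For $\Pi(q,u)$, each summand $P_j q\cdot\widetilde P_j u$ has Fourier support in $\{|\xi|\lesssim 2^j\}$, so only $j\gtrsim k$ contribute to $P_k\Pi(q,u)$. Pick $\rho$ with $1/\rho = 1/p + 1/2$, so $\rho\in[1,2]$; Bernstein gives $\|P_k f\|_{L^2}\lesssim 2^{kn/p}\|P_k f\|_{L^\rho}$, while Hölder in $L^\rho$ produces
\[
\|P_j q\cdot\widetilde P_j u\|_{L^\rho}\lesssim 2^{j(2r-n/p)}\cdot 2^{-jr}\phi_j\|q\|_{\dot B} = 2^{j(r-n/p)}\phi_j\|q\|_{\dot B}.
\]
Summing over $j\gtrsim k$ (a geometric tail with ratio $<1$ since $r-n/p<0$, convolved with $\phi\in\ell^2$) gives, by Young's inequality, $\|P_k\Pi(q,u)\|_{L^2}\lesssim 2^{kr}E_k\|q\|_{\dot B}$ with $(E_k)\in\ell^2$ of norm $\lesssim\|u\|_{\dot H^r}$. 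Assembling the three $\ell^2$-controlled contributions yields \eqref{BilEst}.

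The main obstacle is the endpoint $p=n/2r$: the Besov index $n/p-2r$ vanishes there, and the geometric sum controlling $T_q u$ degenerates into a logarithmic divergence. Handling this case will likely require exploiting the $\ell^\infty$ structure of the Besov norm more carefully, or obtaining it as a limit off the open range $(n/2r, n/r)$; the other endpoint $p=n/r$ is excluded by hypothesis and is what forces all of the implied constants to blow up as $p\uparrow n/r$.
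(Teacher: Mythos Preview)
Your approach via Bony's paraproduct is essentially the same as the paper's: both reduce to the Littlewood--Paley trichotomy together with Bernstein's inequality, only organized differently. Your treatment of the three pieces is correct on the open range $p\in(\max(2,n/2r),n/r)$.

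Your stated obstacle at the endpoint $p=n/2r$ is not a genuine one, and the paper's argument covers it. The issue arises only in your estimate of $T_q u$, where you bound $\|S_{j-3}q\|_{L^p}$ by summing $\|P_l q\|_{L^p}\lesssim 2^{l(2r-n/p)}\|q\|_{\dot B}$ over $l<j$; at $p=n/2r$ this sum is indeed logarithmic. The fix is simply to put the low-frequency factor in $L^\infty$ instead of $L^p$: Bernstein gives $\|P_l q\|_{L^\infty}\lesssim 2^{ln/p}\|P_l q\|_{L^p}\leq 2^{2lr}\|q\|_{\dot B}$, so
\[
\|S_{j-3}q\|_{L^\infty}\;\lesssim\;\sum_{l<j-3}2^{2lr}\|q\|_{\dot B}\;\lesssim\;2^{2jr}\|q\|_{\dot B},
\]
which converges for every $r>0$ regardless of $p$. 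Pairing this with $\|P_j u\|_{L^2}$ gives the same bound $\|P_j(T_q u)\|_{L^2}\lesssim 2^{2jr}\|P_j u\|_{L^2}\|q\|_{\dot B}$ you obtained, now uniformly in $p\in[\max(2,n/2r),n/r)$. This is precisely what the paper does: it always applies Bernstein to the factor of \emph{smaller} frequency, which is why the exponent in its estimate is $\frac{n}{p}\min(k',k'')$ and why no condition $p>n/2r$ ever enters. Your handling of $T_u q$ and $\Pi(q,u)$ already works at this endpoint, since they only use $p<n/r$.
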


\begin{proof}
We use a dyadic Littlewood-Paley decomposition
\[
1 = \sum_{k \in \Z} P_k
\]
where  $P_k$ are  the standard dyadic  Littlewood-Paley operators, which are localized in the 
fre\-quency regions $A_k = \{\xi:2^{k-1}<|\xi|<2^{k+1}\}$. 
We will show that the dyadic components  $P_k(\qT u)$ of $\qT u$ satisfy the 
correct bound with off-diagonal decay,
\begin{equation}\label{dyadic}
\|P_k (\qT u)\|_{{\dot H^{-r}}} \lesssim \|q\|_{\dot B_\infty^{\frac{n}{p}-2r,p}}
\sum_{k''} 2^{-c |k-k''|} \| P_{k''} u \|_{\dot H^r}, \qquad c > 0
\end{equation}
This in turn easily implies \eqref{BilEst}.

To prove \eqref{dyadic} we write
\begin{align*}
P_k (\qT  u) = \sum_{(k',k'')\,\in\,\mathcal A_k}
P_k\big( P_{k'}\qT P_{k''}u \big),
\end{align*}
where  the sum is taken over the set
\begin{align*}
\mathcal A_k = \{(k',k'')\in\Z^2:A_k\cap(A_{k'}+A_{k''})\neq
\emptyset\}.
\end{align*}
Then we have 
\begin{align}\label{diadic}
\|P_k (\qT u)\|_{{\dot H^{-r}}} \lesssim
\sum_{(k',k'')\,\in\,\mathcal A_k} 2^{-rk}\|P_k(P_{k'}\qT
P_{k''}u)\|_{L^2}.
\end{align}
To estimate the terms in the above sum we use Bernstein inequality
applied to the Littlewood-Paley projections:
\begin{align}\label{Bern}
\|P_k f\|_{L^t}\leq 2^{kn(1/s-1/t)}\|P_kf\|_{L^s} 
\end{align}
for $1\leq s\leq t\leq \infty$.  We consider the three cases in the Littlewood-Paley trichotomy: 
\medskip

\textbf{ (i) Low-high interactions, $|k'' - k| \leq 2$, $k' \leq k+2$.}
Here we estimate
\[
\| P_{k'} \qT \|_{L^\infty} \lesssim  2^{2r k'}   \|q\|_{\dot B_\infty^{\frac{n}{p}-2r,p}}
\]
and thus
\[
\| P_k (P_{k'} \qT P_{k''} u)\|_{\dot H^{-r}} \lesssim  2^{2r (k'-k'')}    \|q\|_{\dot B_\infty^{\frac{n}{p}-2r,p}}
\| P_{k''} u\|_{\dot H^{r}} 
\]
where the $k'$ summation is trivial.

\medskip

\textbf{(ii) High-low interactions, $|k' - k| \leq 2$, $k'' \leq k+2$.}
Here we set 
\[
\tilde p =\frac{2p}{p-2}, \qquad 2<\frac{2n}{n-r}<\tilde p\leq\infty
\]
 and use 
Bernstein to place $P_{k''} u$ in $L^p$, estimating
\[
\begin{split}
\| P_k (P_{k'} \qT P_{k''} u)\|_{\dot H^{-r}} \lesssim & \  2^{-kr} \| P_{k'} \qT P_{k''} u\|_{L^2}
\\
\lesssim  & \  2^{-kr} \| P_{k'} \qT\|_{L^p} \| P_{k''} u\|_{L^{\tilde p}} 
\\ 
\lesssim & \ 2^{-kr}  2^{-(\frac{n}p -2r)k'}    \|q\|_{\dot B_\infty^{\frac{n}{p}-2r,p}(\R^n)}
2^{(\frac{n}2 - \frac{n}{\tilde p}) k''}    2^{-k'' r}  \| P_{k''} u\|_{\dot H^{r}} 
\\ 
\lesssim & \ 2^{(r-\frac{n}p)(k-k'')}    \|q\|_{\dot B_\infty^{\frac{n}{p}-2r,p}(\R^n)}  \| P_{k''} u\|_{\dot H^{r}} 
\end{split}
\]
as needed since $r < \frac{n}p$.

\medskip 

\textbf{(iii) High-high $\to $ low  interactions, $|k' - k''| \leq 2$, $k \leq k'+2$}.
Here it is more efficient to use Bernstein for the product,
\[
\begin{split}
\| P_k (P_{k'} \qT P_{k''} u)\|_{\dot H^{-r}} \lesssim & \  2^{-kr} \| P_k(P_{k'} \qT P_{k''} u)\|_{L^2}
\\
\lesssim & \  2^{( \frac{n}p  -r) k } \| P_{k'} \qT P_{k''} u\|_{L^{\tilde p'}}
\\
\lesssim  & \  2^{( \frac{n}p  -r) k }  \| P_{k'} \qT\|_{L^p} \| P_{k''} u\|_{L^2} \\
\lesssim & \ 2^{(\frac{n}p-r)(k-k'')}    \|q\|_{\dot B_\infty^{\frac{n}{p}-2r,p}(\R^n)}  \| P_{k''} u\|_{\dot H^{r}} 
\end{split}
\]
which again suffices. This concludes the proof of \eqref{dyadic} and thus the proof of the theorem.

\end{proof}

\section{Concentration Compactness and a d-bar Problem}
\label{Section:CC}

In this section we prove Theorem~\ref{ThmMain}. To recall the set-up, we seek to
show that  the d-bar operator $L_\qT$ defined by
\[
L_\qT u = \dzb u + \qT \ol u, \qquad L_q : \dot H^\frac12 \to \dot H^{-\frac12}
\]
is invertible for all $q \in L^2$, and further that its inverse satisfies the
locally uniform bound
\begin{equation}
\| L_\qT^{-1}\|_{\dot H^{-\frac12} \to \dot H^\frac12} \leq
C(\|q\|_{L^2}).
\end{equation}

We remark that the main novelty here, and the difficult part, is
the fact that we can bound the norm of $L_q^{-1}$ uniformly for
$q$ in a bounded set in $L^2$. In turn, the  key ingredient in the
proof is a non-standard use of the method of profile
decompositions, as introduced in  \cite{G98}.

We begin with several preliminaries. We first recall some basic properties of the solid Cauchy
transform $\Icb$. For a proof, see for instance \cite{Nac} [Lemma 1.4].

\begin{lemma}\label{Nac:Lemma1.4}
a) If $h\in L^{p}$, $1<p<2$ and $1/p^*= 1/p-1/2$ then
\begin{align}\label{BasicIneq}
\|\Icb h\|_{L^{p^*}}\leq c_p\|h\|_{L^{p}},
\end{align}

b) If $f\in L^{p_1}\cap L^{p_2}$, with $1<p_1<2<p_2$, then the
function $u = \Icb f$ satisfies
\begin{align*}
\|u\|_{L^\infty}\leq c_{p_1,p_2}(\|f\|_{L^{p_1}}+\|f\|_{L^{p_2}}),
\end{align*}
\begin{align*}
|u(z_1)-u(z_2)|\leq
c_{p_2}|z_1-z_2|^{1-\frac{2}{p_2}}\|f\|_{L^{p_2}}
\end{align*}
and $\lim_{|z|\rightarrow \infty}u(z)=0$.
\end{lemma}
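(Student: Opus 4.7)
The lemma collects four classical estimates for the convolution operator
\[
\Icb f(z) \;=\; \frac{1}{\pi}\int_{\C} \frac{f(w)}{z-w}\, dA(w),
\]
which is a fractional integral of order one in $\R^2$. The plan is to treat each assertion by exploiting the weak-$L^2$ character of the kernel $1/(\pi z)$ and by scaling.

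For part (a), I would observe that $1/(\pi z)$ lies in the Lorentz space $L^{2,\infty}(\R^2)$ with $\||\cdot|^{-1}\|_{L^{2,\infty}}$ a fixed constant. Young's convolution inequality in the weak-type form gives a bounded mapping
\[
L^p \ast L^{2,\infty}\longrightarrow L^{p^*},\qquad \tfrac{1}{p^*}=\tfrac{1}{p}-\tfrac{1}{2},
\]
valid precisely for $1<p<2$, which is exactly the claim \eqref{BasicIneq}. Equivalently, one may invoke the Hardy--Littlewood--Sobolev inequality for the Riesz potential of order one in dimension two.

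For the $L^\infty$ bound in part (b), I would split the convolution integral at a fixed radius $r=1$ around $z$. On $|w-z|\le 1$ apply Hölder with exponents $p_2$ and $p_2'$: since $p_2>2$ we have $p_2'<2$, so $|w-z|^{-1}$ is in $L^{p_2'}$ of the unit disc and contributes a constant times $\|f\|_{L^{p_2}}$. On $|w-z|>1$ apply Hölder with $p_1$ and $p_1'$: since $p_1<2$ we have $p_1'>2$, so $|w-z|^{-1}$ is in $L^{p_1'}$ of the complement and contributes a constant times $\|f\|_{L^{p_1}}$. Summing yields the claimed uniform bound. For the decay at infinity, I would first verify the statement for compactly supported test functions (where $|\Icb f(z)| \le C_f/|z|$ follows from $|z-w|\ge |z|/2$ outside the support for large $|z|$), then pass to the general case by approximating $f$ in $L^{p_1}\cap L^{p_2}$ by compactly supported functions and using the $L^\infty$ bound just established to propagate the uniform convergence.

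For the Hölder continuity, I would write the difference as a single convolution,
\[
u(z_1)-u(z_2) \;=\; \frac{1}{\pi}\int_{\C}\frac{z_2-z_1}{(z_1-w)(z_2-w)}\,f(w)\,dA(w),
\]
and estimate by Hölder with exponents $p_2, p_2'$. The only remaining task is to compute the $L^{p_2'}$ norm of the kernel $K(w)=\frac{z_2-z_1}{(z_1-w)(z_2-w)}$. A direct scaling $w = \tfrac{z_1+z_2}{2} + |z_1-z_2|\,\zeta$ converts the integral into a dimensionless one (finite because $p_2'<2$ handles the two local singularities and $2p_2'>2$ handles the decay at infinity), and tracks an overall factor of $|z_1-z_2|^{1-2/p_2}$, which is exactly the claimed Hölder exponent.

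I do not anticipate a serious obstacle: each piece reduces to a weak-type convolution bound or a scaling computation. The only point that requires minor care is the decay at infinity, which is not a pure norm estimate and therefore needs the density-plus-uniform-bound argument outlined above rather than a direct inequality.
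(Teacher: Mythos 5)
The paper does not prove this lemma itself; it simply cites \cite{Nac}, Lemma 1.4, so there is no proof in the paper to compare against directly. Your argument is correct and self-contained. Part (a) is indeed the Hardy--Littlewood--Sobolev inequality for the Riesz potential of order one in two dimensions, equivalently the weak-type Young inequality with $|z|^{-1}\in L^{2,\infty}(\R^2)$, and the condition $1<p<2$ is exactly what makes the exponents admissible. Your proof of the $L^\infty$ bound by splitting at radius one and applying H\"older with $p_2'$ (local, needs $p_2'<2$, i.e.\ $p_2>2$) and $p_1'$ (far field, needs $p_1'>2$, i.e.\ $p_1<2$) is the standard argument. For the H\"older continuity, writing the difference as convolution with $K(w)=\dfrac{z_2-z_1}{(z_1-w)(z_2-w)}$ and rescaling $w=\tfrac{z_1+z_2}{2}+|z_1-z_2|\zeta$ gives $\|K\|_{L^{p_2'}}\sim|z_1-z_2|^{\,2/p_2'-1}=|z_1-z_2|^{\,1-2/p_2}$, and the dimensionless integral converges because $p_2'<2$ controls the two point singularities and $2p_2'>2$ controls the tail; this is exactly the claimed exponent. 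Your handling of the decay at infinity is the one point that is not a pure norm estimate, and you correctly resolve it by approximating with compactly supported functions (where $|z-w|\gtrsim|z|$ outside the support yields $O(1/|z|)$ decay) and then invoking the already-proved uniform $L^\infty$ bound to pass to the general case. No gaps.
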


Next we prove a qualitative result, which asserts that
$L_q^{-1}$ is a well defined operator from $L^{\frac43}$ to $L^4$:

\begin{lemma}\label{perturb}
\noindent Let $\qT\in L^2$. Then for any $f\in L^\frac43$, the equation
\begin{align}\label{pseudo}
L_{\qT}  u = f
\end{align}
has a unique solution $u \in L^4$.
\end{lemma}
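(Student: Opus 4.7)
My plan is to reduce \eqref{pseudo} to the Fredholm framework on $L^4$. Applying $\Icb$ to both sides converts the equation to the integral form
\begin{equation*}
(I + T_q)\, u = g, \qquad T_q u := \Icb(q \bar u), \qquad g := \Icb f.
\end{equation*}
By Lemma~\ref{Nac:Lemma1.4}(a) with $p = 4/3$, we have $g \in L^4$ with $\|g\|_{L^4} \lesssim \|f\|_{L^{4/3}}$. Moreover, $q\in L^2$ and $u\in L^4$ give $q\bar u \in L^{4/3}$ by H\"older, and the same lemma yields $T_q : L^4 \to L^4$ bounded with $\|T_q\|_{L^4 \to L^4} \lesssim \|q\|_{L^2}$. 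Thus a contraction/Neumann-series argument immediately handles the case $\|q\|_{L^2}$ below a fixed threshold.

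For general $q \in L^2$, I would split $q = q_s + q_l$ with $\|q_s\|_{L^2}$ below the contraction threshold and $q_l \in L^\infty$ compactly supported (possible by density). Since $I + T_{q_s}$ is then invertible by Neumann series, the identity
\begin{equation*}
I + T_q = (I + T_{q_s})\bigl(I + (I + T_{q_s})^{-1} T_{q_l}\bigr)
\end{equation*}
reduces the problem to inverting a compact perturbation of the identity. The main technical point is the compactness of $T_{q_l}$ on $L^4$: for a bounded sequence $u_n \in L^4$, the functions $q_l \bar u_n$ are uniformly bounded in $L^p$ for $p \in (1,2]$ with a fixed compact support, and Lemma~\ref{Nac:Lemma1.4}(b) then gives that $\Icb(q_l \bar u_n)$ is uniformly bounded in $L^\infty \cap C^{0,1/2}_{\mathrm{loc}}$. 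Combined with the uniform $O(|z|^{-1})$ decay of $\Icb$ applied to compactly supported integrands, Arzel\`a--Ascoli on compact sets together with a tail estimate produce an $L^4$-convergent subsequence. The Fredholm alternative then gives bijectivity of $I + T_q$ on $L^4$ provided injectivity holds.

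The remaining step, which is the main obstacle, is uniqueness: $L_q u = 0$ with $u \in L^4$ must force $u \equiv 0$. Any such $u$ satisfies $u = -\Icb(q \bar u)$, since the difference is an entire function in $L^4(\C)$ and hence vanishes. I would then attempt a Vekua-type similarity principle: set $\psi := -q \bar u / u$ on $\{u \ne 0\}$ and $\psi := 0$ elsewhere, so $|\psi| \le |q|$ and $\psi \in L^2$; solving $\bar\partial s = \psi$ makes $\Phi := e^{-s} u$ entire. In the sub-critical regime $q \in L^p$ with $p > 2$ the function $s$ is bounded and one concludes $u \equiv 0$ at once, but in the present $L^2$-critical setting $s$ lies only in $\mathrm{BMO}$, so $e^{-s}$ need not have a positive lower bound. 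To bypass this, I would truncate $q_R := q \mathbf{1}_{|q|\leq R} \in L^\infty$, apply the sub-critical Vekua principle to each $q_R$ to establish uniqueness at that level, and then pass to the limit using the fact that the Fredholm inverses are continuous under small $L^2$-perturbations (which holds on the small-$q_s$ component, and $\|q - q_R\|_{L^2} \to 0$). This yields triviality of the kernel and, via the Fredholm alternative, the desired existence and uniqueness in $L^4$.
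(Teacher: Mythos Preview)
Your reduction to the integral equation $(I+T_q)u=\Icb f$ on $L^4$, the Neumann-series treatment of the small part, and the compactness argument for $T_{q_l}$ are all fine and match the paper's strategy for existence. The genuine gap is in the uniqueness step.

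First, a minor point: your truncation $q_R=q\mathbf 1_{|q|\le R}$ lies in $L^2\cap L^\infty$ but not in any $L^{p_1}$ with $p_1<2$, so $s=\Icb\psi$ need not be bounded and the ``sub-critical Vekua principle'' does not apply to $q_R$ as stated; you would at least need a spatial cutoff as well. More importantly, even granting invertibility of $I+T_{q_R}$ for each $R$, your passage to the limit is circular. Perturbing from $q_R$ to $q$ by a Neumann series requires $\|(I+T_{q_R})^{-1}\|_{L^4\to L^4}\cdot\|T_q-T_{q_R}\|_{L^4\to L^4}<1$ for some $R$, and you have no control whatsoever on $\|(I+T_{q_R})^{-1}\|$ as $R\to\infty$. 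The parenthetical remark about ``continuity on the small-$q_s$ component'' only gives continuity of the inverse near $q=0$, not near the given $q$. So the kernel triviality for $L_q$ is not established.

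The paper avoids the limit altogether by applying the Vekua gauge only to the \emph{nice} part of $q$. Write $q=q_n+q_s$ with $q_n\in L^{p_1}\cap L^{p_2}$, $1<p_1<2<p_2$, and $\|q_s\|_{L^2}$ below the contraction threshold. For a solution $u\in L^4$ of $L_qu=0$, set $\nu=\exp\bigl(\Icb(q_n\,\ol u/u)\bigr)$ on $\{u\neq0\}$ and $\nu=1$ elsewhere. By Lemma~\ref{Nac:Lemma1.4}(b) both $\nu$ and $\nu^{-1}$ are bounded, and one computes $\dzb(u\nu)=-q_s\,\ol u\,\nu$. Since $|\ol u\,\nu|=|u\nu|$, the $L^{4/3}\to L^4$ bound for $\Icb$ gives $\|u\nu\|_{L^4}\le c\|q_s\|_{L^2}\|u\nu\|_{L^4}$, hence $u=0$. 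This is exactly the idea you had in mind, but the point is to gauge away only $q_n$ and absorb the residual small $q_s$ perturbatively, rather than gauging away an approximation of the whole $q$ and then trying to take a limit.
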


\begin{proof}
 Write $\qT = \qT_n+\qT_s$, where $\qT_n\in
L^{p_1}\cap L^{p_2}$ with $1<p_1<2<p_2$ and $\|\qT_s\|_{L^2}$
small (see (\ref{rest}) below). Given a solution $u\in L^4$ of
(\ref{pseudo}), we define
\begin{align}\nu :=
\begin{cases}
e^{\Icb \big(\qT_n \frac{\ol {u}}{u}\big)}& \text{ if }u\neq 0\\
1 &\text{ if }u = 0.
\end{cases}
\end{align}
Then $\nu$ and $1/\nu$ are in $L^\infty$, in view
of Lemma \ref{Nac:Lemma1.4}. So $u\nu\in L^4$. Further, since $\nu\in W^{1,p}$ for $p\in[p_1,p_2]$ then we may apply the Leibnitz rule:
\begin{align}\label{prod}
\dzb(u\nu)=(\dzb u + \qT_n \ol {u})\nu = (- \qT_s \ol u + f)\nu.
\end{align}
Thus, using (\ref{BasicIneq}) we have
\begin{align}\label{decomp_norm} \|u
\nu\|_{L^4} \leq c\|\qT_s\|_{L^2} \|u \nu\|_{L^4} +
c\|f\|_{L^\frac43}\|\nu\|_{L^\infty}.
\end{align}
To prove uniqueness for (\ref{pseudo}), let $f=0$ and choose
$\qT_s$ with
\begin{align}\label{rest} \|\qT_s\|_{L^2}\leq
1/2c,
\end{align}
Then (\ref{decomp_norm}) yields
\begin{align*}
\|u \nu\|_{L^4} \leq \frac{1}{2} \|u \nu\|_{L^4},
\end{align*}
so $u=0$.

To show existence, we write (\ref{pseudo}) as
\begin{align}\label{integ}
\mathcal B u = \Icb f, \qquad \mathcal B = I+\Icb(\qT\, \ol\cdot).
\end{align}
The operator $\Icb( \qT\, \ol\cdot)$ is compact $L^4\rightarrow
L^4$ (see Lemma \ref{compactness}). It follows by the Fredholm
alternative that $\mathcal B$ is invertible in the $L^4\rightarrow
L^4$ topology, and we can solve for $u=\mathcal B^{-1}\Icb f$ in
$L^4$.
\end{proof}

We continue with an easy extension of the previous Lemma.
\begin{lemma}\label{LemLqInv}
  For each $q \in L^2$, the operator $L_q: \dot H^\frac12 \to \dot
  H^{-\frac12}$ is invertible and
\begin{equation}\label{Lq-inv_temp}
\|L_q^{-1}f\|_{\dot H^\frac12} \leq C(q) \|f\|_{ \dot
H^{-\frac12}}.
\end{equation}
\end{lemma}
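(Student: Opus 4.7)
The plan is to view $L_q = \dbar + T_q$, with $T_q u := q \bar u$, as a compact perturbation of the isomorphism $\dbar : \dot H^{\frac12}(\R^2) \to \dot H^{-\frac12}(\R^2)$, apply the Fredholm alternative to conclude that $L_q$ is Fredholm of index zero, and finally appeal to Lemma~\ref{perturb} to rule out any kernel. That $\dbar$ itself is an isomorphism between the two homogeneous Sobolev spaces is immediate on the Fourier side, its symbol having modulus comparable to $|\xi|$.

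The boundedness of $T_q$ between these spaces is a three-step combination of the Sobolev embedding $\dot H^{\frac12}(\R^2) \hookrightarrow L^4(\R^2)$, H\"older's inequality, and the dual embedding $L^{4/3}(\R^2) \hookrightarrow \dot H^{-\frac12}(\R^2)$:
\[
\|T_q u\|_{\dot H^{-\frac12}} \lesssim \|q \bar u\|_{L^{4/3}} \leq \|q\|_{L^2} \|u\|_{L^4} \lesssim \|q\|_{L^2} \|u\|_{\dot H^{\frac12}}.
\]
For compactness I would approximate $q$ by $q_n \in C^\infty_c(\R^2)$ with $q_n \to q$ in $L^2$. For each such $q_n$, the operator $T_{q_n}$ is compact: after multiplying by a cutoff $\chi \in C^\infty_c(\R^2)$ equal to $1$ on $\mathrm{supp}\,q_n$, it factors through the bounded map $u \mapsto \chi u : \dot H^{\frac12}(\R^2) \to H^{\frac12}(\R^2)$ and the compact Rellich--Kondrachov embedding $H^{\frac12}(\R^2) \hookrightarrow L^2(\mathrm{supp}\,\chi)$, followed by multiplication by $q_n \in L^\infty$ and the continuous inclusion of $L^2$-functions with fixed compact support into $L^{4/3}(\R^2) \hookrightarrow \dot H^{-\frac12}$. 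Applying the boundedness estimate to $q - q_n$ gives $\|T_q - T_{q_n}\|_{\dot H^{\frac12} \to \dot H^{-\frac12}} \lesssim \|q - q_n\|_{L^2} \to 0$, so $T_q$ is a norm limit of compact operators and is therefore itself compact.

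Consequently $L_q$ is Fredholm of index zero. Any $u$ in $\ker L_q \subset \dot H^{\frac12}$ lies in $L^4$ by Sobolev and solves $L_q u = 0 \in L^{4/3}$, so the uniqueness half of Lemma~\ref{perturb} forces $u = 0$. A Fredholm operator of index zero with trivial kernel is a bijection, and the open mapping theorem yields the bound \eqref{Lq-inv_temp} with a constant $C(q)$ that is allowed to depend on $q$ in an arbitrary fashion. The main obstacle is the compactness step when $q$ is only in $L^2$, resolved by the density argument above; the locally uniform bound $C(\|q\|_{L^2})$ of Theorem~\ref{ThmMain} is genuinely stronger and is the content of the concentration-compactness and profile-decomposition arguments in the remainder of the section.
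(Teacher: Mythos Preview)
Your proof is correct. Both you and the paper reduce to the uniqueness statement in Lemma~\ref{perturb}, but the surjectivity arguments differ slightly. You establish compactness of $T_q:\dot H^{1/2}\to\dot H^{-1/2}$ directly (via $C^\infty_c$ approximation of $q$ and Rellich--Kondrachov) and then invoke the Fredholm alternative in the $\dot H^{\pm 1/2}$ setting. The paper instead recycles the full strength of Lemma~\ref{perturb}: given $f\in\dot H^{-1/2}$ one has $\Icb f\in\dot H^{1/2}\subset L^4$, so the existence half of Lemma~\ref{perturb} (which already used the Fredholm alternative, but in $L^4$, with compactness of $\Icb(q\,\ol\cdot)$ on $L^4$ cited from elsewhere) produces a solution $u\in L^4$; the identity $u=-\Icb(q\bar u)+\Icb f$ then bootstraps $u$ into $\dot H^{1/2}$. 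Your route is more self-contained since you prove the needed compactness from scratch; the paper's is shorter because Lemma~\ref{perturb} has already absorbed that work. In either case the conclusion is purely qualitative --- the constant $C(q)$ depends on $q$ in an uncontrolled way --- and you are right that the uniform bound $C(\|q\|_{L^2})$ in Theorem~\ref{ThmMain} is the genuine content of the concentration-compactness argument that follows.
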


\begin{proof}
Multiplication by $\qT$ maps $\dot H^{\half}$ to $\dot
H^{-\half}$, and we may rewrite (\ref{LqEq}) as
\begin{align}\label{integ_}
\mathcal B u = \Icb f
\end{align}
where $\mathcal B = (I+\Icb(\qT\ol\cdot)):\dot H^\half\rightarrow
\dot H^\half$. Since, $\dot H^\half\subset L^4$, injectivity
follows from Lemma \ref{perturb}.

It remains to prove surjectivity. Let $f\in \dot H^{-\half}$. Then
$\Icb f\in \dot H^\half\subset L^4$, so the proof of Lemma
\ref{perturb} yields a solution $u\in L^4$ of $\mathcal B u =
\Icb f$, i.e.
\begin{align*}
u = -\Icb(\qT\ol u) + \Icb f.
\end{align*}
Since $\qT\ol u\in L^\frac43\subset\dot H^{-\half}$, we have
$\Icb(\qT\ol u)\in \dot H^\half$ hence also $u\in \dot H^\half$
and $L_\qT u = f$.
\end{proof}


By the last lemma, the best constant $C(q)$ in
\eqref{Lq-inv} is well-defined and finite for each $q \in L^2$.
The next step is to study the dependence of $L_q^{-1}$ and of
$C(q)$ on $q$:

\begin{lemma} \label{l:cont}
  The operator $L_q^{-1}$ depends smoothly on $q \in L^2$, and the
  best constant $C(q)$ in \eqref{Lq-inv} has a local Lipschitz
  dependence on $q$. More precisely, given $q_0 \in L^2$ there exists
  $\epsilon > 0$, depending only on $C(\qT_0)$, so that within the ball
  $B(q_0,\epsilon)$ the map
\[
q \to L_q^{-1}
\]
is analytic, with a uniform Lipschitz bound
\begin{equation}
\| L_{q_1}^{-1} - L_{q_2}^{-1}\|_{\dot H^{-\frac12} \to \dot
  H^{\frac12}} \lesssim C(\qT_0)^2 \|q_1 - q_2\|_{L^2}
\end{equation}
as well as
\begin{equation}
|C(q_1) - C(q_2)| \lesssim C(\qT_0)^2 \|q_1 - q_2\|_{L^2}
\end{equation}
\end{lemma}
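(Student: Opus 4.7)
\textbf{Proof proposal for Lemma \ref{l:cont}.} The plan is to combine a standard resolvent-type perturbation identity with a simple multiplier bound, and then to use a Neumann series around $q_0$ to obtain a uniform local bound on $C(q)$ that breaks the circularity of the naive estimate.

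First I would record the multiplier estimate
\begin{equation*}
\| q\, v \|_{\dot H^{-\frac12}(\R^2)} \lesssim \|q\|_{L^2} \|v\|_{\dot H^{\frac12}(\R^2)}, \qquad q \in L^2, \ v \in \dot H^{\frac12},
\end{equation*}
which is immediate from the Sobolev embeddings $\dot H^{\frac12} \hookrightarrow L^4$ and $L^{\frac43} \hookrightarrow \dot H^{-\frac12}$ together with H\"older (it is also the endpoint $p=2$ case of Theorem~\ref{ThmBilEst}). Consequently, denoting by $M_h$ the $\R$-linear operator $M_h v := h \bar v$, we have $\|M_h\|_{\dot H^{1/2} \to \dot H^{-1/2}} \lesssim \|h\|_{L^2}$.

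Next, starting from $L_{q_1} u_1 = f = L_{q_2} u_2$, subtraction yields $L_{q_1}(u_1-u_2) = (q_2-q_1) \overline{u_2}$, so at the level of inverses the resolvent-type identity
\begin{equation*}
L_{q_1}^{-1} - L_{q_2}^{-1} = L_{q_1}^{-1} \circ M_{q_2-q_1} \circ L_{q_2}^{-1}
\end{equation*}
holds as $\R$-linear maps $\dot H^{-\frac12} \to \dot H^{\frac12}$. Combining this identity with the multiplier bound and the very definition of $C(q_j)$ gives the tentative estimate
\begin{equation*}
\| L_{q_1}^{-1} - L_{q_2}^{-1} \|_{\dot H^{-\frac12}\to \dot H^{\frac12}} \lesssim C(q_1)\, C(q_2)\, \|q_1-q_2\|_{L^2}. \qquad (\ast)
\end{equation*}

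The main obstacle is that $(\ast)$ involves $C(q_1)$ and $C(q_2)$ on the right, while a priori we have no local control on $C(q)$ near $q_0$. To close this I expand around $q_0$: writing $L_q = L_{q_0} + M_{q-q_0}$ and inverting formally yields the Neumann series
\begin{equation*}
L_q^{-1} = \sum_{n=0}^{\infty} L_{q_0}^{-1}\bigl(- M_{q-q_0}\, L_{q_0}^{-1}\bigr)^{n},
\end{equation*}
where the operator $M_{q-q_0} L_{q_0}^{-1}$ on $\dot H^{-\frac12}$ has norm $\lesssim C(q_0) \|q-q_0\|_{L^2}$. Choosing $\epsilon \sim 1/C(q_0)$ small enough that this product is $\leq 1/2$, the series converges absolutely on $B(q_0,\epsilon)$ and furnishes the uniform bound $C(q) \leq 2C(q_0)$ there. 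Plugging this back into $(\ast)$ gives the asserted Lipschitz estimate
\begin{equation*}
\| L_{q_1}^{-1} - L_{q_2}^{-1} \|_{\dot H^{-\frac12}\to \dot H^{\frac12}} \lesssim C(q_0)^2 \|q_1-q_2\|_{L^2},
\end{equation*}
and the corresponding bound on $|C(q_1)-C(q_2)|$ follows from the reverse triangle inequality for operator norms. Finally, analyticity of $q \mapsto L_q^{-1}$ on $B(q_0,\epsilon)$ is nothing but convergence of the same Neumann series, whose $n$-th term is a bounded $\R$-multilinear function of the perturbation $q-q_0 \in L^2$ (the antilinearity of $M$ in its argument notwithstanding). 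Everything past the multiplier bound and the resolvent identity is then routine.
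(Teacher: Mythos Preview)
Your proposal is correct and follows essentially the same route as the paper: the multiplier bound $\|q\bar v\|_{\dot H^{-1/2}}\lesssim \|q\|_{L^2}\|v\|_{\dot H^{1/2}}$, a Neumann series around $q_0$ with radius $\epsilon\sim C(q_0)^{-1}$ to obtain both analyticity and the uniform local bound $C(q)\lesssim C(q_0)$, and then the resolvent identity (equivalently, subtracting the two Neumann expansions) to extract the $C(q_0)^2$ Lipschitz estimate. The only cosmetic difference is that the paper derives the Lipschitz bound directly from the first-order term of the Neumann series rather than stating the resolvent identity separately.
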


\begin{proof}
For $q \in B(q_0,\epsilon)$ we rewrite the equation
\[
L_q u = f
\]
as
\[
L_{q_0} u = (q_0-q) \bar u + f
\]
and further as
\[
u = L_{q_0}^{-1} f + L_{q_0}^{-1} ( (q_0-q) \bar u ).
\]
If $\|q-q_0\|_{L^2} \ll C(\qT_0)^{-1}$ then the above equation can
be solved by a Neumann series. In particular we obtain the
analytic dependence of $u$ on $q$, as well as the bounds
\[
\| u\|_{\dot H^\frac12} \lesssim C(q_0) \|f\|_{\dot H^{-\frac12}}
\]
and
\[
\| u - L_{q_0}^{-1} f\|_{\dot H^\frac12}  \lesssim C(q_0)^2 \|
q_0-q\|_{L^2} \|f\|_{\dot H^{-\frac12}}.
\]
The latter leads to the desired Lipschitz bound for $L_q^{-1}$, by
repeating the same argument with $q,q_0$ replaced by $q_1, q_2$ in
the same ball.
\end{proof}

It remains to prove that the $C(q)$ bound is uniform for $q$ in a
bounded set in $L^2$. We denote by
\[
C(R) = \sup \{ C(q); \ \|q\|_{L^2} \leq R\}, \qquad C: \R^+ \to
[0, \infty],
\]
We need to prove that $C(R)$ is finite for all $R > 0$. This is the
case for $R$ small, as can be seen from the proof of the previous
lemma by taking $\qT_0=0$. We also have:

\begin{lemma}\label{LemCNondec}
The function $C(R)$ is nondecreasing and continuous.
\end{lemma}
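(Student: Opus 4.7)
The monotonicity is trivial: if $R_1\le R_2$ then $\{q:\|q\|_{L^2}\le R_1\}\subset\{q:\|q\|_{L^2}\le R_2\}$, so $C(R_1)\le C(R_2)$. The substance of the lemma is continuity, and the plan is to derive it from the locally Lipschitz dependence of $C(q)$ on $q\in L^2$ given by Lemma~\ref{l:cont}. Because $C$ is monotone, it suffices to verify left and right continuity separately at each $R_0\ge 0$ with $C(R_0)<\infty$.

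For left continuity at such an $R_0$, I fix $\epsilon>0$ and pick $q$ with $\|q\|_{L^2}\le R_0$ and $C(q)>C(R_0)-\epsilon$. If $\|q\|_{L^2}<R_0$ there is nothing to show. Otherwise, I rescale slightly down: set $q_\lambda=\lambda q$ for $\lambda\in(0,1)$, so that $\|q_\lambda\|_{L^2}=\lambda R_0<R_0$ and $\|q_\lambda-q\|_{L^2}=(1-\lambda)R_0\to 0$ as $\lambda\to 1^-$. Lemma~\ref{l:cont}, applied at the reference point $q$ (whose Lipschitz constant is controlled by $C(q)^2\le C(R_0)^2$), then forces $C(q_\lambda)\to C(q)$. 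Hence $\liminf_{R\to R_0^-}C(R)\ge C(q_\lambda)>C(R_0)-2\epsilon$ for $\lambda$ close enough to $1$, and letting $\epsilon\to 0$ gives left continuity.

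For right continuity, I take any sequence $R_n\to R_0^+$ and choose $q_n$ with $\|q_n\|_{L^2}\le R_n$ and $C(q_n)\ge C(R_n)-1/n$. The key move is a rescaling that pulls the $q_n$ back into the closed ball of radius $R_0$: set $\tilde q_n=(R_0/R_n)q_n$, so that $\|\tilde q_n\|_{L^2}\le R_0$, hence $C(\tilde q_n)\le C(R_0)$, while $\|q_n-\tilde q_n\|_{L^2}\le R_n-R_0\to 0$. Lemma~\ref{l:cont} applied around $\tilde q_n$ (with Lipschitz constant bounded by $C(R_0)^2$) yields $|C(q_n)-C(\tilde q_n)|\lesssim C(R_0)^2(R_n-R_0)\to 0$, so $C(R_n)\le C(q_n)+1/n\le C(R_0)+o(1)$. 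Combined with monotonicity this gives $C(R_n)\to C(R_0)$.

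The main point to watch is that the Lipschitz constant in Lemma~\ref{l:cont} depends on the base point, so both rescaling arguments have to be anchored at a point lying in the closed ball of radius $R_0$, where the constant is a priori bounded by $C(R_0)^2<\infty$. Beyond this bookkeeping, no concentration-compactness or other heavy machinery is required: continuity of $C$ is a soft consequence of the quantitative perturbative statement of Lemma~\ref{l:cont}, with the finiteness of $C(R_0)$ being exactly what lets both one-sided limits close up. A useful byproduct of the right-continuity argument is that $\{R:C(R)<\infty\}$ is automatically open in $[0,\infty)$, hence of the form $[0,R^\ast)$, which is presumably the setup for the concentration-compactness argument to follow.
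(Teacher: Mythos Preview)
Your proof is correct and follows essentially the same approach as the paper: both derive continuity of $C(R)$ from the uniform perturbative bound in Lemma~\ref{l:cont}, using that the ball size and Lipschitz constant there depend only on $C(q_0)$, which is controlled by $C(R_0)$. The paper compresses this into a single observation (a uniform Lipschitz constant for $C(q)$ on $B(0,R+\epsilon)$), whereas you spell out left and right continuity separately via explicit rescalings $q\mapsto\lambda q$; this is the same mechanism made more explicit.
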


\begin{proof}
The monotonicity is obvious. The continuity is due to the
uniformity in the previous lemma. Precisely, if $C(R-0)= \lim_{r \nearrow R} C(r)$ is finite
then  for $\|q_0\|_{L^2} < R$, the ball size $\epsilon$ in the
previous lemma depends only on $ C(R-0)$. This yields a uniform
Lipschitz constant for $C(q)$ in $B(0,R+\epsilon)$, and the
desired continuity (indeed local Lipschitz continuity) follows.
\end{proof}

To prove that $C(R)$ is finite for all $R$ we argue by
contradiction. Choose $R_0 > 0$ minimal so that
\[
C(R_0) = \infty.
\]
Then for $R < R_0$ we have $C(R) < \infty$, and, by the continuity
property,
\[
\lim_{R \to R_0} C(R) = \infty.
\]
Thus there exists  sequence $q_n$  so that
\[
R_0 > \| q_n\|_{L^2} \to R_0
\]
and
\[
\| L_{q_n}^{-1}\|_{\dot H^{-\frac12} \to \dot H^\frac12} \to
\infty.
\]
If we knew that $q_n$ converged (say on a subsequence) to some $q
\in L^2$   then we would have
\[
\| L_{q_n}^{-1}\|_{\dot H^{-\frac12} \to \dot H^\frac12} \to \|
L_{q}^{-1}\|_{\dot H^{-\frac12} \to \dot H^\frac12}  \neq \infty
\]
which would contradict the minimality of $R_0$.

However, there are two  obvious obstructions to compactness
arising from the symmetries of the problem, namely translation and
scaling. Any such symmetry can be described using a positive
scale factor $\lambda$ and a translation distance $y$. We
introduce the notation
\[
S(\lambda,y) q = \lambda q(\lambda (x-y)).
\]
Then
\[
C(q) = C(S(\lambda,y) q ).
\]
In view of this fact, one might try to show that we have
compactness up to symmetries, i.e. that (on a subsequence) there
exist $\lambda_n, y_n$ so that
\[
S(\lambda_n,y_n) q_n \to q \qquad \text{ in } L^2.
\]
Since the constant $C(q)$ is easily seen to be invariant with respect to symmetries,
this would again lead to contradiction.

This seems to be still too much to ask.  We will prove instead a
weaker compactness statement, which will nevertheless be
sufficient to establish the finiteness of $C(R)$. As an
intermediate step in establishing a compactness property, we first
note that, in view of Theorem \ref{ThmBilEst}, we can extend the
perturbative theory to a larger space, namely
\[
q \in  \dot B^{-\frac13,3}_\infty.
\]
The exact exponents for the Besov space are not important, just
the fact that this space has negative Sobolev regularity and the
same scaling as $L^2$, and in particular we have the Sobolev
embedding
\[
L^2 \subset \dot B^{-\frac13,3}_\infty.
\]
We note below a special case of Theorem $\ref{ThmBilEst}$:
\begin{lemma}\label{LemBilinear}
The following bilinear estimate holds:
\begin{equation}
\| q u\|_{\dot H^{-\frac12}} \lesssim \|
q\|_{\dot B^{-\frac13,3}_\infty} \|u \|_{\dot H^\frac12}.
\end{equation}
\end{lemma}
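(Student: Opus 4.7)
The proof is a direct specialization of Theorem \ref{ThmBilEst} to a specific choice of parameters, so the plan is simply to identify the right exponents and verify that they fall in the admissible range.

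The plan is to apply Theorem \ref{ThmBilEst} in dimension $n=2$ with regularity parameter $r = \frac12$ and integrability parameter $p = 3$. First I would check the admissibility condition $p \in [\max(2, n/2r), n/r)$: here $n/2r = 2$ and $n/r = 4$, so the required range is $p \in [2, 4)$, and indeed $p = 3$ lies in this interval. Next I would compute the Besov regularity index produced by the theorem, namely $\frac{n}{p} - 2r = \frac{2}{3} - 1 = -\frac{1}{3}$. Thus the theorem yields exactly
\begin{equation*}
\| q u \|_{\dot H^{-\frac12}(\R^2)} \lesssim \| q \|_{\dot B^{-\frac13, 3}_\infty} \| u \|_{\dot H^{\frac12}(\R^2)},
\end{equation*}
which is the claimed bilinear estimate.

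Since this is merely a substitution into an already established theorem, there is no real obstacle; the only thing to double-check is the arithmetic of the Besov index and the admissibility of $p = 3$, both of which are immediate. I would therefore present the proof as a one-line application of Theorem \ref{ThmBilEst}, with the specific values of $n$, $r$, and $p$ recorded for the reader's convenience.
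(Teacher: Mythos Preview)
Your proposal is correct and takes exactly the same approach as the paper, whose proof simply reads ``See Theorem \ref{ThmBilEst}.'' Your version is slightly more detailed in that you explicitly record the choice $n=2$, $r=\tfrac12$, $p=3$ and verify the admissibility and the Besov index, which is a helpful elaboration.
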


\begin{proof}
See Theorem \ref{ThmBilEst}.
\end{proof}

Using this we obtain the following extension of
Lemma~\ref{l:cont}:

\begin{lemma} \label{l:cont+}
 Given $q_0 \in L^2$ there exists $\epsilon > 0$, depending only on $C(q_0)$, so that within the ball
\[
\| q - q_0 \|_{ \dot B^{-\frac13,3}_\infty} \leq \epsilon,
\]
 the map
\[
q \to L_q^{-1}
\]
is analytic, with a uniform Lipschitz bound
\begin{equation}
\| L_{q_1}^{-1} - L_{q_2}^{-1}\|_{\dot H^{-\frac12} \to \dot
  H^{\frac12}} \lesssim C(\qT_0)^2 \|q_1 - q_2\|_{ \dot B^{-\frac13,3}_\infty  },
\end{equation}
as well as
\begin{equation}
|C(q_1) - C(q_2)| \lesssim C({\qT_0})^2 \|q_1 - q_2\|_{
\dot B^{-\frac13,3}_\infty  }.
\end{equation}
\end{lemma}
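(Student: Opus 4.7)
The plan is to reprise the proof of Lemma~\ref{l:cont} almost verbatim, with the single change that the crude multiplication bound $\|(q_0-q)u\|_{\dot H^{-1/2}}\lesssim \|q_0-q\|_{L^2}\|u\|_{\dot H^{1/2}}$ implicitly used there (via Sobolev embedding) is replaced by the sharper bilinear estimate of Lemma~\ref{LemBilinear},
$$\|(q_0-q)u\|_{\dot H^{-\frac12}}\lesssim \|q_0-q\|_{B^{-\frac13,3}_\infty}\|u\|_{\dot H^{\frac12}}.$$
This substitution is admissible because $L_q$ depends on $q$ only through the multiplication operator $u\mapsto q\bar u:\dot H^{\frac12}\to \dot H^{-\frac12}$, and it is precisely this operator that Lemma~\ref{LemBilinear} controls in the weaker Besov norm.

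Concretely, first I would rewrite $L_q u=f$ in the perturbative form
$$L_{q_0} u = f + (q_0 - q)\bar u,$$
and, applying $L_{q_0}^{-1}$ (which is available by hypothesis with norm $C(q_0)$), as the fixed-point equation
$$u = L_{q_0}^{-1} f + T u, \qquad Tu:= L_{q_0}^{-1}\bigl((q_0-q)\bar u\bigr)\quad\text{on }\dot H^{\frac12}.$$
The operator $T$ is conjugate-linear, but its real-linear operator norm on $\dot H^{\frac12}$ is bounded, by the two displayed estimates, by $C'\,C(q_0)\,\|q_0-q\|_{B^{-\frac13,3}_\infty}$. Choosing
$$\epsilon \;\sim\; C(q_0)^{-1}$$
renders this norm at most $\tfrac12$, so the equation is solvable by a convergent Neumann series. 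Reading the series as a power series in the parameter $q_0-q$ exhibits $q\mapsto L_q^{-1}$ as analytic on the ball $\|q-q_0\|_{B^{-\frac13,3}_\infty}<\epsilon$.

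For the Lipschitz bound I would apply the resolvent-type identity
$$L_{q_1}^{-1} - L_{q_2}^{-1} \;=\; L_{q_1}^{-1}\Bigl((q_2-q_1)\,\overline{L_{q_2}^{-1}(\cdot)}\Bigr),$$
valid for $q_1,q_2$ in the ball, and estimate the right-hand side with Lemma~\ref{LemBilinear} together with the uniform operator norm bound $\|L_{q_j}^{-1}\|_{\dot H^{-\frac12}\to\dot H^{\frac12}}\lesssim C(q_0)$ (which follows from the Neumann series just constructed, possibly after shrinking $\epsilon$ by an absolute factor). This yields the claimed Lipschitz estimate with constant $\lesssim C(q_0)^2$, and the corresponding Lipschitz bound for $C(q)=\|L_q^{-1}\|_{\dot H^{-\frac12}\to\dot H^{\frac12}}$ follows by the triangle inequality for operator norms.

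There is no serious obstacle beyond invoking the bilinear estimate; the content of the lemma is simply the observation that the perturbation theory of $L_q$, established in Lemma~\ref{l:cont} with respect to the $L^2$ topology, extends without loss to the strictly weaker, scale-invariant topology of $B^{-\frac13,3}_\infty$. This extension is essential in what follows because, by the Sobolev embedding $L^2\subset B^{-\frac13,3}_\infty$, a bounded sequence in $L^2$ that is merely weakly convergent (after extracting a profile) will still converge in this Besov norm off the profile, which is exactly what the subsequent concentration-compactness argument requires.
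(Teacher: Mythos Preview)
Your proposal is correct and matches the paper's own approach: the paper explicitly states that the proof is identical to that of Lemma~\ref{l:cont} and omits it, the only change being the use of the bilinear estimate in Lemma~\ref{LemBilinear} in place of the $L^2$ multiplication bound. Your write-up in fact supplies more detail than the paper does.
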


The proof is identical to the proof of Lemma~\ref{l:cont}, and is
omitted. This property shows that  it would suffice to establish
the weaker convergence property
\[
S(\lambda_n,y_n) q_n \to q \qquad \text{ in }
\dot B^{-\frac13,3}_\infty.
\]

Now we return to our compactness question. The discussion above
suggests that we should look at compactness modulo symmetries. The
last lemma tells us that we only need convergence in the weaker
$\dot B^{-\frac13,3}_\infty$ topology.  Still, for an arbitrary
sequence $q_n$ which is bounded in $L^2$ even this is too much to
hope for, as the $q_n$'s may be split into pieces which are driven
by different symmetries. The situation is very accurately
described using a profile decomposition, see \cite{G98} and also
\cite{S98}:

\begin{proposition}\label{p:BG}
  Let $q_n$ be a bounded sequence in $L^2$. Then up to the extraction
  of a subsequence, it can be decomposed in the following way:
\begin{equation}
\forall l \in \N, q_n = \sum_{k = 1}^l S(\lambda_n^k,y_n^k) q^k +
q_n^l
\end{equation}
where the functions $q^j$ are in $L^2$ for all $j \in \N$, and the
remainders $q_n^l$ are uniformly bounded in $L^2$ and satisfy
\begin{equation}\label{qnl}
\lim_{l \to \infty} \limsup_{n \to \infty} \|q_n^l\|_{ \dot
B^{-\frac13,3}_\infty} = 0,
\end{equation}
and where for any $k \in \N$, $(\lambda_n^k,y_n^k)$ is a sequence
in $\R^+ \times \R^2$ with the property that for every $j \neq k$
we have either
\begin{equation}\label{orth1}
\lim_{n \to \infty} \frac{\lambda_n^j}{\lambda_n^k} +
\frac{\lambda_n^k}{\lambda_n^j} = \infty
\end{equation}
or
\begin{equation}\label{orth2}
\lambda_n^j = \lambda_n^k, \qquad \lim_{n \to \infty}
|y_n^j-y_n^k| \lambda_n^j  = \infty .
\end{equation}
Furthermore, for each $l $ we have
\begin{equation}\label{sq-sum}
\| q_n\|_{L^2}^2  = \sum_{k=1}^l \|q^k\|_{L^2}^2  +
\|q_n^l\|_{L^2}^2 + o(1)
\end{equation}
as $n \to \infty$.
\end{proposition}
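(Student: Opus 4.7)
The plan is to follow the standard concentration-compactness profile decomposition scheme of Gérard \cite{G98} and Solimini \cite{S98}, adapted to the scaling-invariant embedding $L^2 \hookrightarrow \dot B^{-\frac13,3}_\infty$ provided by Lemma \ref{LemBilinear}. The key feature is that both the strong space $L^2$ and the weak space $\dot B^{-\frac13,3}_\infty$ are invariant under the symmetry action $S(\lambda,y)$, so all obstructions to strong compactness are captured by sequences of scale-translation parameters $(\lambda_n^k, y_n^k)$.

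First I would establish the single-profile extraction step. Using the Littlewood-Paley characterization $\|q\|_{\dot B^{-\frac13,3}_\infty} = \sup_{j \in \Z} 2^{-j/3} \|P_j q\|_{L^3}$, if $\nu := \limsup_{n \to \infty} \|q_n\|_{\dot B^{-\frac13,3}_\infty}$ is positive then, passing to a subsequence, there exist frequencies $2^{j_n}$ with $2^{-j_n/3} \|P_{j_n} q_n\|_{L^3} \geq \nu/2$. Since $P_{j_n} q_n$ is frequency-localized at scale $2^{j_n}$, a Chebyshev argument produces centers $y_n^1$ with $|P_{j_n} q_n(y_n^1)| \gtrsim \nu \cdot 2^{j_n/3}$. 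Choosing $\lambda_n^1$ so that $S(\lambda_n^1,y_n^1)^{-1}$ sends frequency $2^{j_n}$ to unit frequency, the rescaled sequence $\tilde q_n := S(\lambda_n^1,y_n^1)^{-1} q_n$ is bounded in $L^2$ (by scale-invariance) and $|P_0 \tilde q_n(0)| \gtrsim \nu$. Since $P_0 \tilde q_n$ is uniformly smooth on compact sets via Bernstein's inequalities, Rellich-Kondrachov furnishes a subsequence converging weakly in $L^2$ to a limit $q^1$ with $\|q^1\|_{L^2} \gtrsim \nu$.

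Next I would iterate. Setting $q_n^1 := q_n - S(\lambda_n^1, y_n^1) q^1$, the weak convergence $S(\lambda_n^1,y_n^1)^{-1} q_n \rightharpoonup q^1$ in $L^2$ gives the asymptotic Pythagorean identity
\begin{equation*}
\|q_n\|_{L^2}^2 = \|q^1\|_{L^2}^2 + \|q_n^1\|_{L^2}^2 + o(1),
\end{equation*}
by expanding the norm squared and noting the cross term vanishes. Repeating the extraction on $q_n^1$ yields profiles $q^k$ together with \eqref{sq-sum}; summability of $\sum_k \|q^k\|_{L^2}^2 \leq \liminf_n \|q_n\|_{L^2}^2$ forces the sizes $\|q^k\|_{L^2}$ to tend to zero, and since each extracted profile satisfies $\|q^k\|_{L^2} \gtrsim \nu_k$ with $\nu_k := \limsup_n \|q_n^{k-1}\|_{\dot B^{-\frac13,3}_\infty}$, the residual Besov norms $\nu_k$ must also decay, giving \eqref{qnl}. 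A diagonal subsequence then handles all $l$ simultaneously.

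The main technical obstacle is verifying the orthogonality conditions \eqref{orth1}--\eqref{orth2} between distinct profiles and showing that the cross terms in the Pythagorean expansion actually vanish. I would argue by contradiction: if two pairs $(\lambda_n^j, y_n^j)$ and $(\lambda_n^k, y_n^k)$ with $j \neq k$ failed both \eqref{orth1} and \eqref{orth2}, then $S(\lambda_n^k, y_n^k)^{-1} S(\lambda_n^j, y_n^j)$ would converge along a subsequence to a fixed nontrivial element $S(\mu, z)$ of the symmetry group. The weak $L^2$ limit of $S(\lambda_n^k, y_n^k)^{-1}$ applied to the remainder at step $k-1$ would then inherit a nonzero copy of $S(\mu,z) q^j$, which is incompatible with the orthogonality of the extraction procedure at step $j$ (where $q^j$ was already subtracted). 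This orthogonality simultaneously forces all mixed $L^2$ inner products to vanish in the limit, confirming \eqref{sq-sum}. Implementing this step is routine once one has the single-profile extraction, and follows the Gérard-Solimini template line-by-line; the only mild novelty is that the weak topology is $\dot B^{-\frac13,3}_\infty$ rather than a Lebesgue space, which poses no additional difficulty thanks to the Littlewood-Paley structure used above.
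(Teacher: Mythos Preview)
Your sketch is correct and follows the standard G\'erard--Solimini extraction scheme; the paper, however, does not prove this proposition at all. It simply remarks that the statement is the elliptic profile decomposition of G\'erard \cite{G98} for $\dot H^s$ with $s=\tfrac13$, transferred to $L^2$ by conjugating with the isometry $|D|^{1/3}:\dot H^{1/3}\to L^2$; under this map the G\'erard remainder (small in $L^3$, the critical Lebesgue space for $\dot H^{1/3}(\R^2)$) becomes small in $\dot B^{-1/3,3}_\infty$, and the $\dot H^{1/3}$-invariant dilations become the $L^2$-invariant $S(\lambda,y)$. So the paper's ``proof'' is a two-line reduction to a cited result, whereas you have reconstructed that cited result directly in the target spaces. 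Both are valid; yours is self-contained, the paper's is more economical.

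One small quantitative slip worth noting: from $2^{-j_n/3}\|P_{j_n}q_n\|_{L^3}\geq \nu/2$ together with the uniform $L^2$ bound, the interpolation $\|f\|_{L^3}\leq \|f\|_{L^2}^{2/3}\|f\|_{L^\infty}^{1/3}$ gives $\|P_{j_n}q_n\|_{L^\infty}\gtrsim \nu^{3}2^{j_n}$, not $\nu\,2^{j_n/3}$; after rescaling this yields $|P_0\tilde q_n(0)|\gtrsim \nu^3$ and hence $\|q^k\|_{L^2}\gtrsim \nu_k^{3}$ rather than $\nu_k$. This is harmless for the argument, since any positive lower bound depending on $\nu_k$ is enough to force $\nu_k\to 0$ from $\sum_k\|q^k\|_{L^2}^2<\infty$.
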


We remark here that this is the elliptic version of the profile
decomposition, as opposed to the wave equation version \cite{BG}
or the Schr\"odinger version \cite{MV98}.

We also remark that the original elliptic profile decomposition of
G\'erard~\cite{G98} is for $\dot H^s$ functions with $0<s<1$. The
transition to the statement above is straightforward, simply by
choosing $s=\frac13$ and applying a $|D|^\frac13$ operator.

\bigskip

We apply this decomposition to our sequence $q_n$, and will
distinguish two scenarios:

\begin{itemize}
\item Exactly one profile. Then up to symmetries we have
\[
q_n \to q^1 \qquad \text{ in }  \dot B^{-\frac13,3}_\infty
\]
and,  according to the prior discussion, the proof of Theorem~\ref{ThmMain}
is concluded.

\item more than one profile. Then in view of \eqref{sq-sum} we
must have
\begin{equation} \label{Lqk}
\sup_k \| q^k\|_{L^2} = R < R_0.
\end{equation}
Hence in this case we control the operator norms
$\|L_{q^k}^{-1}\|$ associated to each profile uniformly, and we
will use this to control $\| L_{q_n}^{-1}\|$.
\end{itemize}
\bigskip
To eliminate the case of multiple profiles we will use the
solutions to the $L_{q^k}$ equations to construct a solution to
$L_{q_n}$.  Precisely,  in order to complete the proof of Theorem~\ref{ThmMain}
it suffices to prove the following:

\begin{proposition}\label{PropLimC}
Let $q_n$ be a bounded sequence of $L^2$ functions with a profile
decomposition as above, so that \eqref{Lqk} holds.  Then we have
\begin{equation}\label{lqn}
\limsup_{n \to \infty} C(q_n) \lesssim C(R).
\end{equation}
\end{proposition}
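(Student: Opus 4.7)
I would prove Proposition \ref{PropLimC} by contradiction, using an elliptic profile decomposition of a near-extremal sequence and matching it against the given profile decomposition of $q_n$. Suppose the conclusion fails; then after extracting a subsequence there exist $\delta > 0$, $u_n \in \dot H^{\frac12}$ and $f_n := L_{q_n} u_n \in \dot H^{-\frac12}$ with $\|u_n\|_{\dot H^{\frac12}} = 1$ and $\|f_n\|_{\dot H^{-\frac12}} \leq 1/(C(R)+\delta)$. The plan is to show that in fact $\|f_n\|_{\dot H^{-\frac12}} \geq 1/C(R) - o(1)$, a contradiction. To this end I apply Gerard's elliptic profile decomposition \cite{G98} at regularity level $s=\frac12$ to $\{u_n\}$:
\begin{equation*}
u_n = \sum_{j=1}^M S(\mu_n^j, w_n^j) U^j + r_n^M,
\end{equation*}
with pairwise scale/translation orthogonality of the parameters $(\mu_n^j, w_n^j)$, $\lim_M \limsup_n \|r_n^M\|_{L^4} = 0$ (via Sobolev $\dot H^{\frac12} \hookrightarrow L^4$), and the asymptotic Pythagoras identity $\|u_n\|^2_{\dot H^{\frac12}} = \sum_j \|U^j\|^2_{\dot H^{\frac12}} + \|r_n^M\|^2_{\dot H^{\frac12}} + o(1)$.

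The heart of the argument is to match $u$-profiles with $q$-profiles and pass to the limit in $L_{q_n} u_n = f_n$. Call $(j,k)$ \emph{matched} if along a subsequence $\mu_n^j/\lambda_n^k \to \Lambda_{jk} \in (0,\infty)$ and $\lambda_n^k(w_n^j - y_n^k) \to Z_{jk} \in \R^2$; the two sets of orthogonality conditions imply that each $u$-profile $j$ is matched with at most one $q$-profile $k$, and vice versa. Substituting both decompositions into $L_{q_n} u_n = f_n$ and rescaling about $(\mu_n^j, w_n^j)$, I need to show that three families of cross-terms vanish in $\dot H^{-\frac12}$: (i) the Besov tail $q_n^l\,\overline{S(\mu_n^j,w_n^j)U^j}$, controlled by Lemma \ref{LemBilinear} once $l$ is large since $\|q_n^l\|_{\dot B^{-\frac13,3}_\infty} \to 0$ by \eqref{qnl}; (ii) the $u$-remainder interaction $q_n \overline{r_n^M}$, for which $\|q_n \bar r_n^M\|_{\dot H^{-\frac12}} \lesssim \|q_n\|_{L^2} \|r_n^M\|_{L^4} \to 0$; and (iii) the off-diagonal terms $S(\lambda_n^k,y_n^k) q^k \cdot \overline{S(\mu_n^j,w_n^j) U^j}$ for \emph{un}matched $(j,k)$, whose vanishing is the principal technical obstacle and should follow from combining the scale/translation orthogonality \eqref{orth1}--\eqref{orth2} of both profile systems with the bilinear estimate of Theorem \ref{ThmBilEst}. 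What survives, for each $u$-profile $j$ after rescaling, is either a limiting profile equation $L_{\tilde q^k} U^j = G^j$ for matched $(j,k)$ (where $\tilde q^k$ is a symmetry image of $q^k$ with $\|\tilde q^k\|_{L^2} = \|q^k\|_{L^2} \leq R$) or $\bar\partial U^j = G^j$ for unmatched $j$, with $G^j$ the corresponding $\dot H^{-\frac12}$-profile of $f_n$ in the $(\mu_n^j, w_n^j)$ frame extracted via weak limits.

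Since $C(\tilde q^k) = C(q^k) \leq C(R)$ for every $k$ (and $C(0) \leq C(R)$ by monotonicity), in both the matched and unmatched cases we obtain $\|U^j\|_{\dot H^{\frac12}} \leq C(R) \|G^j\|_{\dot H^{-\frac12}}$ uniformly across $j$. A companion Pythagoras identity for $f_n$ in $\dot H^{-\frac12}$ comes out of the same equation analysis, using $\|\bar\partial r_n^M\|_{\dot H^{-\frac12}} = \tfrac12 \|r_n^M\|_{\dot H^{\frac12}}$ together with the asymptotic $\dot H^{-\frac12}$-orthogonality of the $G^j$'s (inherited from the scale/translation orthogonality of $(\mu_n^j, w_n^j)$). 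Combining these ingredients,
\begin{equation*}
1 = \sum_j \|U^j\|^2_{\dot H^{\frac12}} + \|r_n^M\|^2_{\dot H^{\frac12}} + o(1) \leq C(R)^2 \Big(\sum_j \|G^j\|^2_{\dot H^{-\frac12}} + \|\bar\partial r_n^M\|^2_{\dot H^{-\frac12}}\Big) + o(1) = C(R)^2 \|f_n\|^2_{\dot H^{-\frac12}} + o(1),
\end{equation*}
which forces $\|f_n\|_{\dot H^{-\frac12}} \geq 1/C(R) - o(1)$, contradicting the choice $\|f_n\|_{\dot H^{-\frac12}} \leq 1/(C(R)+\delta)$ for $n$ large and establishing $\limsup_n C(q_n) \leq C(R)$.
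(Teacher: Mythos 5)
Your proposal takes a genuinely different route from the paper. You argue by contradiction, extracting a near-extremal sequence $u_n$, applying a second elliptic profile decomposition to $u_n$ in $\dot H^{\frac12}$, matching $u$-profiles against the given $q$-profiles, passing to limiting profile equations, and closing with a Pythagoras identity on both sides. The paper instead works \emph{directly}: given an arbitrary $f$, it partitions $f$ using explicit space--frequency truncation operators $T^{\mu_n}(\lambda_n^k,y_n^k)$ adapted to each $q$-profile, solves the corresponding $L_{S q^k}$ equations to build an approximate solution $u_n^{\mathrm{app}}$, verifies three almost-orthogonality properties (P1)--(P3) using Lemmas~\ref{l:fixedk} and~\ref{l:kj}, and upgrades to an exact solution by reiteration. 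Your strategy is the classical Kenig--Merle style double-profile argument; the paper's is constructive and avoids decomposing $u_n$ entirely. Both are legitimate, and your sketch captures the right broad outline.

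That said, there is a real gap at the step you yourself flag as ``the principal technical obstacle'': step~(iii), vanishing of the unmatched cross terms $S(\lambda_n^k,y_n^k)q^k\cdot\overline{S(\mu_n^j,w_n^j)U^j}$ in $\dot H^{-\frac12}$. You propose to obtain this from Theorem~\ref{ThmBilEst} (Lemma~\ref{LemBilinear}) plus the orthogonality of the two profile systems, but that estimate cannot close the argument on its own: it bounds the product by $\|S(\lambda_n^k,y_n^k)q^k\|_{\dot B^{-\frac13,3}_\infty}\,\|S(\mu_n^j,w_n^j)U^j\|_{\dot H^{\frac12}}$, and since the Besov norm is invariant under $S(\lambda,y)$, the factor $\|q^k\|_{\dot B^{-\frac13,3}_\infty}$ does not decay regardless of how separated the parameters are. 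The genuine mechanism is spatial/frequency separation of the two factors, which must be established independently --- for example, by approximating $q^k$ and $U^j$ by compactly supported, band-limited functions, proving exact vanishing of the product for $n$ large, and then using the bilinear estimate only to pass to the limit in the approximation. This is precisely the content the paper packages into the truncation operators $T^\mu$ and Lemma~\ref{l:kj}; your sketch cannot simply outsource it to Theorem~\ref{ThmBilEst}. Similarly, the ``companion Pythagoras identity for $f_n$ in $\dot H^{-\frac12}$'' deserves a short proof (asymptotic orthogonality of $S(\mu_n^j,w_n^j)G^j$ among themselves is standard from $(\mu_n^j,w_n^j)$-orthogonality, but orthogonality against $\dbar r_n^M$ uses that $S(\mu_n^j,w_n^j)^{-1}r_n^M\rightharpoonup 0$ weakly in $\dot H^{\frac12}$ and that $\dbar$ intertwines with $S$). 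Finally, the bound $\|r_n^M\|^2_{\dot H^{\frac12}}\le C(R)^2\|\dbar r_n^M\|^2_{\dot H^{-\frac12}}$ silently uses $C(R)\ge 2 = C(0)$; this is true by monotonicity, but worth stating. With those gaps filled, the argument would be a valid alternative to the paper's.
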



\begin{proof}
For $f \in \dot H^{-\frac12}$ we seek to solve
\[
L_{q_n} u = f.
\]
 By Lemma~\ref{l:cont+},
the tails $q_n^l$ play a perturbative role in this analysis.
Precisely, by choosing $l$ large enough and $n$ large enough we
can insure that
\[
\| q_n^l\|_{\dot B^{-\frac13,3}_\infty} \ll_{C(R)} 1
\]
and thus neglect them. Thus, for the rest of the proof we simply
fix $l$ and assume that
\[
q_n = \sum_{k = 1}^l S(\lambda_n^k,y_n^k) q^k.
\]
Here its components $S(\lambda_n^k,y_n^k) q^k$ are localized
around $ y_n^k$ at frequency scale $\lambda_n^k$, and are
separating as $n \to \infty$. To take advantage of this we also
split $f$ in a linear fashion as
\[
f = \sum f_n^k + f_n^{out}
\]
so that $ f_n^k$ will primarily interact only with
$S(\lambda_n^k,y_n^k) q^k$, and $f_n^{out}$ does not interact with
any of the $S(\lambda_n^k,y_n^k) q^k$. Then we seek an approximate solution of the form
\[
u_n^{app} = \sum  u_n^k + u_n^{out}, \qquad    u_n^k=
L_{S(\lambda_n^k,y_n^k) q^k}^{-1}  f_n^k, \qquad
  u_n^{out} =  L_0^{-1} f_n^{out}.
\]
Thus we have
\[
L_{q_n} u_n^{app} = \sum_{k \neq j}  S(\lambda_n^k,y_n^k) q^k 
\bar u_n^j + \sum_k   S(\lambda_n^k,y_n^k) q^k  \bar u_n^{out}   + f.
\]
 To succeed, we need to insure that
we have the following properties:

\begin{enumerate}
\item[(P1)] Almost orthogonal decomposition for $f$,
\begin{equation}
\| f_n^{out}\|_{\dot H^{-\frac12}}^2+ \sum_k \| f_n^k\|_{\dot H^{-\frac12}}^2  \lesssim \|f\|_{\dot H^{-\frac12}}^2  + o_n(1) \|f\|_{\dot H^{-\frac12}}^2.
\end{equation}
\item[(P2)] Almost orthogonal decomposition for $u_n^{app}$,
\begin{equation}
\|u_n^{app}\|_{\dot H^\frac12}^2  \lesssim  \| u_n^{out}\|_{\dot
H^{\frac12}}^2 + \sum_k \| u_n^k\|_{\dot
H^{\frac12}}^2 + o_n(1) \|f\|_{\dot H^{-\frac12}}^2.
\end{equation}
\item[(P3)] Negligible off-diagonal interactions,
\begin{equation}
\begin{split}
\| S(\lambda_n^k,y_n^k) q^k u_n^j \|_{\dot H^{-\frac12}} = & \  o_n(1)
\|f\|_{\dot H^{-\frac12}} \qquad k \neq j,
\\
\| S(\lambda_n^k,y_n^k) q^k u_n^{out} \|_{\dot H^{-\frac12}} = & \  o_n(1)
\|f\|_{\dot H^{-\frac12}}.
\end{split}
\end{equation}
\end{enumerate}
Here we remark that all implicit constants should be universal.
However, all expressions $o_n(1)$, which decay to zero as $n \to
\infty$, may have a decay rate that depends on all parameters in
our problem, namely  $q^k$, $\lambda_n^k$ and $y_n^k$ (but not on $f$). It is for this reason that the $o_n(1)$ term is not included in the first term on the right in (P1). 

We first verify that these three properties (P1), (P2) and (P3)
suffice in order to prove Proposition~\ref{PropLimC}. To see that,
we observe that in view of \eqref{Lqk}, the approximate solution $u_n^{app}= u_n^{app}(f)$ satisfies
\[
\| u^{app}_n\|_{\dot H^\frac12} \lesssim (C(R) + o_n(1))\|f\|_{\dot
H^{-\frac12}}  , \qquad \| L_{q_n} u_n^{app}-f \|_{\dot
H^{-\frac12}}  \lesssim o_n(1)\|f\|_{\dot H^{-\frac12}}.
\]
If $n$ is large enough then the error can be made arbitrarily small, therefore a
simple reiteration scheme would allow us to pass from an approximate solution to an
exact solution. Thus  Proposition~\ref{PropLimC} is proved.

\bigskip

It remains to construct a decomposition with the above properties.
In order to construct the decomposition functions $f_n$ we introduce a
family of truncation operators $T^{\mu}(\lambda,y)$, where
$(\lambda,y)$ are associated to our symmetry group and $\mu \geq
1$ is an additional dimensionless scale parameter. Precisely, we
set
\[
T^{\mu}(\lambda,y) = \chi\left(\mu^{-2} \lambda (x-y)\right)
P_{[\lambda / \mu, \lambda  \mu ]}
\]
where $\chi$ is a Schwartz function with compactly supported Fourier
transform, and so that near zero we have
\[
1 - \chi(x) = O(|x|^N).
\]
The role of the support assumption is to insure that  our
operators $T^{\mu}(\lambda,y)$ are frequency localized in the
region
\[
\lambda/ \mu \leq |\xi| \leq \lambda \mu.
\]
 With this notation,  the components $f_n^k$ of $f$ are defined by
\[
f_n^k = T^{\mu_n}(\lambda_n^k,y_n^k)  f
\]
using a  slowly increasing sequence $\mu_n \to \infty$.  Here the
meaning of slowly is taken relative to the growth rates in
\eqref{orth1}, \eqref{orth2}. The reason we let $\mu_n \to \infty$
is so that in the limit this localization captures the effect of
all of $q^k$.

In order to prove that the functions $f_n^k$ and $q_n^k$ have the
desired properties we first consider some simple properties of the
operators  $T^{\mu}(\lambda,y)$. The first set of properties
involve a single scale:

\begin{lemma}\label{l:fixedk}
The operators $T^{\mu}(\lambda,y)$ have the following properties
uniformly in $(\lambda,y) \in \R^+ \times \R^2$:

(i) They are bounded in $\dot H^s$ for $|s| \leq \frac12$,
 uniformly in  $\mu \geq 1$.

(ii) We have the decay property
\begin{equation}\label{k-away}
\| (1-T^{\mu^2}(\lambda, y)) T^{\mu}(\lambda, y)\|_{\dot H^s \to
\dot H^s} \lesssim \mu^{-N}.
\end{equation}

(iii)  For $q \in L^2$ we have the bound
\begin{equation}\label{k-q}
\lim_{\mu \to \infty} \|   S(\lambda, y) q (1-T^{\mu}(\lambda,
y))  \|_{\dot H^\frac12 \to \dot H^{-\frac12}} = 0.
\end{equation}

(iv) For $q \in L^2$ we have the commutator bound
\begin{equation}\label{k-com}
\lim_{\mu \to \infty} \|  [ L_{S(\lambda, y) q}, T^{\mu}(\lambda,
y)]  \|_{\dot H^\frac12 \to \dot H^{-\frac12}} = 0.
\end{equation}
\end{lemma}


\begin{proof} [Proof of Lemma \ref{l:fixedk}]
We first note that all of the properties in the lemma are scale
and translation invariant, therefore we can simply set $\lambda =
1$ and $y = 0$ and drop them from the notation.

Next, we note that the projector part of $T^{\mu}$ selects the
frequencies $[\mu^{-1},\mu]$, whereas the multiplication part is
localized at frequency $\mu^{-2}$ which is much smaller.  This implies
that $T^\mu$ maps every dyadic frequency shell into a slight
enlargement of itself. Because of this it suffices to prove the bounds
in part (i) and (ii) for functions in a fixed dyadic shell, and the
output will also be localized in a double dyadic shell.  But on a
fixed dyadic frequency shell the $\dot H^s$ weights $|\xi|^s$ have a
fixed size. Consequently the $\dot H^s$ bounds in (i) and (ii) are all
equivalent uniformly in $\mu \gg 1$, and we can simply set $s=0$.

The $L^2$ boundedness of $T^\mu$ is trivial, uniformly in $\mu$. 
 Given the frequency localization of the multiplicative part we have
\[
(1-T^{\mu^2})T^\mu = (1 - \chi(\mu^{-4} x) P_{[\mu^{-2},\mu^2]})
\chi(\mu^{-2} x) P_{[\mu^{-1},\mu]} =  (1 - \chi(\mu^{-4} x))
\chi(\mu^{-2} x) P_{[\mu^{-1},\mu]}
\]
so (ii) also follows.

For (iii) we write
\[
q(1-T^\mu) =(1-P_{[\mu^{-\frac12},\mu^{\frac12}]}) q
(1-P_{[\mu^{-1},\mu]}) + P_{[\mu^{-\frac12},\mu^{\frac12}]} q
(1-P_{[\mu^{-1},\mu]}) + q(1-\chi(\mu^{-2} x)) P_{[\mu^{-1},\mu]}
\]
and use the multiplicative property $L^2 \cdot \dot H^\frac12 \subset \dot H^{-\frac12}$.
The first and the last term decay since
$(1-P_{[\mu^{-\frac12},\mu^{\frac12}]}) q $ and $q(1-\chi(\mu^{-2}
x))$ decay to zero in $L^2$. The middle term decays due to the
increasing frequency separation between the two factors. Precisely,
by a careful use of Bernstein's inequality we have the dyadic bound
\[
\|P_{\lambda_1} q P_{\lambda_2} u \|_{\dot H^{-\frac12}} \lesssim \min\left\{
\frac{\lambda_1}{\lambda_2},\frac{\lambda_2}{\lambda_1}
\right\}^\frac12 \|P_{\lambda_1} q \|_{L^2} \|P_{\lambda_2} u \|_{\dot H^{\frac12}}
\]
which after dyadic summation yields 
\[
\| P_{[\mu^{-\frac12},\mu^{\frac12}]} q (1-P_{[\mu^{-1},\mu]})
u\|_{\dot H^{-\frac12}} \lesssim \mu^{-\frac14} \| q\|_{L^2} \|
u\|_{\dot H^{\frac12}}
\]

For (iv) we treat separately the $\dbar$ and the $q$ part of
$L_q$. For the $q$ part we disregard the commutator structure and
write
\[
[q,T^\mu] = q( T^\mu -1) - (T^\mu -1)q
\]
where the first part decays due to (iii) and the second term is
quite similar. For the $\dbar$ part we write
\[
[\dbar,T^\mu] = \mu^{-2} (\dbar \chi)(\mu^{-2} x)
P_{[\mu^{-1},\mu]}
\]
which acts separately on each dyadic frequency. The $\dot
H^\frac12 \to \dot H^{-\frac12}$ norm of $ P_{[\mu^{-1},\mu]}$ is
$\mu$, which is more than compensated for by the $\mu^{-2}$
factor. This completes the proof of Lemma \ref{l:fixedk}
\end{proof}

The next lemma is related to the scale separation
properties:

\begin{lemma} \label{l:kj}
In the setting of Proposition~\ref{p:BG}, assume that $\mu_n \to
\infty$ slowly enough. Then  we have:
\begin{equation}\label{kj-orth}
\lim_{n \to \infty} \| |D|^{s_1}   T^{\mu_n}(\lambda_n^j, y_n^j)
|D|^{s_2}  T^{\mu_n} (\lambda_n^k, y_n^k)  |D|^{s_3} \|_{L^2 \to
L^2} = 0, \qquad j \neq k, \quad s_1+s_2+s_3 = 0
\end{equation}
and the similar result with either $T^{\mu_n}$ replaced by
$(T^{\mu_n})^*$.
\end{lemma}


\begin{proof}[Proof of Lemma \ref{l:kj}]
Denote
\[
Q^{jk} = |D|^{s_1}   T^{\mu_n}(\lambda_n^j, y_n^j) |D|^{s_2}
T^{\mu_n} (\lambda_n^k, y_n^k)  |D|^{s_3}.
\]
We consider the two scenarios in \eqref{orth1} and \eqref{orth2}.
In the first case, for large enough $n$ the operators
$T^{\mu_n}(\lambda_n^j, y_n^j)$ and $T^{\mu_n} (\lambda_n^k,
y_n^k)$ have disjoint frequency localizations so $Q^{jk} = 0$.

In the second case  we have $\lambda_n^k = \lambda_n^j:=
\lambda_n$. Further, all operators in $ Q^{jk} $ act separately on
different dyadic shells, so by orthogonality we can fix the input
frequency and insert dyadic frequency localizations in all
multipliers. Thus it suffices to consider operators of the form
\[
\tilde Q^{jk} = R_\lambda^1 \chi(\mu_n^{-2} \lambda_n (x-y_n^k))
R_\lambda^2 \chi(\mu_n^{-2} \lambda_n (x-y_n^j )) R_\lambda^3
\]
where $R_\lambda^i$ are smooth bounded multipliers localized at a
frequency $\lambda \in [\mu_n^{-1} \lambda_n, \mu_n \lambda_n]$
(which also depends on $n$). The condition $s_1+s_2 + s_3 =0$ guarantees
 that the Sobolev weights $|\xi_1|^{s_1} |\xi_2|^{s_2} |\xi_3|^{s_3}$ cancel out
if $|\xi_j| \approx \lambda$. This applies equally whether we work
with the operators $T^{\mu_n}(\lambda_n^j, y_n^j)$ or with their
adjoints. Set 
\[
\tilde \mu_n^{-1} \lambda= \mu_n^2\lambda_n, \qquad \tilde \mu_n    \in [\mu_n,\mu_n^3].
\]
We can rescale to set $\lambda = 1$, with  $y^k_n$ rescaled accordingly. Then $\tilde
Q^{jk}$ become
\[
\tilde Q^{jk} = R^1 \chi(\tilde \mu_n^{-1} (x-y_n^k)) R^2
\chi(\tilde \mu_n^{-1} (x-y_n^j)) R^3
\]
where $\tilde \mu_n    \in [\mu_n,\mu_n^3]$ goes to infinity slowly
enough,  so that also
\[
\tilde \mu_n^{-1}|  y_n^k - y_n^j| \to \infty.
\]
Then the operators $R^i$ have uniformly bounded Schwartz kernels,
so the desired conclusion follows from the spatial separation of
the two bump functions. This completes the proof of Lemma
\ref{l:kj}.
\end{proof}

It remains to use the two lemmas above in order to prove the three
properties (P1), (P2) and (P3).

\medskip

For (P1) it is easily seen that the operators $\sum_k T^{\mu_n}(\lambda_n^k,y_n^k)$ are bounded
in $\dot H^{-\frac12}$, uniformly for large $n$, therefore it remains to show 
that in the limit $f_n^k$ are almost orthogonal,
\[
\lim_{n \to \infty} \langle f_n^k, f_{n}^j \rangle_{\dot
H^{-\frac12}}  = 0, \qquad k \neq j
\]
To see this we write
\[
\langle f_n^k, f_{n}^j \rangle_{\dot H^{-\frac12}} = \langle
|D|^{-\frac12} Q^{kj} |D|^{-\frac12} f, f \rangle
\]
where
\[
Q^{kj} = |D|^{\frac12} (T^{\mu_n}(\lambda_n^k, y_n^k))^* |D|^{-1}
T^{\mu_n}(\lambda_n^j, y_n^j)  |D|^{\frac12}
\]
Then it suffices to show that
\[
\lim_{n \to \infty} \| Q^{kj} \|_{L^2 \to L^2} = 0
\]
which follows from \eqref{kj-orth}.

\medskip

Now we consider the property (P2), for which it suffices to show
that $u_n^k$ are almost orthogonal in the limit,
\[
\lim_{n \to \infty}  \langle u_n^k, u_n^j \rangle_{\dot H^\frac12}
= 0, \qquad k \neq j.
\]
Unfortunately $u_n^k$ no longer share the sharp localization of
$f_n^k$. However, the bulk of $u_n^k$ does. Precisely, we split
\begin{equation}\label{unk-decomp}
u_n^k =  T^{\mu_n^2}(\lambda_n^k, y_n^k) u_n^k +
(1-T^{\mu_n^2}(\lambda_n^k, y_n^k)) u_n^k.
\end{equation}
For the first term the same argument as the one used above for $f_n^k$ applies, except that
we need to use the operators
\[
\tilde {\tilde Q}^{kj} = |D|^{-\frac12} T^{\mu_n^2}(\lambda_n^k, y_n^k)^* |D|
T^{\mu_n^2}(\lambda_n^j, y_n^j)  |D|^{-\frac12}
\]
and show that
\[
\lim_{n \to \infty} \| \tilde {\tilde Q}^{kj} \|_{L^2 \to L^2} = 0.
\]
This again is a consequence of \eqref{kj-orth}.

The second term, on the other hand, converges to $0$. To see that
we compute
\[
L_{S(\lambda_n^k, y_n^k)q^k}  (1-T^{\mu_n^2}(\lambda_n^k, y_n^k))
u_n^k = (1-T^{\mu_n^2}(\lambda_n^k, y_n^k)) T^{\mu_n}(\lambda_n^k,
y_n^k) f - [ L_{S(\lambda_n^k, y_n^k)q^k}, T^{\mu_n^2}(\lambda_n^k,
y_n^k)] u_n^k
\]
where we want to show that both terms decay to zero in $\dot
H^{-\frac12}$. But this follows from \eqref{k-away} for the first
term, respectively \eqref{k-com} for the second.
\medskip

Finally we consider the last property (P3). First we use the same
decomposition \eqref{unk-decomp} as above for $u_n^k$ to write
\[
\begin{split}
S(\lambda_n^j, y_n^j) q^j  u_n^k = & \ S(\lambda_n^j, y_n^j) q^j
(1-  T^{\mu_n^2}(\lambda_n^k, y_n^k)) u_n^k + S(\lambda_n^j,
y_n^j) q^j  T^{\mu_n^2}(\lambda_n^j, y_n^j)
T^{\mu_n^2}(\lambda_n^k, y_n^k) u_n^k
\\ & \ +  S(\lambda_n^j, y_n^j) q^j  (1-T^{\mu_n^2}(\lambda_n^j, y_n^j))  T^{\mu_n^2}(\lambda_n^k, y_n^k)
u_n^k.
\end{split}
\]
The first term  decays to zero since its second factor $(1-  T^{\mu_n^2}(\lambda_n^k, y_n^k)) u_n^k$
decays to zero in $\dot H^\frac12$, as established above.  For the second we need
to show that
\[
\lim_{n \to \infty} \| |D|^{\frac12}   T^{\mu_n^2}(\lambda_n^j,
y_n^j) T^{\mu_n^2}(\lambda_n^k, y_n^k)  |D|^{-\frac12} \|_{L^2 \to
L^2} = 0, \qquad j \neq k
\]
which is a consequence of \eqref{kj-orth}. For the third we need
\begin{equation}
\lim_{n \to \infty} \| |D|^{-\frac12} S(\lambda_n^j, y_n^j) q^j
(1-   T^{\mu_n}(\lambda_n^j, y_n^j))  |D|^{-\frac12} \|_{L^2 \to
L^2} = 0.
\end{equation}
which follows from \eqref{k-q}.

Finally for the outer part we write
\[
u^{out}_n = \dbar^{-1} (1- T^{\mu_n}(\lambda_n^k, y_n^k))  f -
\sum_{j \neq k}   \dbar^{-1} T^{\mu_n}(\lambda_n^j, y_n^j)  f.
\]
The summand in the second term is nothing but  $u_n^j$ evaluated in the 
special case when $q_j = 0$.  Hence, it is covered by the prior analysis. 
For the first term we write
\[
\dbar^{-1} (1- T^{\mu_n}(\lambda_n^k, y_n^k)) f = (1-
T^{\mu_n}(\lambda_n^k, y_n^k)) \dbar^{-1} f + \dbar^{-1} [L_0,
T^{\mu_n}(\lambda_n^k, y_n^k)] \dbar^{-1} f.
\]
For the first term we get decay when matched against $q^k$, by
\eqref{k-q}. For the second we disregard $q^k$ and use instead the
commutator bound \eqref{k-com}. This completes the proof of
Proposition \ref{PropLimC}.
\end{proof}

\section{The Scattering Transform}\label{Section:Scattering}

The equations (\ref{DSIIdefoc}) arise as the compatibility condition
of the Lax pair
\begin{align}\label{LaxI}
\begin{cases}
\dzb m^1 &= \qT m^2\\
(\dz+ik)m^2 &= \ol \qT m^1
\end{cases}
\end{align}
and
\begin{align}\label{LaxII}
\begin{cases}
&i\partial_t m^1 + \dz^2 m^1 + 2ik\dz m^1 -\qT\dzb m^2 + \dzb \qT
m^2 + 4 gm^1=0\\
-&i\partial_t m^2 + \dzb^2 m^2 - ik\dz m^2 -\ol\qT\dz m^1 + \dz
\ol\qT m^1 + 4 \ol gm^2=0.
\end{cases}
\end{align}
The construction of the Scattering Transform only involves
solutions of the Dirac system (\ref{LaxI}). The equations
(\ref{LaxII}) are used afterwards, in order  to establish its time evolution
(\ref{tT_time}).

Assuming $\qT$ is a Schwartz function,  Beals and Coifman \cite{BC} studied Jost-type
solutions to (\ref{LaxI}) with boundary conditions
\begin{align}\label{bdry}
\begin{cases}
m^1\rightarrow 1 \text{ as }|z|\rightarrow \infty\\
m^2\rightarrow 0 \text{ as }|z|\rightarrow \infty.
\end{cases}
\end{align}
With the substitutions
\begin{align}\label{m1m2_def}
m_\pm = m^1\pm e_{-k}\ol{m^2}
\end{align}
they obtained the decoupled pseudo-analytic equations (\ref{eq_n})
which we introduced in Section \ref{Section:Intro}. They also
established the dual set of equations
\begin{align}\label{k_deriv_m}
\begin{cases}
\displaystyle \dkb m^1 = e_{-k}\tT\ol{m^2}\\
\displaystyle \dkb m^2 = e_{-k}\tT\ol{m^1}
\end{cases}
\end{align}
which is equivalent to (\ref{eq_m}).

Throughout this section  we will use both  $m^1$ and $m^2$ as well
as  the functions $m_\pm$ defined in Section \ref{Section:Intro}. We have
\begin{align}\label{m1m2}
\begin{split}
m^1 &= \half(m_++m_-) = \half (n_++n_-)\\
m^2 &= \half e_{-k}(\ol{m_+-m_-}) = \half(n_+-n_-).
\end{split}
\end{align}
For the Scattering Transform (\ref{tT}) we will also use the
expression
\begin{align}\label{tT_}
\mathcal S\qT(k) = -\frac{i}{\pi}\int_{\R^2} e_k(z) \ol{q(z)}
m^1(z,k) dz.
\end{align}

Our goal is to solve \eqref{eq_n} for $\qT\in L^2$ and show that
the corresponding scattering data $\tT$ is in $L^2$. To get
started we rewrite the equations \eqref{eq_n} in terms of the
functions $m_{\pm} -1$, which have the virtue that they decay at
infinity:
\[
\frac{\partial}{\partial \bar z} (m_{\pm} -1) =  \pm e_{-k} \qT
(\ol {m_{\pm}} -1) \pm   e_{-k} \qT.
\]
The $L^4$ solvability for equations of this type is considered in
the next lemma.

\begin{lemma}\label{perturb2}
Suppose $\qT\in L^2$. Then for any $f\in L^2$ and any $k\in \C$ such that $M\hat
f(k)<\infty$, there is a unique solution $u(\cdot,k)\in L^4$ of
\begin{align}\label{pseudo2}
\dzb u + e_{-k}\qT \ol u = e_{-k}f.
\end{align}
Moreover
\begin{align}\label{bound_u}
\|u(\cdot,k)\|_{L^4}\leq    C(\|\qT\|_{L^2}) \| \Icb(e_k\ol f)\|_{L^4} \leq
C(\|\qT\|_{L^2})\|f\|^\half_{L^2}\big(M\hat f(k)\big)^\half.
\end{align}
\end{lemma}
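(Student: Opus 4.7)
The plan is to split $u = \phi + w$, where $\phi := \Icb(e_{-k} f)$ absorbs the inhomogeneity in the simplest way and $w$ solves a $\dot H^\frac12 \to \dot H^{-\frac12}$ problem to which Theorem~\ref{ThmMain} applies. Observe that since $|e_{-k}|=1$ the shifted potential $\tilde q := e_{-k} q$ lies in $L^2$ with $\|\tilde q\|_{L^2} = \|q\|_{L^2}$, so all constants obtained from Theorem~\ref{ThmMain} will depend only on $\|q\|_{L^2}$ as claimed. This is the step where the uniformity in $q$ coming from the concentration-compactness argument of Section~\ref{Section:CC} is indispensable.

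For the explicit piece, Corollary~\ref{FI_}(b) gives
\[
\|\phi\|_{L^4} \lesssim \|f\|_{L^2}^{1/2} \bigl(M\hat f(k)\bigr)^{1/2},
\]
which is exactly the second inequality in \eqref{bound_u} and is the place where the hypothesis $M\hat f(k) < \infty$ enters. Substituting $u = \phi + w$ into \eqref{pseudo2} and using $\dzb \phi = e_{-k} f$ produces
\[
L_{\tilde q} w = \dzb w + \tilde q \bar w = -\tilde q \bar\phi,
\]
whose right-hand side lies in $L^{\frac43}$ by H\"older ($q \in L^2$, $\phi \in L^4$), and hence in $\dot H^{-\frac12}$ by the Sobolev embedding dual to $\dot H^\frac12 \hookrightarrow L^4$ on $\R^2$, with the estimate $\|\tilde q \bar\phi\|_{\dot H^{-\frac12}} \lesssim \|q\|_{L^2} \|\phi\|_{L^4}$.

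Theorem~\ref{ThmMain}, applied with potential $\tilde q$, then produces a unique $w \in \dot H^\frac12$ with $\|w\|_{\dot H^\frac12} \leq C(\|q\|_{L^2}) \|\phi\|_{L^4}$, and Sobolev embedding transfers this to the same bound for $\|w\|_{L^4}$, giving the first inequality in \eqref{bound_u}. For uniqueness in $L^4$ I would appeal to Lemma~\ref{perturb} applied to the operator $L_{\tilde q}$: any two $L^4$ solutions of \eqref{pseudo2} differ by an $L^4$ element of the kernel of $L_{\tilde q}$, which that lemma forces to vanish. The conceptual point is that the substitution $u = \phi + w$ cleanly separates a purely explicit oscillatory piece (controlled by Corollary~\ref{FI_}, whose job is precisely to capture the large-$k$ decay coming from the oscillation $e_{-k}$ without any smoothness on $q$) from a genuinely invertible problem in $\dot H^\frac12$ (controlled by Theorem~\ref{ThmMain}); no serious obstacle beyond invoking these two inputs arises.
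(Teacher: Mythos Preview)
Your proof is correct and follows essentially the same route as the paper's: the same splitting $u = \Icb(e_{-k}f) + v$, the same appeal to Corollary~\ref{FI_}(b) for the explicit piece, the same reduction of the remainder to an $L_{\tilde q}$ equation with $L^{4/3}\subset \dot H^{-1/2}$ right-hand side solved via Theorem~\ref{ThmMain}, and the same uniqueness argument via Lemma~\ref{perturb}. Your explicit remark that $\|\tilde q\|_{L^2}=\|q\|_{L^2}$ (so the constant from Theorem~\ref{ThmMain} depends only on $\|q\|_{L^2}$) is a point the paper leaves implicit.
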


\begin{proof}
 The uniqueness follows from Lemma~\ref{perturb}.  To prove existence, we
  first recall that in view of Corollary \ref{FI_}b),
  $\Icb(e_{-k}f)\in L^4$ for a.e. $k$. Write
\begin{align}\label{u_def}
u = v+\Icb(e_{-k}f).
\end{align}
Then $u$ is a solution of (\ref{pseudo2}) if and only if $v$
solves
\begin{align}\label{eq_v}
\dzb v + e_{-k}\qT\, \ol v = -e_{-k}\qT\Ic(e_k\ol f).
\end{align}
The term on the right is in $L^\frac43$ for a.e. $k$. More
precisely, by Corollary~\ref{FI_} we have
\begin{align*}
\|e_{-k}\qT\Ic(e_k\ol f)\|_{L^\frac43}\leq
c\|\qT\|_{L^2}\|f\|^\half_{L^2}\big(M\hat f(k)\big)^\half.
\end{align*}
Thus, by Theorem~\ref{ThmMain} there is a unique solution $v\in \dot
H^{\frac12} \subset L^4$ for (\ref{eq_v}). Hence the function  $u$
defined by (\ref{u_def}) solves (\ref{pseudo2}) and satisfies
(\ref{bound_u}).
\end{proof}

We are now ready to construct the Jost solutions $m_\pm$ for (\ref{eq_n}):

\begin{lemma}(Jost Solutions)\label{l:Jost}
Suppose that $\qT\in L^2$, then:
\medskip

\noindent a) For almost every $k$ there exist unique solutions $m_\pm(z,k)$ of (\ref{eq_n}) with
$m_\pm(\cdot,k)-1\in L^4$ and moreover,
\begin{align}\label{L4_n}
\|m_\pm(\cdot,k)-1\|_{L^4}  + \|m^1(\cdot,k)-1\|_{L^4}  + \|m^2(\cdot,k)\|_{L^4}  &
\leq C(\|\qT\|_{L^2})\big(M\hat\qT(k)\big)^\half.
\end{align}
In addition we have
\begin{align}\label{L4_zk_n}
\|m_\pm-1\|_{L^4_kL^4_z} +  \|m^1-1\|_{L^4_kL^4_z}
+\|m^2\|_{L^4_kL^4_z}  \leq C(\|\qT\|_{L^2}),
\end{align}
as well as
\begin{align}\label{L43L4}
\| \dbar m^1(\cdot,k) \|_{L^{\frac43}} \leq
C(\|\qT\|_{L^2})\big(M\hat\qT(k)\big)^\half.
\end{align}
\noindent b) The maps $q \to m_{\pm}$, $q \to m^{1}$ and $q \to
m^{2}$ are locally Lipschitz from $L^2$ into the topologies in
\eqref{L4_zk_n}, \eqref{L43L4}. Precisely, given $\qT_1$ and
$\qT_2$ in $L^2$ we have the difference bounds
\begin{align}\label{L4_zk_n-diff}
\|\delta m_\pm\|_{L^4_kL^4_z} +  \|\delta m^1\|_{L^4_kL^4_z}
+\|\delta m^2\|_{L^4_kL^4_z}  \leq
C(\|\qT_1\|_{L^2})C(\|\qT_2\|_{L^2}) \| \delta \qT\|_{L^2}
\end{align}
as well as
\begin{align}\label{L43L4-diff}
\| \dbar \delta m^1 \|_{L^4_k L^{\frac43}_z} \leq
C(\|\qT_1\|_{L^2})C(\|\qT_2\|_{L^2}) \| \delta \qT\|_{L^2}.
\end{align}
\end{lemma}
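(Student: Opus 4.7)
My approach is to reduce everything to Lemma~\ref{perturb2} (for existence, uniqueness, and the pointwise-in-$k$ bounds) together with Theorem~\ref{ThmMain} (for the difference estimates in part b)), and then to pass from the pointwise-$k$ bounds to the joint $L^4_k L^4_z$ bounds using only the $L^2\to L^2$ boundedness of the Hardy--Littlewood maximal function.

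Setting $u = m_\pm - 1$, the defining equation $\dzb m_\pm = \pm e_{-k}\qT\,\overline{m_\pm}$ becomes
\[
\dzb u + e_{-k}(\mp \qT)\,\bar u = e_{-k}(\pm \qT),
\]
which is precisely of the form handled by Lemma~\ref{perturb2}, with potential $\mp\qT \in L^2$ and source $\pm \qT$. Since $\hat \qT \in L^2$, $M\hat \qT(k)<\infty$ for a.e.~$k$, so that lemma produces a unique $m_\pm(\cdot,k)-1 \in L^4_z$ with
\[
\|m_\pm(\cdot,k)-1\|_{L^4_z} \leq C(\|\qT\|_{L^2})\,(M\hat \qT(k))^{1/2}.
\]
The analogous bounds on $m^1$ and $m^2$, recovered from $m_\pm$ via \eqref{m1m2}, follow by linear combination and $|e_{\pm k}|=1$; this establishes \eqref{L4_n}. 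For the joint bound \eqref{L4_zk_n}, raise the pointwise inequality to the fourth power and integrate in $k$; the $L^2\to L^2$ bound on $M$ yields
\[
\|(M\hat \qT)^{1/2}\|_{L^4_k}^4 = \|M\hat \qT\|_{L^2}^2 \lesssim \|\hat \qT\|_{L^2}^2 \lesssim \|\qT\|_{L^2}^2.
\]
For \eqref{L43L4}, the Lax system \eqref{LaxI} gives $\dzb m^1 = \qT\,m^2$ (equivalently, this follows by averaging the two $m_\pm$ equations), so H\"older in $z$ reduces it to the $L^4_z$ bound on $m^2$ already obtained.

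For part (b), subtracting the equations for $\qT_1$ and $\qT_2$, the difference $\delta m_\pm$ satisfies
\[
\dzb\,\delta m_\pm \mp e_{-k}\,\qT_1\,\overline{\delta m_\pm} = \pm e_{-k}(\delta \qT)\,\overline{m_\pm[\qT_2]}.
\]
I would split the forcing by writing $m_\pm[\qT_2] = 1 + (m_\pm[\qT_2]-1)$. The first piece $\pm e_{-k}\,\delta \qT$ falls directly under Lemma~\ref{perturb2} with source $\pm \delta \qT$. The second piece $\pm e_{-k}(\delta \qT)\,\overline{(m_\pm[\qT_2]-1)}$ lies in $L^{4/3}_z$ by H\"older, hence in $\dot H^{-1/2}_z$ by the two-dimensional Sobolev embedding $L^{4/3}\hookrightarrow \dot H^{-1/2}$ (dual to $\dot H^{1/2}\hookrightarrow L^4$); Theorem~\ref{ThmMain} applied with $L^2$-potential $\mp e_{-k}\qT_1$ then produces the corresponding piece of $\delta m_\pm$ in $\dot H^{1/2}_z \hookrightarrow L^4_z$, with constant depending only on $\|\qT_1\|_{L^2}$. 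Adding the two contributions and taking $L^4_k$ as in part (a) yields \eqref{L4_zk_n-diff}. Finally, \eqref{L43L4-diff} follows from $\dzb m^1 = \qT m^2$ by expanding $\delta(\qT m^2) = (\delta \qT)\,m^2[\qT_2] + \qT_1\,\delta m^2$, applying H\"older, and using the joint $L^4_kL^4_z$ bounds just proved.

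The main obstacle is the $k$-dependence of the potential: the operator $L_{\mp e_{-k}\qT}$ varies with $k$, so one must verify that all constants from Theorem~\ref{ThmMain} and Lemma~\ref{perturb2} depend only on $\|e_{-k}\qT\|_{L^2}=\|\qT\|_{L^2}$ (which they do, since $|e_{-k}|=1$), and that the joint integrability in $(z,k)$ is delivered entirely by the $L^2$-boundedness of $M$ applied to $\hat \qT$ and $\widehat{\delta \qT}$, rather than by any smoothness of $\qT$.
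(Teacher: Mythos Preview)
Your argument is correct and follows essentially the same route as the paper. Part (a) is identical: reduce to Lemma~\ref{perturb2}, pass to $m^1,m^2$ via \eqref{m1m2}, integrate in $k$ using the $L^2$ boundedness of $M$, and read off \eqref{L43L4} from $\dzb m^1=\qT m^2$. For part (b) the paper simply says ``repeat the same arguments for differences,'' and your splitting of the forcing $\pm e_{-k}(\delta\qT)\,\overline{m_\pm[\qT_2]}$ into the constant part (handled by Lemma~\ref{perturb2} with source $\pm\delta\qT$) and the $L^{4/3}$ part (handled by Theorem~\ref{ThmMain} via $L^{4/3}\hookrightarrow\dot H^{-1/2}$) is exactly the natural way to execute that instruction; the uniqueness in $L^4$ from Lemma~\ref{perturb} guarantees the two pieces add up to $\delta m_\pm$. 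Your closing remark about the $k$-uniformity of the constants is well placed and is indeed the only point requiring care.
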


\begin{remark}
If we use the first part of \eqref{bound_u} in the proof below  then we obtain the more refined bound
\begin{equation}\label{better-m}
 \|m^1-1\|_{L^4_kL^4_z} +\|m^2\|_{L^4_kL^4_z} + \| \dbar m^1 \|_{L^4_k L^{\frac43}_z} \leq C(\|\qT\|_{L^2})
 \|  \Icb(e_k\ol \qT)\|_{L^4_kL^4_z}.
\end{equation}
\end{remark}

\begin{proof}
a) We define
\begin{align}\label{eq_r}
r_\pm(\cdot,k)=m_\pm(\cdot,k)-1.
\end{align}
Then $m_\pm$ solve (\ref{eq_n}) if and only if $r_\pm$ solve
\begin{align}\label{diff_eq_r}
\dzb r_\pm = \pm e_{-k}\qT\ol {r_\pm} \pm e_{-k}\qT
\end{align}
and so by Lemma \ref{perturb2} there exist unique solutions to
(\ref{diff_eq_r}) with
\begin{align}\label{r_est}
\|r_\pm(\cdot,k)\|_{L^4}&\leq C(\|\qT\|_{L^2})\big(M\hat\qT(k)\big)^\half.
\end{align}
Now the bound (\ref{L4_n}) for $m_\pm$, $m^{1}$ and $m^{2}$
follows from (\ref{r_est}) and (\ref{eq_r}).

The inequality (\ref{L4_zk_n}) then follows by integrating
(\ref{L4_n}) in $k$ and using the mapping property $M:
L^2\rightarrow L^2$. Finally, for \eqref{L43L4} we use the first
equation in \eqref{LaxI} combined with the $m^2$ bound in
\eqref{L4_zk_n}. This completes the proof of a).

Part (b) is easily obtained by repeating the same arguments in
part a) for differences of Jost functions. The details are left
for the reader.
\end{proof}

Next we turn our attention to the scattering transform $\tT$ of $\qT$, which is defined by
\eqref{tT_}.

\begin{lemma}\label{l:ST}
The scattering transform $\tT(k)$ is well defined for a.e. $k$ in
$\C$ and satisfies
\begin{align}\label{tT-L2}
\|\tT\|_{L^2}\leq C(\|\qT\|_{L^2})
\end{align}
as well as the pointwise bound
\begin{align}\label{tT-point}
|\tT(k)|\leq C(\|\qT\|_{L^2}) M\hat\qT(k).
\end{align}
\end{lemma}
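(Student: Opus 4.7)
The first step is to split the Scattering Transform into a ``linear'' part and a correction. Writing $m^1 = 1 + (m^1-1)$ in \eqref{tT_} yields, at least formally,
\begin{equation*}
\tT(k) = \overline{\hat\qT(k)} + R(k), \qquad R(k) := -\frac{i}{\pi}\int_{\R^2} e_k(z)\,\overline{\qT(z)}\,(m^1(z,k)-1)\,dz,
\end{equation*}
where the identification of the first term with $\overline{\hat\qT(k)}$ uses the Fourier convention \eqref{Four}. The leading term already satisfies both conclusions: by Plancherel it lies in $L^2$ with $\|\overline{\hat\qT}\|_{L^2} \lesssim \|\qT\|_{L^2}$, and by the Lebesgue differentiation theorem $|\overline{\hat\qT(k)}|\leq M\hat\qT(k)$ a.e. Thus everything reduces to controlling the correction $R(k)$.

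To handle $R(k)$, I exploit the first Lax pair equation $\dbar m^1 = \qT m^2$, which gives $m^1 - 1 = \Icb(\qT m^2)$, so
\begin{equation*}
R(k) = -\frac{i}{\pi}\int e_k\,\overline{\qT}\,\Icb(\qT m^2)\,dz.
\end{equation*}
The key trick is to transfer the $\Icb$ via the duality identity $\int \Icb(f)\,\overline{g}\,dA = \int f\,\overline{\Ic g}\,dA$, which follows from the explicit kernel $\frac{1}{\pi(z-w)}$ for $\Icb$. Using $\overline{\Ic(e_{-k}\qT)} = \Icb(e_k \overline{\qT})$, this yields the crucial representation
\begin{equation*}
R(k) = -\frac{i}{\pi}\int \qT\,m^2(\cdot,k)\,\Icb\!\bigl(e_k\overline{\qT}\bigr)\,dz.
\end{equation*}
This integrand is now the product of three factors in $L^2$, $L^4$, $L^4$ respectively, so it is absolutely convergent for a.e.\ $k$, and this formula serves as the definition of $R(k)$ (hence of $\tT(k)$) at those points. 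This gives the a.e.\ well-definedness.

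The pointwise estimate is now immediate from H\"older and our previous bounds. By Lemma~\ref{l:Jost} we have $\|m^2(\cdot,k)\|_{L^4}\leq C(\|\qT\|_{L^2})(M\hat\qT(k))^{1/2}$, and by Corollary~\ref{FI_}(b), applied with $\qT$ replaced by $\overline\qT$ and $k$ by $-k$ (noting $|\widehat{\overline\qT}(-k)|=|\hat\qT(k)|$ so their maximal functions agree at the conjugate points),
\begin{equation*}
\bigl\|\Icb\!\bigl(e_k\overline{\qT}\bigr)\bigr\|_{L^4} \lesssim \|\qT\|_{L^2}^{1/2}\bigl(M\hat\qT(k)\bigr)^{1/2}.
\end{equation*}
H\"older with exponents $(2,4,4)$ then gives $|R(k)| \leq C(\|\qT\|_{L^2})\, M\hat\qT(k)$, which combined with $|\overline{\hat\qT(k)}|\leq M\hat\qT(k)$ establishes \eqref{tT-point}. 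The $L^2$ bound \eqref{tT-L2} follows upon integrating this pointwise estimate and invoking the $L^2$-boundedness of the Hardy--Littlewood maximal operator together with Plancherel, $\|M\hat\qT\|_{L^2}\lesssim \|\hat\qT\|_{L^2}\lesssim \|\qT\|_{L^2}$. The main technical point was finding the duality rewriting of $R$: a direct H\"older bound on the original expression for $R(k)$ fails because $\qT(m^1-1)\in L^{4/3}$ only, whereas the dual form pairs the $L^2$ factor $\qT$ against two well-behaved $L^4$ factors, each of which contributes the necessary half-power of $M\hat\qT(k)$.
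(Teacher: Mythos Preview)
Your proof is correct, but it takes a different route from the paper's. The paper treats the correction term $R(k)=-\frac{i}{\pi}\int e_k\ol\qT\,(m^1-1)\,dz$ as a pseudo-differential operator with non-smooth symbol $a(k,z)=m^1(z,k)-1$ acting on $\ol{\hat\qT}$, and invokes the general $L^2$-boundedness result Theorem~\ref{ThmPsiDO} (with $n=2$, $\alpha=1$); the hypotheses are verified via \eqref{L4_zk_n} and \eqref{L43L4}, and the pointwise bound \eqref{PointwiseBound} gives \eqref{tT-point} directly. Your argument instead substitutes $m^1-1=\Icb(\qT m^2)$, transfers $\Icb$ by duality onto $e_k\ol\qT$, and applies H\"older with exponents $(2,4,4)$ together with Corollary~\ref{FI_} and the bound on $\|m^2(\cdot,k)\|_{L^4}$ from Lemma~\ref{l:Jost}.

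The two arguments are closely related in spirit: the proof of Theorem~\ref{ThmPsiDO} itself works by writing the symbol as a fractional integral and moving that fractional integral onto the oscillatory factor, which is exactly your integration-by-parts step specialized to $\Icb$ in place of $(-\Delta)^{-1/2}$ (these differ only by a Riesz transform). What your approach buys is directness --- you avoid the general pseudo-differential framework entirely for this lemma. What the paper's approach buys is reusability: the same Theorem~\ref{ThmPsiDO} is invoked again in Lemma~\ref{lem_iso} to handle the terms $P_2,P_3$ in the difference formula, so packaging the mechanism once pays off. A minor remark: the duality identity you state carries a sign (the kernel $\frac{1}{\pi(z-w)}$ is antisymmetric), but this is of course irrelevant to the estimates.
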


\begin{remark}
Using the slightly stronger bound \eqref{better-m} in the proof
below yields the following slight improvement over
\eqref{tT-point}:
\begin{align}\label{tT-point++}
\| \tT - \hat\qT(k)\|_{L^2} \leq C(\|\qT\|_{L^2})  \|  \Icb(e_k\ol
\qT)\|_{L^4}
\end{align}
This will be useful later on in order to provide a self-contained proof
of the characterization of the wave operators for the DSII problem.
\end{remark}
\begin{proof}
We write $\tT(k)$ in the form
\begin{align}\label{eq_tT}
i\tT(k)=\frac{1}{\pi}\int e_k\ol \qT \, dz + \frac{1}{\pi}\int e_k\ol
\qT (m^1-1)\, dz.
\end{align}
The first term is simply the Fourier transform of $\ol\qT\in L^2$
which obeys \eqref{tT-L2} and \eqref{tT-point}. For the second
term, we apply Theorem \ref{ThmPsiDO} with $n=2$ and $\alpha =1$
for the symbol $m^1(z,k)-1$, and $f=\ol{\hat q}$ (so that $\hat
f=\ol q$), $k$ playing the role of $x$ and $z$ playing the role of
$\xi$. Hypothesis i) of Theorem \ref{ThmPsiDO} is satisfied by
(\ref{L4_zk_n}). To see that hypothesis ii) is justified, recall from \eqref{L43L4} that
\begin{align*}
\| \dbar m^1(\cdot,k) \|_{L^{\frac43}} \leq
C(\|\qT\|_{L^2})\big(M\hat\qT(k)\big)^\half.
\end{align*}
Hence,
\begin{align*}
\| \dbar (m^1-1) \|_{L^4_kL^{\frac43}_z} \leq
C(\|\qT\|_{L^2})\|M\hat\qT(k)\|_{L^2}^\half \leq C(\|\qT\|_{L^2}).
\end{align*}
Thus, hypothesis ii) holds by the boundedness of the Beurling transform $\dzb\Ic$ on $L^p$ for $1<p<\infty$.

It follows that $\tT$ is well defined  and is in $L^2$. In
addition, from (\ref{PointwiseBound}),
\begin{align}\label{FS_bound}
|\tT(k)|&\leq C(\|\qT\|_{L^2})(M\hat \qT(k))^\half\| \dbar m^1(\cdot,k)\|_{L^\frac43}\|\qT\|^\half_{L^2} \leq C(\|\qT\|_{L^2})M\hat
\qT(k).
\end{align}
\end{proof}

So far we have constructed the Scattering Transform $\mathcal S
\qT$ for a fixed $\qT \in L^2$. Our next goal is to establish that
$\mathcal S $ is a locally Lipschitz map. One can already view
this as a consequence of the locally Lipschitz property for the
Jost functions in Lemma~\ref{l:Jost}, but the next lemma provides elegant difference formulas from which we will also obtain additional  properties of the Scattering Transform and its derivative. These formulas (more precisely their consequences stated as Lemma \ref{LemDiffeom} a) and c)) are generalizations of facts proved in \cite{BC} on tangent maps for potentials in Schwartz space and extended in \cite{Per} and \cite{SunII}.

We'll denote by $\langle\cdot,\cdot\rangle$ the usual inner product
on $L^2$:
\begin{align*}
\langle f,g\rangle = \int \ol f g.
\end{align*}

\begin{lemma}\label{lem_iso} (Difference Formulas)
a) Given any two potentials $\qT_1$ and $\qT_2$ in $ L^2(\R^2)$ with scattering transforms $\tT_1$, respectively  $\tT_2$, we have:
\begin{align}\label{tT_Delta}
\tT_1-\tT_2 = T_{\qT_1,\qT_2}( \qT_1 - \qT_2)
\end{align}
where the linear operator $T_{\qT_1,\qT_2}$ is given by
\begin{align}\label{T_def}
T_{\qT_1,\qT_2} f (k)  = -\frac{i}{\pi}\Big(  \int  e_k(z)
\ol{f(z)}a(z,k)\dnuz -  \int
e_k(z){f(z)}b(z,k)\dnuz\Big)
\end{align}
with
\begin{align}
a(z,k) &= \ol{m^1_{\ol{\qT_2}}(z,-k)}m^1_{\qT_1}(z,k)\\
b(z,k) &= \ol{m^2_{\ol{\qT_2}}(z,-k)}m^2_{\qT_1}(z,k).
\end{align}
The integrals are well defined for $f \in L^2$ and
\begin{align}\label{L2_diff}
\| T_{\qT_1,\qT_2}\|_{L^2 \to L^2}  \leq
C(\|\qT_1\|_{L^2})C(\|\qT_2\|_{L^2}).
\end{align}
b) With the same functions $a(z,k)$ and $b(z,k)$ (defined in terms of $\qT_1$ and $\qT_2$) as above we also have
\begin{align}\label{qT_Delta}
\qT_1-\qT_2 = W_{\qT_1,\qT_2}( \tT_1 - \tT_2)
\end{align}
where the linear operator $W_{\qT_1,\qT_2}$ is defined as:
\begin{align}\label{W_def}
W_{\qT_1,\qT_2} g (z)  = -\frac{i}{\pi}\Big(  \int  e_k(z)
\ol{g(k)}a(z,k)\dnuk -  \int
e_{-k}(z){g(k)}\ol{b(z,k)}\dnuk\Big).
\end{align}
The integrals are well defined for $f \in L^2$ and
\begin{align}\label{L2_diff_k}
\| W_{\qT_1,\qT_2}\|_{L^2 \to L^2}  \leq
C(\|\qT_1\|_{L^2})C(\|\qT_2\|_{L^2}).
\end{align}
c) For the operators $T_{\qT_1,\qT_2}$ and $W_{\qT_1,\qT_2}$ we have the identity:
\begin{align}\label{pre_quasi_isom}
\langle g,T_{\qT_1,\qT_2}f\rangle - \langle f,W_{\qT_1,\qT_2}g\rangle = \Im\langle g,\tilde T_{\qT_1,\qT_2}f\rangle,
\end{align}
valid for any $f,g,\qT_1,\qT_2\in L^2$, where
\begin{align}\label{T_tilde}
\tilde T_{\qT_1,\qT_2}f(k) = -\frac{2}{\pi}\int e_k(z)f(z) b(z,k)dz.
\end{align}
\end{lemma}
\bigskip
\begin{proof}
a) First we will prove (\ref{tT_Delta}) formally. We will then show
that the integral exists in $L^2$ and prove the bound 
(\ref{L2_diff}).

From the definition (\ref{tT_}), we have
\begin{align*}
\tT_1 - \tT_2 &= -\frac{i}{\pi}\Big(\int
e_k(\ol{\qT_1}-\ol{\qT_2})m^1_{\qT_1}dz + \int
e_k\ol{\qT_2}(m^1_{\qT_1}-m^1_{\qT_2})dz\Big)
\end{align*}
where $m^1_{q_i}$ solve $(\ref{LaxI})$ with boundary conditions
(\ref{bdry}), or in integral form
\begin{align*}
m^1_{\qT_i}(\cdot,k) - 1 = (I-\mathcal A_{\qT_i,k})^{-1}\mathcal
A_{\qT_i,k}(1)
\end{align*}
where
\begin{align*}
\mathcal A_{\qT,k}(\cdot) = \Icb(e_{-k}\qT\Ic(e_{k}\ol\qT\, \cdot)).
\end{align*}
For the second term, we have by the resolvent identity
\begin{align*}
m^1_{\qT_1}-m^1_{\qT_2} &= [(I-\mathcal A_{\qT_1,k})^{-1} -
(I-\mathcal A_{\qT_2,k})^{-1}]\mathcal A_{\qT_1,k}1 + (I-\mathcal
A_{\qT_2,k})^{-1}(\mathcal A_{\qT_1,k}1 - \mathcal A_{\qT_2,k}1)\\
&= (I-\mathcal A_{\qT_2,k})^{-1}\{[(I-\mathcal A_{\qT_2,k}) -
(I-\mathcal A_{\qT_1,k})](I-\mathcal A_{\qT_1,k})^{-1}\mathcal
A_{\qT_1,k}1 + \mathcal A_{\qT_1,k}1 - \mathcal A_{\qT_2,k}1\}\\
&= (I-\mathcal A_{\qT_2,k})^{-1}(\mathcal A_{\qT_1,k} - \mathcal
A_{\qT_2,k})m^1_{\qT_1}\\
&= (I-\mathcal A_{\qT_2,k})^{-1}(\mathcal D_1 + \mathcal D_2)
\end{align*}
where
\begin{align*}
\mathcal D_1 &=
\Icb(e_{-k}\qT_2\Ic(e_k(\ol{\qT_1}-\ol{\qT_2})m^1_{\qT_1}))\\
\mathcal D_2 &=
\Icb(e_{-k}(\qT_1-\qT_2)\Ic(e_k\ol{\qT_1}m^1_{\qT_1})).
\end{align*}
Then
\begin{align*}
\int e_k\ol{\qT_2}(m^1_{\qT_1}-m^1_{\qT_2})dz  &= \langle
(I-\mathcal A^*_{\qT_2,k})^{-1}e_{-k}\qT_2,\mathcal D_1 + \mathcal
D_2\rangle =\langle e_{-k}\qT_2m^1_{\ol{\qT_2}}(\cdot,-k),\mathcal
D_1 + \mathcal D_2 \rangle.
\end{align*}
Now,
\begin{align*} \langle
e_{-k}\qT_2m^1_{\ol{\qT_2}}(\cdot,-k),\mathcal D_1\rangle
&=\langle
\mathcal A_{\ol{\qT_2},-k}m^1_{\ol{\qT_2}}(\cdot,-k),e_k(\ol{\qT_1}-\ol{\qT_2})m^1_{\qT_1}
\rangle\\
&= \int
e_k(z)\ol{m^1_{\ol{\qT_2}}(z,-k)}\,\ol{(\qT_1(z)-\qT_2(z))}m^1_{\qT_1}(z,k)\dnuz\\
& - \int e_k(z)\ol{(\qT_1(z)-\qT_2(z))}m^1_{\qT_1}(z,k)\dnuz.
\end{align*}
In addition,
\begin{align*}
\langle e_{-k}\qT_2m^1_{\ol{\qT_2}}(\cdot,-k),\mathcal D_2\rangle
&= - \langle \Ic (e_{-k}\qT_2m^1_{\ol{\qT_2}}(\cdot,-k)),e_{-k}(\qT_1-\qT_2)\Ic(e_k\ol{\qT_1}m^1_{\qT_1})\rangle\\
&=  -\int e_k(z) \ol {m^2_{\ol{\qT_2}}(z,-k)} (\qT_1(z) -
\qT_2(z)) {m^2_{\qT_1}(z,k)}\dnuz.
\end{align*}
Combining the terms, we obtain (\ref{tT_Delta}). To prove
(\ref{L2_diff}), we write
\begin{align*}
T_{\qT_1,\qT_2} f  = -\frac{i}{\pi}\Big(P_1{f}(k) +
P_2{f}(k) + P_3{f}(k) + P_4{f}(k)
+ P_5{f}(k)\Big)
\end{align*}
where
\begin{align*}
P_1{f}(k) &= \int e_z\ol{f}(\ol{
m^1_{\ol{\qT_2}}}-1)({m^1_{\qT_1}}-1)dz\\
P_2{f}(k) &=\int e_z\ol{f}(\ol{ m^1_{\ol{\qT_2}}}-1)dz\\
P_3{f}(k) &=\int e_z\ol{f}({m^1_{\qT_1}}-1)dz\\
P_4{f}(k) &=\int e_z\ol{f}dz\\
P_5{f}(k) &=-\int e_{-z}
{f}\ol{m^2_{\ol{\qT_2}}}{m^2_{\qT_1}}dz.
\end{align*}
For the term $P_5$, we have by Lemma~\ref{l:Jost}
\begin{align}\label{sigma+}\nonumber\|m^2_{\ol{\qT_2}}(\cdot,-k)m^2_{{\qT_1}}(\cdot,k)\|_{L^2}
&\leq \|m^2_{\ol{\qT_2}}(\cdot,-k)\|_{L^4}
\|m^2_{{\qT_1}}(\cdot,k)\|_{L^4}\\&\leq
C(\|\qT_2\|_{L^2})\big(M\widehat{\ol{\qT_2}}(-k)\big)^\half
C(\|\qT_1\|_{L^2})\big(M\widehat{{\qT_1}}(k)\big)^\half
\end{align}
Hence,
\begin{align*}
\|P_5{f}\|_{L^2} &\leq
\Big(\int\|m^2_{\ol{\qT_2}}(\cdot,-k)m^2_{{\qT_1}}(\cdot,k)\|^2_{L^2}dk\Big)^\half\|{f}\|_{L^2}\\
&\leq C(\|\qT_1\|_{L^2})C(\|\qT_2\|_{L^2})
\|M\widehat{\ol{\qT_2}}(-\,\cdot)\|_{L^2}^\half\|M\widehat{{\qT_1}}(\cdot)\|_{L^2}
^\half\|{f}\|_{L^2}\\
&\leq C(\|\qT_1\|_{L^2})C(\|\qT_2\|_{L^2})\|{f}\|_{L^2}
\end{align*}
where the last inequality follows from the mapping property of the
maximal function $M: L^2\rightarrow L^2$. In similar fashion, we
obtain
\begin{align*}
\|P_1{f}\|_{L^2} &\leq
C(\|\qT_1\|_{L^2})C(\|\qT_2\|_{L^2})\|{f}\|_{L^2}.
\end{align*}
To investigate the term $P_3{f}$, we will  apply Theorem
\ref{ThmPsiDO} with $n=2$ and $\alpha =1$ for the symbol
$m^1_{\qT_1}(z,k)-1$, and $\ol{f}$ replacing $\hat{f}$ in the Theorem, $k$ playing the role of $x$ and
$z$ playing the role of $\xi$. The hypotheses of the theorem are
satisfied in view of the bounds \eqref{L4_zk_n} and \eqref{L43L4}
in Lemma~\ref{l:Jost}.  It thus follows from (\ref{PointwiseBound}), that
\begin{align*}
|P_3{f}(k)|&\leq C(\|\qT_1\|_{L^2}) (M\widehat f(k))^\half\|\dzb (m^1_{\qT_1}-1)\|_{L^\frac43_z}\|{f}\|^\half_{L^2}\\
&\leq C(\|\qT_1\|_{L^2}) (M\widehat {f}(k))^\half(M\hat
\qT_1(k))^\half\|{f}\|^\half_{L^2}.
\end{align*}
Hence
\begin{align*}
\|P_3{f}\|_{L^2} &\leq C(\|\qT_1\|_{L^2})
\Big(\int\big|M\widehat{f}(k)M\widehat{\qT_1}(k)\big|dk\Big)^\frac12\|{f}\|_{L^2}^\half\\
&\leq C(\|\qT_1\|_{L^2})\|M\widehat{f}\|^\half_{L^2}\|M\widehat{\qT_1}\|^\half_{L^2}\|{f}\|_{L^2}^\half\\
&\leq C(\|\qT_1\|_{L^2})\|{f}\|_{L^2}.
\end{align*}
Likewise, for $P_2$ we have
\begin{align*}
\|P_2{f}\|_{L^2} \leq C(\|\qT_2\|_{L^2})\|{f}\|_{L^2}.
\end{align*}
Combining the four terms, we obtain (\ref{L2_diff}).

b) Because of the symmetry between the forward and inverse scattering, (\ref{tT_Delta}) also yields:
\begin{align}\label{diff_q1_q2}
\qT_1(z) - \qT_2(z) = -\frac{i}{\pi}\Big(&\int
e_k(z)\ol{m^1_{\ol{\tT_2}}(-z,k)}\,\ol{(\tT_1(k) -
	\tT_2(k))}m^1_{\tT_1}(z,k)\dnuk
- \\
\nonumber &\int e_k(z)\ol{m^2_{\ol{\tT_2}}(-z,k)}{(\tT_1(k) -
	\tT_2(k))}m^2_{\tT_1}(z,k)\dnuk\Big).
\end{align}
where, by (\ref{m1m2})
\begin{align}
\begin{split}
m^1_{\tT} &= \half(n_{+,\tT}+n_{-,\tT})=m^1_{\qT}\\
m^2_{\tT} &=\half e_{-k}(\ol{n_{+,\tT}-n_{-,\tT}}) = e_{-k}\ol {m^2_{\qT}}
\end{split}
\end{align}

Now, consider the scattering problem for  $\widetilde{\qT_2} =: \ol{\qT_2}$.  For the corresponding scattering transform we have (cf. \cite{BC}): 
\begin{align*}
\widetilde{\tT_2}(k) = -\ol{\tT_2(-k)}
\end{align*}
Hence,
\begin{align*}
\dkb{n_{\pm,{\widetilde{\tT_2}}}}(z,-k) &= \pm e_{k}(z)\ol{\tT_2(k)}\,\ol{n_{\pm,{\widetilde{\tT_2}}}(z,-k)}.
\end{align*}
Comparing with the corresponding equations for $\ol{\tT_2}$ we conclude from the
uniqueness in Lemma \ref{l:Jost} that
\begin{align*}
n_{\pm,{\ol{\tT_2}}}(-z,k) = n_{\pm,{\widetilde{\tT_2}}}(z,-k).
\end{align*}
It follows (using (\ref{m1m2})) that
\begin{align*}
&{m^1_{\ol{\tT_2}}(-z,k)}={m^1_{\widetilde{\tT_2}}(z,-k)} = {m^1_{\ol{\qT_2}}(z,-k)}\\
&\ol{m^2_{\ol{\tT_2}}(-z,k)} = \ol{m^2_{\widetilde{\tT_2}}(z,-k)} = e_{-k}(z){m^2_{\ol{\qT_2}}(z,-k)}\\
\end{align*}
Substituting in (\ref{diff_q1_q2}) we obtain: 
\begin{align*}
\qT_1(z) - \qT_2(z) = -\frac{i}{\pi}\Big(&\int
e_k(z)\ol{m^1_{\ol{\qT_2}}(z,-k)}\,\ol{(\tT_1(k) -
	\tT_2(k))}m^1_{\qT_1}(z,k)\dnuk
-\\
&\int e_{-k}(z){m^2_{\ol{\qT_2}}(z,-k)}{(\tT_1(k) -
	\tT_2(k))}\ol{m^2_{\qT_1}(z,k)}\dnuk\Big)
\end{align*}
which proves (\ref{qT_Delta}). The bound (\ref{L2_diff_k}) is obtained in similar fashion to (\ref{L2_diff}). 
\goodbreak
c) Since $T_{\qT_1,\qT_2}$, $W_{\qT_1,\qT_2}$ and $\tilde T_{\qT_1,\qT_2}$ are bounded oerators on $L^2$, it suffices to prove (\ref{pre_quasi_isom}) for $f,g$ of compact support. In view of (\ref{T_def} and (\ref{W_def}) we have
\begin{align*}
\langle g,T_{\qT_1,\qT_2}f\rangle - \langle f,W_{\qT_1,\qT_2}g\rangle  
=& -\frac{i}{\pi}\int\int\ol{g(k)}\Big(e_k(z)\ol{f(z)}a(z,k)-e_k(z)f(z)b(z,k)\Big) dzdk\\
& +\frac{i}{\pi}\int\int\ol{f(k)}\Big(e_k(z)\ol{g(z)}a(z,k)-e_{-k}(z)g(z)\ol{b(z,k)}\Big) dkdz\\
= & -\frac{i}{2}\Big(\langle g,\tilde T_{\qT_1,\qT_2}f\rangle - \ol{\langle g,\tilde T_{\qT_1,\qT_2}f\rangle} \Big),
\end{align*}
which proves (\ref{pre_quasi_isom}).
\end{proof}

So far, we have established that $S$ is a Lipschitz map from $L^2(\R^2)$ to $L^2(R^2)$. The next step is to
show that the properties (1) and (5) in Theorem \ref{ThmScatteringTransform} also hold. In part a) of the following Corollary, we first use (\ref{pre_quasi_isom}) to obtain a generalization of the Plancherel identity (\ref{t1-l2}):

\begin{corollary}\label{cor_Plan}
a) For any $\qT_1,\qT_2\in L^2(\R^2)$ we have 
\begin{align}\label{quasi_isom}
\|\mathcal S\qT_1 - \mathcal S\qT_1\|^2_{L^2} = \|\qT_1 - \qT_2\|^2_{L^2} + \Im\langle\mathcal S\qT_1-\mathcal S\qT_2,\tilde T_{\qT_1,\qT_2}(\qT_1-\qT_2)\rangle,
\end{align}
with $\tilde T_{\qT_1,\qT_2}$ as defined in (\ref{T_tilde}.)
\goodbreak
b) The Scattering transform $\mathcal S$
satisfies the Plancherel identity $\| \mathcal S \qT\|_{L^2} =
\|\qT\|_{L^2}$ for all $\qT\in L^2(\R^2)$ , as well as the identity $\mathcal S^2 = I$.
\end{corollary}

\begin{proof}
  a) Apply (\ref{pre_quasi_isom}) with $f=\qT_1-\qT_2$ and $g = \mathcal S\qT_1-\mathcal S\qT_2$ and use (\ref{tT_Delta} and (\ref{qT_Delta}).
  \goodbreak
 b) The Plancherel Identity is a special case of (\ref{quasi_isom}) when $\qT_2=0$, as then $b(z,k)=0$ so $\tilde T_{\qT_1,0}=0$. It also follows by an extension by continuity argument from the Plancherel identity for potentials in Schwartz class in \cite{BC}, together with the (locally) uniformly Lipschitz continuity of $\mathcal S$ which is based only on part a) of Lemma \ref{lem_iso}. Likewise, the identity $\mathcal S^2=I$ was show in \cite{BC} for Schwartz potentials and now extends to all of $L^2$ in view of part a) of Lemma \ref{lem_iso}.
\end{proof}

We have shown that $\mathcal S$ is Lipschitz and also that $\mathcal S^{-1}=\mathcal S$, therefore $\mathcal S$ is a bi-Lipschitz homeomorphism of $L^2$. 

In order to complete the
proof of Theorem~\ref{ThmScatteringTransform}  it remains to show
that $\mathcal S $ is continuously differentiable and is a symplectomorphism.

\begin{lemma}\label{LemDiffeom}
a) The map $\qT\rightarrow \mathcal S(\qT)$ is a $C^1$-diffeomorphism from $L^2(\R^2)$ into $L^2(\R^2)$,
and its differential is given by
\begin{align*}
\Big(\frac{\delta\mathcal S}{\delta \qT}\Big|_{\qT}\tilde\qT\Big)(k) := T_{\qT,\qT} \tilde q
\end{align*}
with $T_{\qT,\qT}$ as defined in (\ref{T_def}).
\goodbreak
b)
\begin{align*}
\Big(\frac{\delta \mathcal S}{\delta \qT}\Big|_\qT\Big)^{-1} =  T_{\qT,\qT}^{-1} = W_{\qT,\qT},
\end{align*}
with $W_{\qT,\qT}$ as defined in (\ref{W_def}).
\goodbreak
c) For any $\qT,\qT_1,\qT_2\in L^2(\R^2)$ we have
\begin{align}\label{symplecto}
\Im\langle \frac{\delta \mathcal S}{\delta \qT}\Big|_{\qT}\qT_1,\frac{\delta \mathcal S}{\delta \qT}\Big|_{\qT}\qT_2\rangle = \Im \ol{\langle\qT_1,\qT_2\rangle}
\end{align}
\end{lemma}
\bigskip
\begin{proof}
a)
Given two potentials $\qT_1$ and $\qT_2$,  we need to estimate the difference
\[
\delta^{(2)} \tT = \tT_2 - \tT_1 - \Big(\frac{\delta\mathcal S}{\delta \qT}\Big|_{\qT_1}\delta \qT\Big)
\]
in terms of $ \delta \qT = \qT_2 - \qT_1$.
It suffices to show that
\begin{equation}\label{D2-est}
\| \delta^{(2)} \tT \|_{L^2} \lesssim
C(\|\qT_1\|_{L^2})C(\|\qT_2\|_{L^2}) \| \delta \qT\|_{L^2}^2.
\end{equation}
Using the formula \eqref{tT_Delta} we write
\begin{align}\label{var}
\delta^{(2)} \tT  = T_{\qT_1,\delta \qT} (\qT_2 - \qT_1)
\end{align}
where
\[
T_{\qT_1,\delta \qT} f
 = \frac{i}{\pi}\Big(\int
e_k(z)\ol{f}\delta a(z,k)\dnuz - \int
e_k(z){f}\delta b(z,k)\dnuz\Big)
\]
with
\begin{align*}
\delta a(z,k) &= \delta\ol{m^1_{\ol{\qT}}(z,-k)}m^1_{\qT_1}(z,k) + \ol{m^1_{\ol{\qT_1}}(z,-k)}\delta m^1_{\qT}(z,k)\\
\delta b(z,k) &= \delta\ol{m^2_{\ol{\qT}}(z,-k)}m^2_{\qT_1}(z,k) +
\ol{m^2_{\ol{\qT_1}}(z,-k)}\delta m^2_{\qT}(z,k).
\end{align*}
Now the bound \eqref{D2-est} is a consequence of
\[
\|  T_{\qT_1,\delta \qT} \|_{L^2 \to L^2} \leq
C(\|\qT_1\|_{L^2})C(\|\qT_2\|_{L^2}) \|\delta \qT\|_{L^2}.
\]
This in turn is proved in the same manner as \eqref{L2_diff}, but using the difference bounds
in part (b) of Lemma~\ref{l:Jost}.
\goodbreak
b) This follows from the arguments above, using (\ref{qT_Delta}) with $\qT_1 = \qT + \varepsilon\tilde \qT$ and $\qT_2=\qT$.
\goodbreak
c) This symplectomorphism property is another application of our identity (\ref{pre_quasi_isom}). Since the right side of (\ref{pre_quasi_isom}) is real-valued, we have
\begin{align}
\Im\langle g,T_{\tilde\qT_1,\tilde \qT_2}f\rangle=\Im \langle f,W_{\tilde\qT_1,\tilde \qT_2}g\rangle,
\end{align}
for any $f,g,\tilde\qT_1,\tilde\qT_2\in L^2(\R^2)$. We use this with $\tilde \qT_1 = \tilde \qT_2 =\qT$, $f = \qT_2$ and $g = \frac{\delta\mathcal S}{\delta \qT}\Big|_{\qT}\qT_1$. Then, in view of part b), $W_{\qT,\qT}g = \qT_1$ and (\ref{symplecto}) follows.
\end{proof}

\section{Application to Defocusing DSII}\label{Section:DSII}

In this section we use the properties of the nonlinear scattering
transform $\mathcal S$ in Theorem~\ref{ThmScatteringTransform} in
order to prove the results on the defocusing DSII problem  in
Theorem~\ref{ThmDSII} as well as Theorem~\ref{ThmScattering}. We
first review the Inverse Scattering based construction of
solutions to the DSII system \eqref{DSIIdefoc}. The steps are as
follows, see \eqref{ST}:
\begin{enumerate}[label=(\roman*)]
\item We define the initial data for the scattering transform,
\[
\tT_0 = \mathcal S \qT_0
\]

\item We compute the linear evolution on the scattering transform side
\[
\tT(t,k) = e^{2i(k^2+ \bar k^2)t} \tT_0(k)
\]

\item We return to the physical space via the inverse transform $\mathcal S^{-1} = \mathcal S$,
\[
\qT(t) = \mathcal S \tT(t).
\]
\end{enumerate}

Our starting point is the classical work of Beals and Coifman
\cite{BC85}, \cite{BC} and \cite{BC89}, who show that if $q_0 \in
\mathscr S$, then $\tT_0 \in \mathscr S$ and further that $\qT(t)\in
\mathscr S$ is the unique classical solution to \eqref{DSIIdefoc}.
Our goal, on the other hand, is to show that the above algorithm
is equally valid for all $L^2$ initial data. We begin by examining
the presumptive data-to-solution map
\begin{equation}\label{d-s-map}
\qT_0 \to \qT(t,\cdot)
\end{equation}

\begin{lemma}
The data-to-solution map \eqref{d-s-map} has the following properties:
\begin{enumerate}[label=(\roman*)]
\item  Conserved mass:
\[
\|q(t,\cdot)\|_{L^2} = \|q_0\|_{L^2}.
\]
\item Continuity in time:
\[
L^2 \ni \qT_0 \to \qT(t,\cdot) \in C(\R,L^2).
\]
\item  Lipschitz property: for two $L^2$ solutions $q_1$ and $q_2$ we have
\[
\| q_1(t,\cdot) - q_2(t,\cdot)\|_{L^2} \leq
C(\|q_{01}\|_{L^2})C(\|q_{02}\|_{L^2}) \|q_{01} - q_{02}\|_{L^2}
\] for all $t$.
\item  Pointwise bound:
\[
|\qT(t,z)| \leq C(\|\qT_0\|_{L^2})M\qT^{\text{lin}}(t,z)
\]
where
\[
 \qT^{lin}(t,\cdot) = U(t)\ol{\widehat{\mathcal S\qT_0}}.
\]
\item  $L^4$ bound:
\[
\| q\|_{L^4_{t,z} } \leq C(\|q_0\|_{L^2}).
\]
\end{enumerate}
\end{lemma}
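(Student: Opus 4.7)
The plan is to exploit the representation
\[
\qT(t,\cdot) = \mathcal{S}\big(e^{2it(k^2+\bar k^2)}\,\mathcal{S}\qT_0\big),
\]
which turns the data-to-solution map into a composition of two applications of $\mathcal{S}$ sandwiched around a multiplication by a unimodular factor on the scattering side. Properties (i), (ii), (iii) then fall out directly from Theorem~\ref{ThmScatteringTransform}: mass conservation is the Plancherel identity \eqref{t1-l2} combined with $|e^{2it(k^2+\bar k^2)}|=1$; continuity in $t$ reduces to continuity of $t\mapsto e^{2it(k^2+\bar k^2)}\tT_0$ in $L^2$ (dominated convergence) followed by the continuity of $\mathcal{S}:L^2\to L^2$; and the bi-Lipschitz bound \eqref{t1-lip} applied twice, once at $t=0$ and once at time $t$, gives (iii), since the unimodular multiplier is an $L^2$ isometry.

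For (iv), apply Corollary~\ref{CorBound} to the identity $\qT(t,\cdot)=\mathcal{S}\tT(t,\cdot)$ and use (i) to freeze the norm-dependent constant, obtaining
\[
|\qT(t,z)| \leq C(\|\qT_0\|_{L^2})\,M\widehat{\tT(t)}(z).
\]
The remaining task is to identify $|\widehat{\tT(t)}(z)|$ with $|\qT^{lin}(t,z)|$. Since $\dz e^{i(zk+\bar z\bar k)} = ik\,e^{i(zk+\bar z\bar k)}$ and $\dzb e^{i(zk+\bar z\bar k)} = i\bar k\,e^{i(zk+\bar z\bar k)}$, the operator $U(t) = e^{2it(\dz^2+\dzb^2)}$ is diagonalized under the Fourier convention \eqref{Four} by the multiplier $e^{-2it(k^2+\bar k^2)}$. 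Combining this with the sign flip produced by complex conjugation of the phase gives
\[
U(t)\overline{\widehat{\tT_0}} \;=\; \overline{\widehat{e^{2it(k^2+\bar k^2)}\tT_0}} \;=\; \overline{\widehat{\tT(t)}},
\]
so $|\qT^{lin}(t,z)|=|\widehat{\tT(t)}(z)|$ and (iv) follows.

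Finally, (v) is immediate: by (iv) and the $L^4_z$-boundedness of $M$ applied slicewise together with Fubini,
\[
\|\qT\|_{L^4_{t,z}} \;\leq\; C(\|\qT_0\|_{L^2})\,\|M\qT^{lin}\|_{L^4_{t,z}} \;\lesssim\; C(\|\qT_0\|_{L^2})\,\|\qT^{lin}\|_{L^4_{t,z}},
\]
and the standard Strichartz estimate for the linear flow $U(t)$ (the same one underlying the Ghidaglia--Saut theory recalled in Section~\ref{Section:Intro}) bounds $\|\qT^{lin}\|_{L^4_{t,z}}\lesssim \|\overline{\widehat{\tT_0}}\|_{L^2}$, which equals $\|\qT_0\|_{L^2}$ by Plancherel for \eqref{Four} together with (i).

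The only non-routine point is the algebraic identification in (iv): once the Fourier-side calculation matches the unimodular evolution on the scattering side with $U(t)$ (up to complex conjugation), the entire dispersive budget of the nonlinear flow is dominated pointwise by that of a single linear solution, and no Duhamel contraction or bootstrap is needed to obtain the $L^4_{t,z}$ bound.
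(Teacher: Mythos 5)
Your proof is correct and follows essentially the same route as the paper's: (i)--(iii) from the Plancherel and bi-Lipschitz properties of $\mathcal S$, (iv) from Corollary~\ref{CorBound} together with the identification $\ol{\widehat{\tT(t)}} = U(t)\ol{\widehat{\tT_0}}$, and (v) from the $L^4$ boundedness of $M$ combined with Strichartz for $U(t)$. The only difference is that you spell out the Fourier-multiplier diagonalization of $U(t)$ explicitly, which the paper leaves implicit.
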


\begin{proof}
(i) This is immediate from the Plancherel identity \eqref{t1-l2}.

(ii) This is a consequence of the Lipschitz bound \eqref{t1-lip}
combined with the $L^2$ time continuity of $e^{2i(k^2+ \bar k^2)t}
\tT_0$.

(iii) This is also a consequence of the Lipschitz bound \eqref{t1-lip}.

(iv) This follows from Corollary \ref{CorBound}, noting that the
(inverse) Fourier transform of (\ref{ST}) is:
\begin{align*}
\ol{\hat \tT(t,\cdot)} = U(t)\ol{\hat\tT_0} = \qT^{lin}(t,\cdot).
\end{align*}
(v) From the Strichartz estimate for the linear flow we have
\begin{align*}
\|\qT^{lin}\|_{L^4_{t,z}}\leq C\|\hat\tT_0\|_{L^2} =
C\|\tT_0\|_{L^2} = C\|\qT_0\|_{L^2},
\end{align*}
using the Plancherel identity (\ref{t1-l2}). The bound (v) now
follows from (iv) above and the $L^4$ boundedness of the Maximal
function.
\end{proof}

This lemma shows that the $L^2$ presumptive solutions can be
viewed as the unique uniform limits of Schwartz solutions.
However, it does not yet prove that these are actual solutions to
\eqref{DSIIdefoc}.  Our next step is stated separately as it no
longer relies on the scattering transform, but rather on
perturbative dispersive analysis:

 \begin{lemma} \label{l:l4-lip}
The data-to-solution map \eqref{d-s-map} satisfies the Lipschitz bound
\[
\| \qT_1 - \qT_2\|_{L^4_{t,z}} \leq C(\|q_{01}\|_{L^2})C( \|q_{02}\|_{L^2}) \|q_{01} - q_{02}\|_{L^2}.
\]
\end{lemma}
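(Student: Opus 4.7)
The plan is to prove this Lipschitz bound by a perturbative Strichartz argument on a partition of the time axis into intervals on which each individual solution has small $L^4_{t,z}$ norm. The key inputs are already in hand: the global $L^4_{t,z}$ bound for each solution from part (v) of the preceding lemma, the $L^2$ Lipschitz bound from part (iii), and the Strichartz estimates for the linear DSII flow $U(t)$ developed in \cite{GuiSau}.

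First, I would recall the relevant Strichartz bounds for the DS linear flow, namely
\[
\|U(t) f\|_{L^4_{t,z}} \lesssim \|f\|_{L^2}, \qquad \left\| \int_0^t U(t-s) F(s)\, ds\right\|_{L^4_{t,z}} \lesssim \|F\|_{L^{4/3}_{t,z}},
\]
together with the $L^p$ boundedness of the zero order operator $\partial \bar\partial^{-1} + \bar\partial \partial^{-1}$, which implies the trilinear estimate
\[
\| q \cdot (\partial \bar\partial^{-1} + \bar\partial \partial^{-1})|q|^2\|_{L^{4/3}_{t,z}(I\times\R^2)} \lesssim \|q\|_{L^4_{t,z}(I\times\R^2)}^3.
\]
Subtracting the Duhamel formulas for $q_1$ and $q_2$, $\delta q := q_1 - q_2$ satisfies, on any time interval $I = [t_0, t_1]$,
\[
\|\delta q\|_{L^4_{t,z}(I\times\R^2)} \lesssim \|\delta q(t_0)\|_{L^2} + \bigl( \|q_1\|_{L^4_{t,z}(I\times\R^2)}^2 + \|q_2\|_{L^4_{t,z}(I\times\R^2)}^2\bigr)\|\delta q\|_{L^4_{t,z}(I\times\R^2)},
\]
by standard multilinear estimates on the difference of the nonlinearities.

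Next, by the global $L^4_{t,z}$ bound from the previous lemma, we can choose a small absolute constant $\eta > 0$ and partition $\R = \bigcup_{j=1}^N I_j$ into finitely many consecutive intervals $I_j = [t_{j-1}, t_j]$ with
\[
\|q_1\|_{L^4_{t,z}(I_j \times \R^2)}^2 + \|q_2\|_{L^4_{t,z}(I_j\times\R^2)}^2 \leq \eta,
\]
where $N = N(\|q_{01}\|_{L^2},\|q_{02}\|_{L^2})$. On each such interval we can absorb the nonlinear term into the left, obtaining
\[
\|\delta q\|_{L^4_{t,z}(I_j\times\R^2)} \lesssim \|\delta q(t_{j-1})\|_{L^2}.
\]
By property (iii) of the previous lemma, $\|\delta q(t_{j-1})\|_{L^2} \leq C(\|q_{01}\|_{L^2},\|q_{02}\|_{L^2}) \|q_{01}-q_{02}\|_{L^2}$ uniformly in $j$, so summing over the finitely many $j$ yields
\[
\|\delta q\|_{L^4_{t,z}(\R\times\R^2)}^4 = \sum_{j=1}^N \|\delta q\|_{L^4_{t,z}(I_j\times\R^2)}^4 \leq C(\|q_{01}\|_{L^2},\|q_{02}\|_{L^2}) \|q_{01}-q_{02}\|_{L^2}^4,
\]
which gives the claimed bound.

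The main technical point is the existence of the finite partition with controlled $L^4_{t,z}$ size of each piece, which is precisely what the global $L^4_{t,z}$ bound from part (v) of the previous lemma supplies; once this is in place, the argument is a textbook stability argument in the Strichartz framework. The only nonstandard ingredient is dealing with the nonlocal multiplier $\partial\bar\partial^{-1} + \bar\partial\partial^{-1}$ inside the nonlinearity, but this is just a zero order Calderón–Zygmund operator on $L^p$, so the trilinear estimate above goes through unchanged.
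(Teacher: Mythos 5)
Your core Strichartz iteration is essentially the paper's argument: subtract Duhamel formulas, use the trilinear $L^{4/3}$ estimate for the nonlinearity (with the zero-order multiplier handled by $L^p$-boundedness), partition $\R$ into finitely many intervals on which both solutions have small $L^4_{t,z}$ norm using the global $L^4$ bound from part (v), absorb on each small interval, and patch using the $L^2$ Lipschitz bound from part (iii). That is all correct and matches the paper.

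However, there is a genuine logical gap at the very first step of your argument. You write ``Subtracting the Duhamel formulas for $q_1$ and $q_2$, $\delta q := q_1 - q_2$ satisfies\ldots'' --- but at this point in the paper, the Inverse-Scattering-constructed functions $q_1(t)$ and $q_2(t)$ are \emph{not yet known} to satisfy the Duhamel integral equation \eqref{Duhamel}. That fact is precisely the content of the lemma that comes \emph{after} Lemma~\ref{l:l4-lip}, and the paper uses Lemma~\ref{l:l4-lip} to prove it, so you cannot invoke it here without circularity. The IST construction only gives you a function $q(t) = \mathcal{S}\bigl(e^{2i(k^2+\bar k^2)t}\mathcal{S}q_0\bigr)$ satisfying the properties listed in the preceding lemma; it says nothing a priori about a Duhamel representation. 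The paper closes this gap by first reducing to Schwartz initial data $q_{01}, q_{02} \in \mathscr{S}$ via a density argument (invoking part (iii) to pass the $L^4$ estimate to the limit), because for Schwartz data Beals--Coifman have shown that the IST solution is a classical solution of \eqref{DSIIdefoc}, and hence certainly satisfies Duhamel. You need this reduction before you can write the Duhamel formula and run the rest of your argument. Once you insert it, the remainder of your proof goes through; your choice to propagate across intervals via part (iii) rather than via the $L^\infty_t L^2_z$ component of the Strichartz estimate (which the paper's display makes available) is a harmless and valid variant.
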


\begin{proof}
By the property (iii) in the previous lemma and a density argument for the embedding
$\mathscr S \subset L^2$, it suffices to prove this for Schwarz data $\qT_{01}$, $\qT_{02}$.
The advantage then is that we know in addition that $q_1$ and $q_2$ are classical solutions
for \eqref{DSIIdefoc}, which we rewrite as
\[
i q_t + 2(\dzb^2+\dz^2)\qT = N(q):=  \qT L|q|^2
\]
where $L$ is a zero order multiplier, which is bounded in all $L^p$ spaces for $1 < p < \infty$.
  Then we can apply Strichartz estimates on any time interval
$I = [0,T]$ for the difference of the two solutions to obtain
\[
\begin{split}
\| \qT_1 - \qT_2\|_{L^4_{t,z} \cap L^\infty_t L^2_z[I] } \lesssim
& \
 \| \qT_{01} - \qT_{02}\|_{L^2} + \| N(\qT_1) - N(\qT_2)\|_{L^\frac43[I]} \\
\lesssim & \ \| \qT_{01} - \qT_{02}\|_{L^2} + \| \qT_1 -
\qT_2\|_{L^4_{t,z}[I]} ( \| \qT_1\|_{L^4_{t,z}[I]}^2 + \|
\qT_1\|_{L^4_{t,z}[I]}^2)
\end{split}
\]
If we have the additional property
\begin{equation}\label{l4-small}
 \| \qT_1\|_{L^4_{t,z}[I]}, \| \qT_1\|_{L^4_{t,z}[I]} \ll 1
\end{equation}
then we can absorb the second term on the right into the left hand side to obtain
\[
\| \qT_1 - \qT_2\|_{L^4_{t,z} \cap L^\infty_t L^2_z[I] } \lesssim
\| \qT_{01} - \qT_{02}\|_{L^2}
\]
To use this property we take advantage of the $L^4$  bound in part
(v) of the previous lemma in order to divide the real line into
subintervals $\R = \cup _{j \in \mathcal J} I_j$ so that the
property \eqref{l4-small} holds for all intervals $I_j$. The
number of such intervals is at most
\[
|\mathcal J| \lesssim C(\|\qT_1\|_{L^4_{t,z}})C( \|\qT_2\|_{L^4_{t,z}}) \lesssim
C(\|q_{01}\|_{L^2})C(\|q_{02}\|_{L^2})
\]
Then we apply the above argument successively on all these
intervals in order to obtain the conclusion of the Lemma.
\end{proof}

The $L^4$ Lipschitz bound can now be used in order to show that
the Inverse Scattering construction yields solutions to
\eqref{DSIIdefoc}.

\begin{lemma}
For each $\qT_0 \in L^2$ the function $q(t)$ is a solution to
\eqref{DSIIdefoc} in the sense that \eqref{Duhamel} holds.
\end{lemma}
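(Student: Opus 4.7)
The plan is to prove this by a density approximation argument, using the Beals--Coifman result for Schwartz data together with the continuity properties of the data-to-solution map that we have already established.

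First I would pick a sequence of Schwartz initial data $q_0^{(n)} \in \mathscr{S}$ with $q_0^{(n)} \to q_0$ in $L^2$. By the classical Beals--Coifman theory cited above, the Inverse-Scattering construction yields the unique classical Schwartz solution $q^{(n)}(t,\cdot) \in \mathscr{S}$ of \eqref{DSIIdefoc}, and in particular the Duhamel identity
\begin{equation*}
q^{(n)}(t) = U(t)q_0^{(n)} + \Lambda(q^{(n)})(t)
\end{equation*}
holds for every $t$. The strategy is now to pass to the limit $n \to \infty$ in each of the three terms, showing convergence in $L^\infty_t L^2_z([-T,T] \times \C)$ for arbitrary $T > 0$.

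The linear term $U(t) q_0^{(n)} \to U(t)q_0$ in $C([-T,T], L^2)$ is immediate from the unitarity of $U(t)$ on $L^2$. The left-hand side converges, by property (iii) of the previous lemma, to $q(t)$ in $C([-T,T], L^2)$. The main point is the convergence of the Duhamel term $\Lambda(q^{(n)})(t) \to \Lambda(q)(t)$. Writing the nonlinearity as $N(q) = q\, L(|q|^2)$ with $L = (\dbar \partial^{-1} + \partial \dbar^{-1})$ a zero-order Calder\'on--Zygmund multiplier bounded on $L^p$ for $1 < p < \infty$, the standard dual Strichartz estimate for the linear flow $U(t)$ gives
\begin{equation*}
\|\Lambda(q^{(n)}) - \Lambda(q)\|_{L^\infty_t L^2_z([-T,T])} \lesssim \| N(q^{(n)}) - N(q)\|_{L^{4/3}_{t,z}([-T,T] \times \C)}.
\end{equation*}
Expanding the cubic difference, H\"older's inequality yields
\begin{equation*}
\| N(q^{(n)}) - N(q)\|_{L^{4/3}_{t,z}} \lesssim \|q^{(n)} - q\|_{L^4_{t,z}([-T,T])} \bigl( \|q^{(n)}\|^2_{L^4_{t,z}} + \|q\|^2_{L^4_{t,z}}\bigr).
\end{equation*}

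The main obstacle is therefore controlling the $L^4_{t,z}$ difference $\|q^{(n)} - q\|_{L^4_{t,z}([-T,T])}$, but this is supplied by Lemma~\ref{l:l4-lip}, which gives a locally uniform Lipschitz bound of this quantity in terms of $\|q_0^{(n)} - q_0\|_{L^2}$; and the two $L^4_{t,z}$ factors on the right are uniformly bounded in $n$ by property (v) of the previous lemma (depending only on $\sup_n \|q_0^{(n)}\|_{L^2}$). Hence the Duhamel nonlinear term converges in $L^\infty_t L^2_z$ to $\Lambda(q)(t)$. Passing to the limit in the identity for $q^{(n)}$ then yields \eqref{Duhamel} for $q$, on every interval $[-T,T]$ and hence for all $t \in \R$. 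This completes the proof.
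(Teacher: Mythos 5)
Your proof is correct and fills in precisely the ``straightforward'' argument the paper alludes to: approximate by Schwartz data, use the Beals--Coifman classical theory to get the Duhamel identity for the approximants, and pass to the limit in all three terms using the Lipschitz bounds of the preceding lemmas together with the inhomogeneous Strichartz estimate. The only point worth being pedantic about is that the estimate you invoke is the full inhomogeneous (retarded) Strichartz estimate $\bigl\|\int_0^t U(t-s)F(s)\,ds\bigr\|_{L^\infty_t L^2_z} \lesssim \|F\|_{L^{4/3}_{t,z}}$ rather than merely the dual homogeneous one, but that is a naming issue, not a gap.
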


The proof is straightforward, based on the Strichartz estimates for the linear flow.

This concludes the proof of Theorem~\ref{ThmDSII}. We now
turn our attention to Theorem~\ref{ThmScattering}. We begin with a slight
improvement of Lemma~\ref{l:l4-lip}:

\begin{lemma}
The map
\[
L^2 \ni \qT_0 \to q \in L^4_{t,z}
\]
is smooth.
\end{lemma}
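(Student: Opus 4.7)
The plan is to apply the implicit function theorem to the Duhamel formulation \eqref{Duhamel} on short time intervals, then patch the pieces together using the global $L^4$ bound already established in Theorem~\ref{ThmDSII}. The key observation is that the nonlinearity $\Lambda(q)(t) = i\int_0^t U(t-s)(q L|q|^2)(s)\, ds$, with $L(D) = (D_1^2-D_2^2)/(D_1^2+D_2^2)$, is a polynomial (in fact cubic) expression in $q$ and $\bar q$; combined with the Strichartz estimates for $U(t)$ and the boundedness of $L$ on $L^p$ for $1<p<\infty$, it defines a $C^\infty$ map
\[
\Lambda : L^4_{t,z}(I) \to L^4_{t,z}(I)\cap C(I,L^2)
\]
on any time interval $I$. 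Consequently the Duhamel map
\[
F(q_0, q) := q - U(t) q_0 - \Lambda(q)
\]
is $C^\infty$ as a map from $L^2 \times L^4_{t,z}(I)$ to $L^4_{t,z}(I)$.

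Next I would work on a short interval $I$ on which the reference solution satisfies $\|q\|_{L^4_{t,z}(I)} \ll 1$. The Fréchet derivative $D_q F = I - D\Lambda(q)$ is then invertible on $L^4_{t,z}(I)$ by a Neumann series, since $D\Lambda(q)\tilde q$ is a Duhamel integral of a trilinear expression whose operator norm is bounded by a multiple of $\|q\|_{L^4_{t,z}(I)}^2$. The implicit function theorem then yields that $q_0 \mapsto q|_I$ is a smooth map from a small $L^2$-neighborhood of the reference initial data into $L^4_{t,z}(I)\cap C(I,L^2)$.

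To globalize, I would invoke the bound $\|q\|_{L^4_{t,z}(\R)} \leq C(\|q_0\|_{L^2})$ of Theorem~\ref{ThmDSII}(ii) to partition $\R = \bigcup_{j=1}^N I_j$, with endpoints $t_j$, so that $\|q\|_{L^4_{t,z}(I_j)} \ll 1$ on each piece for the reference solution. The $L^4_{t,z}$ Lipschitz bound of Lemma~\ref{l:l4-lip} ensures that the same partition continues to satisfy the smallness condition (with a slightly relaxed threshold) for every $q_0$ in a small $L^2$-neighborhood of the reference, with $N$ depending only on $\|q_0^{\ast}\|_{L^2}$. Applying the previous local step on each $I_j$, and composing with the smooth time-$t_j$ evaluation $C(I_j,L^2)\to L^2$ that provides the initial data for the next interval, one obtains smoothness of $q_0 \mapsto q|_{I_j}$ for every $j$. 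Concatenating these finitely many $C^\infty$ pieces gives the desired smoothness of the full map into $L^4_{t,z}(\R)$.

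The main obstacle is ensuring that the partition into short-$L^4$ intervals is stable under perturbation of $q_0$, so the implicit function theorem can be applied with uniform parameters across the whole neighborhood and the finitely many local statements can actually be composed. This stability is precisely what Lemma~\ref{l:l4-lip} delivers, which is why the previous lemma is a prerequisite rather than a parallel statement.
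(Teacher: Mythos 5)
Your argument matches the paper's own approach: partition $\R$ into finitely many intervals on which $\|q\|_{L^4_{t,z}}$ is small using the global $L^4$ bound, run a local perturbative argument (implicit function theorem / convergent Duhamel iteration via Strichartz) on each piece, and concatenate. The paper leaves the local step as "reiterating the Duhamel formula," which is precisely the Neumann-series inversion of $D_qF$ you spell out; your additional remark on the stability of the partition under $L^2$-small perturbations of $\qT_0$, via Lemma~\ref{l:l4-lip}, is a detail the paper elides but correctly identifies what makes the finite concatenation legitimate.
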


\begin{proof}
This is a standard perturbative argument which we only outline.
Given a solution $\qT_1$ to DSII with initial data $\qT_{01} \in
L^2$, we seek to solve the DSII with initial data $\qT_{02}$
sufficiently close to $\qT_{01}$. Since $\qT_1 \in L^4_{t,z}$, we
can divide the real line as in the proof of Lemma~\ref{l:l4-lip}
into finitely many subintervals $I_j$ so that  $\|
\qT_1\|_{L^4_{t,z}[I_j]}$ is small.

Then  we construct the solution $\qT_2$ successively in each
subinterval by reiterating the Duhamel formula \eqref{DuhamelInt}.
This converges due to the Strichartz estimates.
\end{proof}

The next lemma establishes the existence and regularity of the wave operators $W_{\pm}$:

\begin{lemma}
The wave operators $W_{\pm}$ are well defined and locally Lipschitz in $L^2$.
\end{lemma}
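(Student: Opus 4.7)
The plan is to define $W_+\qT_0 := \ol{\widehat{\mathcal S\qT_0}}$, verify that this function realizes the wave-operator defining limit, and deduce local Lipschitz continuity; the statement for $W_-$ is identical up to a time reversal. The Lipschitz property comes for free from what we already have: Theorem~\ref{ThmScatteringTransform}\,(3) gives that $\qT_0 \mapsto \mathcal S\qT_0$ is locally Lipschitz on $L^2$, and $g \mapsto \ol{\hat g}$ is an $L^2$ isometry, so their composition is locally Lipschitz, with constant depending only on $\|\qT_0\|_{L^2}$.

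To verify that this candidate $\qT_+$ is actually the wave operator, first rewrite things on the Fourier side. Setting $\tT(t,k) := e^{2it(k^2+\bar k^2)}\tT_0(k)$ with $\tT_0 = \mathcal S\qT_0$, the Inverse-Scattering construction \eqref{ST} combined with $\mathcal S^2 = I$ gives $\qT(t,\cdot) = \mathcal S\tT(t,\cdot)$. On the other hand $U(t)$ acts on the Fourier side as multiplication by $e^{-2it(k^2+\bar k^2)}$, and a direct computation using $\widehat{\ol{\widehat{\tT_0}}}(k) = \ol{\tT_0(k)}$ identifies
\[
U(t)\qT_+ \;=\; \ol{\widehat{\tT(t,\cdot)}}.
\]
Hence the wave-operator claim reduces to
\[
\bigl\|\mathcal S\tT(t,\cdot) - \ol{\widehat{\tT(t,\cdot)}}\bigr\|_{L^2} \to 0 \qquad \text{as } t \to \infty,
\]
i.e.\ the nonlinear correction to $\mathcal S$ (the $O(q^2)$ term in \eqref{S_Four}), evaluated at the rapidly oscillating function $\tT(t,\cdot)$, should vanish in $L^2$.

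For Schwartz $\qT_0$ the displayed limit is the classical Beals-Coifman asymptotic \cite{BC}, and the extension to arbitrary $L^2$ data is by density: picking Schwartz approximants $\qT_{0,n} \to \qT_0$ in $L^2$, the uniform-in-$t$ $L^2$ Lipschitz property of the data-to-solution map established earlier in this section gives $\sup_t \|\qT_n(t,\cdot) - \qT(t,\cdot)\|_{L^2} \to 0$, while the $L^2$-isometry of $U(t)$ combined with the Lipschitz property of $W_+$ (step one) gives $\sup_t \|U(t)W_+\qT_{0,n} - U(t)W_+\qT_0\|_{L^2} \to 0$; plugging both into the Schwartz limit yields the claim for $\qT_0 \in L^2$. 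A self-contained alternative, foreshadowed by the remark \eqref{tT-point++} after Lemma~\ref{l:ST}, applies that quadratic bound with the roles of the spatial and spectral variables exchanged (legitimate by $\mathcal S^2 = I$) and reduces the task to showing that $\|\Icb(e_z\ol{\tT(t,\cdot)})\|_{L^4}$ decays as $t \to \infty$, which in turn follows by first reducing to Schwartz $\tT_0$ and invoking the pointwise dispersive estimate $\|U(t)\phi\|_{L^\infty} \lesssim t^{-1}\|\phi\|_{L^1}$ for the linear DSII flow. I expect this last oscillatory-integral decay to be the main technical obstacle on the self-contained route; the density route is shorter but relies on \cite{BC} as a black box.
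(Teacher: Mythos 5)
Your proof is correct, but it takes a genuinely different route from the paper's. The paper proves this lemma \emph{without} the explicit formula $W_\pm \qT_0 = \ol{\widehat{\mathcal S\qT_0}}$: it simply writes the Duhamel formula $U(-t)\qT(t) = \qT_0 + \int_0^t U(-s)N(\qT(s))\,ds$, observes that $N(\qT) \in L^{4/3}_{t,z}(\R\times\R^2)$ globally (since $\qT \in L^4_{t,z}$ from the earlier lemma), and invokes the dual Strichartz estimate to conclude the integral converges as $t\to\pm\infty$. Well-definedness and smoothness (hence local Lipschitz) of $\qT_0 \mapsto \qT_\pm$ then follow from the smooth dependence $\qT_0 \mapsto \qT \in L^4_{t,z}$. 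The identification $W_\pm\qT_0 = \ol{\widehat{\mathcal S\qT_0}}$ is handled \emph{afterwards}, in a separate paragraph, by exactly the density argument you sketch. You instead declare the formula up front, make the identification part of the well-definedness proof, and obtain Lipschitz for free from Theorem~\ref{ThmScatteringTransform}\,(3). Both decompositions are valid; the paper's has the mild advantage that the lemma itself is then purely dispersive and makes no appeal to \cite{BC} or to the scattering-transform formula, deferring those to the subsequent step. Your density step is sound (the uniform-in-$t$ $L^2$ Lipschitz bound from the earlier lemma and the unitarity of $U(t)$ close the $3\epsilon$ argument correctly), and it matches the paper's subsequent treatment. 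One small inaccuracy in your sketch of the self-contained route: the estimate $\|U(t)\phi\|_{L^\infty}\lesssim t^{-1}\|\phi\|_{L^1}$ is not directly the tool that handles $\|\Icbk(e_z\,\ol{\tT(t,\cdot)})\|_{L^4_k}$; the integrand has the $\Icbk$ convolution built in, and the paper instead does a hands-on stationary phase bound $|\Icbk(e_z e^{it(k^2+\bar k^2)}\tT_0)| \lesssim t^{-1/2}(1+|k|)^{-N}(1+t^{-1/2}|z-2tk|)^{-1}$, yielding $L^4_k$ decay of order $t^{-1/4}$. You correctly flag this as the technical obstacle, so this is a pointer worth tightening rather than a gap.
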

\begin{proof}
We begin by using the Duhamel formula to compute
\[
U(-t) \qT(t) = \qT(0) + \int_0^t U(-s) N(\qT(s)) ds
\]
Since $q \in L^4_{t,z}$, it follows that $N(q) \in L^\frac43$.
Then by Strichartz estimates the above expression converges in
$L^2$ as $t \to \pm \infty$, and we have
\[
\qT_\pm = \lim_{t \to \pm \infty} U(-t) \qT(t) =  \qT(0) +
\int_0^{\pm \infty} U(-s) N(\qT(s)) ds.
\]
The map $\qT_0 \to \qT_\pm $ is smooth in view of the previous Lemma and Strichartz estimates.
\end{proof}

To  see that $W_{\pm} \qT_0 = \ol{\widehat{ \mathcal S q_0}}$, and
thus complete
 the proof of   Theorem~\ref{ThmScattering}, we can now argue by density.
It suffices to know that this  is true for $\qT_0 \in \mathscr S$.
This was already proved in \cite{BC}, but for the sake of
completeness we provide a self-contained argument below.


If $\qT_0$ is Schwartz then $\tT_0$ is also Schwartz
(see\cite{BC}), and $\tT(t) = e^{2it (k^2+ \bar k^2)} \tT_0$.
 In view of the bound \eqref{tT-point++}, applied with the roles of $\qT$ and $\tT$ reversed, it
suffices to show that
\begin{equation}\label{last}
\lim_{t \to \infty} \| \Icbk  ( e_{z} e^{2it(k^2+\bar k^2)} \tT_0)
\|_{L^4} \to 0.
\end{equation}
Indeed, a direct computation shows that
\[
|  \Icbk  ( e_{z} e^{2it(k^2+\bar k^2)} \tT_0)| \lesssim
t^{-\frac12} (1+ |k|)^{-N}  (1+ t^{-\frac12} |z - 2tk|)^{-1}
\]
which has an $L^4$ norm of size $t^{-\frac14}$. This completes the
proof of \eqref{last}.

\section{Application to Two Inverse Boundary Value Problems}\label{Section:Calderon}

In this section we prove Theorems~\ref{t:bvp-solve},
\ref{reconst}, \ref{dbar-solve}, \ref{dbar-inverse}.  We begin
with the results for the boundary value problem
\eqref{dbar-model}, which we recall here:
\begin{equation*}
\left\{ \begin{array}{cc}
\dbar v -  q \bar v = 0  & \text{in } \Omega
\cr
 \Im (\nu v) =  g_0  & \text{in } \partial \Omega
\end{array}
\right.
\end{equation*}
The motivation for the study of this problem
was given in the Introduction. We start with the solvability result for this problem.

\begin{proof}[Proof of Theorem~\ref{dbar-solve}]
We consider several increasingly difficult cases:
\medskip

{\bf Case 1:  $q = 0$, $\int g_0 = 0$}. Then $v$ is holomorphic
in $\Omega$  so we can express it in the form
\[
v = \partial u
\]
with $u$ real valued in $\Omega$. Then $u$ must solve the Laplace equation
\begin{equation}
\left\{ \begin{array}{cc} \triangle u = 0   & \text{in } \Omega
\cr
 \frac{\partial u}{\partial\tau} =  -2g_0  & \text{on } \partial
 \Omega.
\end{array}
\right.
\end{equation}
Here $\tau = (-\nu_2, \nu_1)$ denotes the unit tangent vector to
$\partial\Omega$ in the counterclockwise direction. This Dirichlet
problem is uniquely solvable modulo constants if and only if $\int
g_0 = 0$, and yields a solution $u \in H^\frac32$ with
$\frac{\partial u}{\partial\nu} \in L^2$.

\bigskip

{\bf Case 2: The inhomogeneous problem.} Here we consider
the inhomogeneous problem
\begin{equation}\label{inhom}
\left\{ \begin{array}{cc} \dbar v = f_0  & \text{in } \Omega \cr
 \Im (\nu v) =  g_0  & \text{on } \partial \Omega
\end{array}
\right.
\end{equation}
with $f_0 \in L^\frac43(\Omega)$ and claim that we can solve it if
and only if
\begin{equation}\label{lin-constraint}
\int_{\Omega} \Im f_0 dz = \frac12\int_{\partial \Omega}  g_0 ds.
\end{equation}
Indeed, extending $f_0$ by zero outside $\Omega$ we can solve
$\triangle u=4f_0$ in all of $\R^2$, obtaining a solution $v_0 = \partial u  \in
W^{1,\frac43}_{loc}$, which is easily seen to have an $L^2$ trace
on the boundary. Now we are left with the homogeneous problem,
which is solvable provided that $g_0$ is in a codimension one
affine subspace. This constraint is easily seen to be
\eqref{lin-constraint} by integrating the equation \eqref{inhom}
over $\Omega$ and using the divergence theorem. We can restate the
result as follows:

\begin{lemma}
  For each $f_0 \in L^\frac43(\Omega)$ and real-valued $g_0 \in L^2(\partial
  \Omega)$ the problem
\begin{equation}\label{dbar-c}
\left\{ \begin{array}{cc}
\dbar v = f_0  & \text{in } \Omega
\cr
 \Im (\nu v) =  g_0 +c  & \text{on } \partial \Omega
\end{array}
\right.
\end{equation}
admits a unique solution $(v,c) \in H^\frac12 \times \R$.
Moreover, we have:
\begin{equation}
\|v\|_{H^{\frac12}(\Omega)} + \|v\|_{L^2(\partial\Omega)} \leq C
(\|f_0\|_{L^{\frac43}(\Omega)}+ \|g_0\|_{L^2(\partial\Omega)}).
\end{equation}
\end{lemma}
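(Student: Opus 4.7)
The plan is to reduce the lemma to the two cases already treated in the proof of Theorem~\ref{dbar-solve}. The constant $c$ is not a free parameter but is uniquely forced upon us: integrating $\dbar v = f_0$ over $\Omega$ via the divergence theorem and invoking the boundary constraint reproduces \eqref{lin-constraint} with $g_0 + c$ in place of $g_0$. This yields
\[
c = \frac{1}{|\partial \Omega|}\Big(2\int_\Omega \Im f_0\, dA - \int_{\partial \Omega} g_0\, ds\Big),
\]
with the immediate H\"older bound $|c| \lesssim \|f_0\|_{L^{4/3}(\Omega)} + \|g_0\|_{L^2(\partial\Omega)}$. With this choice of $c$, the pair $(f_0, g_0+c)$ satisfies the compatibility condition \eqref{lin-constraint}, so Case~2 applies directly and supplies a solution $v$.

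For the quantitative estimate I would unpack the Case~2 construction by writing $v = v_0 + v_1$, where $v_0 = \Icb(\chi_\Omega f_0)$ is the solid Cauchy transform of the zero extension of $f_0$ to all of $\R^2$, and $v_1$ is the purely holomorphic correction from Case~1 adjusting the remaining boundary data. The Hardy--Littlewood--Sobolev bound of Lemma~\ref{Nac:Lemma1.4}(a) with $p = 4/3$, together with the $L^{4/3}$-boundedness of the Beurling transform $\partial \Icb$, places $v_0$ in $W^{1,4/3}(\R^2) \hookrightarrow H^{1/2}(\Omega)$ with an $L^2(\partial\Omega)$ trace, each bounded by $\|f_0\|_{L^{4/3}(\Omega)}$. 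The Case~1 bound applied to $v_1$ with boundary datum $g_0 + c - \Im(\nu v_0)|_{\partial\Omega}$ then controls $\|v_1\|_{H^{1/2}(\Omega)} + \|v_1\|_{L^2(\partial\Omega)}$ by $\|g_0\|_{L^2(\partial\Omega)} + \|f_0\|_{L^{4/3}(\Omega)}$ once the bound on $|c|$ is inserted; summing yields the stated estimate.

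For uniqueness I would take the difference of two presumed solutions: if $(v_1, c_1)$ and $(v_2, c_2)$ both solve \eqref{dbar-c}, then $w = v_1 - v_2$ is holomorphic in $\Omega$ and $\Im(\nu w) = c_1 - c_2$ is constant on $\partial\Omega$. The same divergence theorem forces $(c_1 - c_2)|\partial \Omega| = 0$, so $c_1 = c_2$, and then Case~1 with zero boundary datum gives $w = 0$.

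The main obstacle is the $L^2(\partial \Omega)$ trace bound for $v_0$, the ``easily seen'' point from the Case~2 discussion. I would justify it via a Green's identity argument exploiting $\dbar v_0 = f_0 \in L^{4/3}(\Omega)$ with $f_0 \equiv 0$ in the exterior, together with the $C^{1,1}$ regularity of $\partial\Omega$; alternatively, one may invoke the jump relations for the Cauchy integral, which give interior and exterior boundary traces with the needed control. Once that trace bound is in hand, the remainder of the proof reduces to bookkeeping with the already--established Cases~1 and~2.
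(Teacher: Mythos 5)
Your proposal follows the same route as the paper's Case~2 discussion: determine $c$ from the divergence-theorem compatibility condition, peel off a particular solution $v_0$ of $\dbar v_0 = f_0$ (the paper writes it as $\partial u$ with $\triangle u = 4f_0$, you write it directly as $\Icb(\chi_\Omega f_0)$ — the same object), and correct by the holomorphic Case~1 solution. The argument, including the $W^{1,4/3}\hookrightarrow H^{1/2}(\Omega)$ embedding and the $L^2(\partial\Omega)$ trace justified via jump relations, is correct and matches the paper's (rather terse) treatment.
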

We will write
\[
v = T f_0 + B g_0
\]
where
\[
T : L^{\frac43}(\Omega) \to H^\frac12(\Omega), \qquad B:
L^2(\partial\Omega) \to H^\frac12(\Omega).
\]
Note that $c$ can be explicitly determined from $f_0$ and $g_0$.
\bigskip

{\bf  Case 3:  $q$ small.}
We first solve the counterpart of \eqref{dbar-c},
namely
\begin{equation}\label{dbar-cq}
\left\{ \begin{array}{cc}
\dbar v -  q \bar v = f_0  & \text{in } \Omega
\cr
 \Im (\nu v) =  g_0 +c  & \text{on } \partial \Omega
\end{array}
\right.
\end{equation}
This we can rewrite as
\[
v = T ( q \bar v) + Tf_0 + Bg_0
\]
which is solved by a Neumann series in $H^\frac12$.

Now we set $f_0 = 0$ and assume that $\int g_0 = 0$. The solution
we obtain above does not \emph{a priori} have $c = 0$, which is why we
need to prove that \emph{a posteriori}. Precisely, integrating by parts
against $\sigma^{-\frac12}$ and using $q = -\frac12 \partial \log \sigma$ we obtain 
\[
0 =  \Im \int_\Omega (\dbar v -  q \bar v) \sigma^{-\frac12}   dz = - \Im \int_\Omega
  v \dbar \sigma^{-\frac12} +\sigma^{-\frac12} q \bar v dz  + \half
\int_{\partial \Omega}  \Im (\nu v) ds = \half cL
\]
where $L$ is the length of $\partial \Omega$. Therefore we
conclude that $c = 0$.

\bigskip

{\bf  Case 4:  $q$ large.}
We solve again \eqref{dbar-cq}. The problem is written as
\[
v = T ( q \bar v) + Tf_0 + Bg_0
\]
The operator $v \to  T ( q \bar v) $ is compact in $H^\frac12$, as it is bounded, linear in $q$ 
and compact for smooth $q$,
so by the Fredholm alternative it remains to show that the homogeneous
problem
\[
v = T ( q \bar v)
\]
admits no nontrivial solution.

Such a solution would solve
\begin{equation}
\left\{ \begin{array}{cc}
\dbar v -  q \bar v = 0  & \text{in } \Omega
\cr
 \Im (\nu v) =  c & \text{on } \partial \Omega
\end{array}
\right.
\end{equation}
The constant $c$ must be equal to zero, as in the previous case.

From here we proceed as in the global $\dbar$ problem.
We split $q$ into $q = q_{smooth} + q_{small}$.
We seek to eliminate $q_{smooth}$ by a gauge transformation
$\phi$ which solves
\[
\dbar \phi = r:=  -\frac{\bar v}{v} \ q_{smooth}
\]
Here we need to insure that $\phi$ is real on the boundary.
So we need to solve
\begin{equation}
\left\{ \begin{array}{cc}
\dbar \phi = r   & \text{in } \Omega
\cr
 \Im \phi =  0 & \text{on } \partial \Omega
\end{array}
\right.
\end{equation}
Solving the inhomogeneous problem we are left with
\begin{equation}
\left\{ \begin{array}{cc}
\dbar \phi = 0   & \text{in } \Omega
\cr
 \Im \phi =  f & \text{on } \partial \Omega
\end{array}
\right.
\end{equation}
where we solve first for $\Im \phi$ and then $\Re \phi$ is uniquely determined modulo constants.

Now we set
\[
u = e^\phi v
\]
which solves
\begin{equation}
\left\{ \begin{array}{cc} \dbar u = q_{small} \ol u   & \text{in }
\Omega \cr
 \Im ( \nu u ) =  0 & \text{on } \partial \Omega
\end{array}
\right.
\end{equation}
We are now in the small $q$ case so $u = 0$ follows. The proof of Theorem~\ref{dbar-solve}
is concluded.
\end{proof}

As discussed in the introduction, the above proof allows us to
define a Hilbert transform operator associated to the $\dbar$
problem in $\Omega$ as
\[
L^2 \ni \Im (\nu v) \to \Hq v := \Re(\nu v) \in L^2.
\]
Next we show that the boundary data $\Hq$ uniquely determines $q$.

\begin{proof}[Proof of Theorem~\ref{dbar-inverse}]

The proof below is in the spirit of \cite{Nac88} and \cite{Nac},
also inspired by some arguments in \cite{KT} and \cite{Tam}. Let
$\qT \in L^2 ( \mathbb R^2 )$ be defined by zero extension outside
$\Omega$,
\[
\qT=\left\{ \begin{array}{cc} -\frac{1}{2}\partial\log\sigma & \text{in } \Omega \cr
0 & \text{in } \mathbb R^2 \setminus  \Omega \end{array}\right.
\]
We will  show that $\tT=\mathcal S\qT$ can be constructively determined from
knowledge of $\Hq$. The potential $\qT$ can then be recovered from
$\tT$ using the inversion Theorem \ref{ThmScatteringTransform}
(5).

For $k$ such that $M\hat{\qT}(k)<\infty$, let $m_\pm(\cdot,k)$ be
the Jost solutions of (\ref{eq_n}) constructed in Lemma
\ref{l:Jost}. We have
\begin{align*}
\tT(k) &= \frac{1}{2\pi i}\int_{\R^2} e_k(z) \ol{\qT(z)}
\Big(m_+(\cdot,k) + m_-(\cdot,k)\Big) \\
&= \frac{1}{2\pi i}\int_{\Omega} \partial
\Big(\ol{m_+}(\cdot,k) - \ol{m_-}(\cdot,k)\Big) \\
&= \frac{1}{4\pi i}\int_{\partial\Omega} \ol\nu
\Big(\ol{m_+}(\cdot,k) - \ol{m_-}(\cdot,k)\Big)
\end{align*}
Thus, it will suffice to show that one can compute the traces of
$m_\pm(\cdot,k)$ from knowledge of $\mathcal H_\qT$ on
$\partial\Omega$. Let
\begin{align}
\psi_\pm(z,k) = e^{izk}m_\pm(z,k).
\end{align}
The following lemma shows that we can obtain the trace
$\psi_+(\cdot,k)\Big|_{\partial\Omega}$ from $\Hq$.
\begin{lemma}
Let $\Omega$, $\sigma$, $\qT$ be as in Theorem \ref{reconst} and
$k$ as above. Then the function $\psi_+(z,k)$ restricted to
$z\in\C\backslash\ol\Omega$ is the unique solution of the exterior
problem
\begin{align}\label{set}
\begin{cases}
(i)&\enspace\dzb\psi_+=0\text{ in }\C\backslash\ol\Omega\\
(ii)&\enspace\psi_+(z,k)e^{-izk}-1\in L^4(\C\backslash\ol\Omega)\cap W^{1,\frac43}_{loc}\\
(iii)&\enspace\Re(\nu\psi_+\Big|_{\partial\Omega}) = \Hq
(\Im(\nu\psi_+\Big|_{\partial\Omega})).
\end{cases}
\end{align}
\end{lemma}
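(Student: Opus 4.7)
The plan is to verify the three conditions directly for $\psi_+$, and then prove uniqueness by extending any candidate solution into $\Omega$ and invoking the uniqueness part of Lemma~\ref{l:Jost}.

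For existence, property (i) follows because $q \equiv 0$ outside $\Omega$, so the Jost equation \eqref{eq_n} gives $\dbar m_+ = 0$ there, and since $e^{izk}$ is holomorphic in $z$ we obtain $\dbar\psi_+ = 0$ on $\C\setminus\overline\Omega$. Property (ii) is immediate from Lemma~\ref{l:Jost}: $\psi_+(z,k)e^{-izk} - 1 = m_+(z,k)-1$ belongs to $L^4$ and, being holomorphic on $\C\setminus\overline\Omega$, is trivially in $W^{1,4/3}_{loc}$ there. For (iii), the key observation is the identity $e^{izk}e_{-k}(z) = e^{-i\overline{zk}} = \overline{e^{izk}}$, which allows us to compute, in $\Omega$,
\[
\dbar\psi_+ \;=\; e^{izk}\dbar m_+ \;=\; e^{izk}e_{-k}\,q\,\overline{m_+} \;=\; q\,\overline{e^{izk}m_+} \;=\; q\,\overline{\psi_+}.
\]
Thus $\psi_+|_\Omega$ is the interior solution supplied by Theorem~\ref{dbar-solve} corresponding to the boundary data $g_0 = \Im(\nu\psi_+|_{\partial\Omega})$, and the definition of $\Hq$ yields (iii).

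For uniqueness, suppose $\widetilde\psi$ satisfies (i)--(iii), and set $g_0 = \Im(\nu\widetilde\psi|_{\partial\Omega})\in L^2(\partial\Omega)$. Theorem~\ref{dbar-solve} then produces $v \in H^{1/2}(\Omega)$ with $\dbar v - q\overline{v} = 0$ in $\Omega$ and $\Im(\nu v) = g_0$. Combining the definition of $\Hq$ with (iii) gives $\Re(\nu v) = \Hq(g_0) = \Re(\nu\widetilde\psi)$ on $\partial\Omega$, so the full complex traces agree. Gluing, the function $w := v\cdot\mathbf{1}_\Omega + \widetilde\psi\cdot\mathbf{1}_{\C\setminus\overline\Omega}$ lies in $H^{1/2}_{loc}(\C)$ and satisfies $\dbar w = q\overline{w}$ globally (using $q = 0$ outside $\Omega$). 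Setting $m := e^{-izk}w$ and reversing the computation above shows that $m$ solves the Jost equation $\dbar m = e_{-k}q\overline{m}$, and $m - 1 \in L^4(\C)$ follows from (ii) together with the Sobolev embedding $H^{1/2}(\Omega)\hookrightarrow L^4(\Omega)$. The uniqueness statement in Lemma~\ref{l:Jost} then forces $m = m_+$, hence $\widetilde\psi = \psi_+$ on $\C\setminus\overline\Omega$.

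The chief technicality to pin down is the zero-mean hypothesis on $g_0$ required in order to apply Theorem~\ref{dbar-solve}. This will be extracted from the decay of $h := e^{-izk}\widetilde\psi - 1$, which by (i) and (ii) is holomorphic in $\C\setminus\overline\Omega$ and in $L^4$, hence vanishes at infinity by the mean value property. A Stokes-type computation on a large annulus between $\partial\Omega$ and a circle of radius $R\to\infty$, combined with $\int_{\partial\Omega}\nu\,ds = 0$, will then deliver the required integral identity on $\partial\Omega$. Apart from this bookkeeping, the argument is a clean interior/exterior matching followed by an appeal to Jost uniqueness.
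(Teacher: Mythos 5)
Your proof is correct in substance and follows essentially the same route as the paper: for uniqueness you glue an exterior candidate to the interior solution supplied by Theorem~\ref{dbar-solve}, obtain a global $L^4$-type solution of the Jost equation, and then invoke the uniqueness of Jost solutions. The only organizational difference is that the paper works with the difference $h$ of two candidate solutions, which satisfies $e^{-izk}h\in L^4$ (the constants cancel), and concludes $h=0$ from the $L^4$ uniqueness of Lemma~\ref{perturb}; you instead work directly with a single candidate and match it to $m_+$ via Lemma~\ref{l:Jost}. These two framings are equivalent, since Lemma~\ref{l:Jost}'s uniqueness rests on Lemma~\ref{perturb}.

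The one place you go astray is your ``chief technicality'' paragraph. The Stokes-at-infinity argument you sketch does not in fact produce the mean-zero condition on $g_0=\Im(\nu\widetilde\psi|_{\partial\Omega})$. The decay you get from the mean value property and $h\in L^4(\C\setminus\overline\Omega)$ is only $|h(z)|=O(|z|^{-1/2})$, which does not make $\oint_{|z|=R}h\,dz$ vanish as $R\to\infty$; indeed $h$ may have a nonzero residue ($h\sim a_1/z$ with $a_1\ne 0$ is perfectly consistent with $h\in L^4$ and holomorphic outside $\Omega$), so the boundary integral picks up $2\pi i a_1$ rather than zero. And even if this circle integral vanished, it would control $\oint_{\partial\Omega}(e^{-izk}\widetilde\psi-1)\,dz$, not $\int_{\partial\Omega}\Im(\nu\widetilde\psi)\,ds$, because of the oscillatory and growing factor $e^{izk}$. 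The correct observation is simpler: $\mathcal H_q$ is defined only on the mean-zero subspace of $L^2(\partial\Omega)$, so for condition (iii) to be a meaningful hypothesis on $\widetilde\psi$ one must already have $\int_{\partial\Omega}\Im(\nu\widetilde\psi)\,ds=0$; no separate computation is needed (the paper also leaves this point implicit). For the existence direction, the fact that $\psi_+|_\Omega$ is a genuine interior solution of $\dbar v=q\bar v$ forces the mean-zero condition on its trace via the integration-by-parts identity used in Case~3 of the proof of Theorem~\ref{dbar-solve}.
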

\begin{proof}
The main issue is to prove the uniqueness. Suppose uniqueness does
not hold. Then there exists a function $h$ with $e^{-izk}h \in
L^4(\mathbb C\setminus \Omega)$ so that $\dbar h = 0$ and
\[
\Re(\nu h) = \Hq (\Im(\nu h)) \qquad \text{in }{\partial\Omega}.
\]
Using the solvability result in Theorem~\ref{dbar-solve}
we solve the problem
\begin{equation}\label{qv}
\left\{ \begin{array}{cc} \dbar v = q v   & \text{in } \Omega \cr
 \Im ( \nu v ) =  \Im(\nu h) & \text{on } \partial \Omega.
\end{array}
\right.
\end{equation}
Then in view of the definition of $\mathcal H_q$ we must have also
\[
\Re(\nu v )  = \Re(\nu h) \qquad \text{on }{\partial\Omega}.
\]
Now let
\begin{align}
\phi = \begin{cases}  v \text{ in }\ol\Omega\\
h\text{ in }\C\backslash\ol\Omega.
\end{cases}
\end{align}
Then $m=\phi e^{-izk}\in L^4(\R^2)$. We have shown that $\phi$
(hence $m$) is continuous across $\partial\Omega$. In view of
 (\ref{set})(i) and (\ref{qv}), $m$ is a weak solution of
\begin{align}
\dzb m-e_{-k}\qT\ol m = 0
\end{align}
in all of $\R^2$. Lemma \ref{perturb} now shows that $m=0$. This
proves uniqueness. It is also clear that $\psi_+(z,k)$ restricted
to $\C\backslash\ol\Omega$ is a solution of (\ref{set}). This
completes the proof the the Lemma.
\end{proof}

For computational purposes one can use layer potentials to reduce
(\ref{set})  to a problem on $\partial\Omega$. We will not pursue
this here.

\end{proof}
Finally we return to the original Calder\'on problem with $\log
\sigma \in \dot H^1$. We begin with the solvability question.

\begin{proof}[Proof of Theorem~\ref{t:bvp-solve}]
Without loss of generality, we may assume $g$ is real-valued. 
In complex notation, the equation (\ref{bvp}) takes the form
\begin{align*}
\ol\partial(\sigma\partial u) + \partial(\sigma\ol\partial u) =0.
\end{align*}
For real valued $u$, the standard substitution $v=\sigma^\half\partial u$ then yields a solution of (\ref{dbar-model-eq}) with $\qT$ defined by (\ref{q-def}). Thus, in view of (\ref{du_dtau}), $v$ solves the boundary value problem (\ref{dbar-model}) with $g_0 = -\half \frac{\partial g}{\partial \tau}$. But this problem is uniquely solvable by Theorem \ref{dbar-solve}. Consequently $\sigma^\half\partial u$ is uniquely determined. This immediately yields $u$. We also have (see (\ref{du_dnu})):
\begin{align*}
\frac{\partial u}{\partial \nu} = 2\Re(\nu v) \in L^2(\partial\Omega),
\end{align*} 
and (\ref{DtN_H}) holds on $H^1(\partial\Omega)$.
\end{proof}

Finally, Theorem \ref{reconst} on the Calder\'on problem is now an
easy consequence of the previous results.

\begin{proof}[Proof of Theorem~\ref{reconst}]
Given $\Lambda_\sigma$, we use (\ref{DtN_H}) to determine $\mathcal H_\qT$. From Theorem \ref{dbar-inverse} we have a method to
reconstruct $\qT=-\half\partial \log \sigma$. Since $\log\sigma$
is assumed known on $\partial\Omega$, and we have determined its
gradient, we can recover this function inside $\Omega$.
\end{proof}

\section{Appendix}\label{Section:Appendix}

The following is a restatement of Lemma 4.2 in \cite{Nac_p}. It is reproduced here with the proof, for completeness:

\begin{lemma}\label{compactness}
If $a\in L^2(\C)$ then the operator $f\mapsto\Icb(a f)$ is compact on $L^r(\C)$, $2<r<\infty$.
\end{lemma}
\begin{proof}
By duality, it suffices to show that $a\Ic$ is compact on $L^p$, $1<p<2$. We have
\begin{align}\label{ineq_3}
\|a\Ic f\|_{L^p}\leq \|a\|_{L^2}\|\Ic f\|_{L^{p^*}}\leq c\|a\|_{L^2}\|f\|_{L^p}.
\end{align}
First suppose $a$ is a $C^1$ function with compact support in, say, a disk $D$. Then by the boundedness of the Beurling transform on $L^p$:
\begin{align*}
\|\nabla(a\Ic f)\|_{L^p}\leq \|\nabla a\|_{L^2}\|\Ic f\|_{L^{p^*}} + \|a\|_{L^\infty}\|\nabla\Ic f\|_{L^p}\leq c\|f\|_{L^p}.
\end{align*}
Thus, the image under $a\Ic$ of the unit ball in $L^p$ lies in 
$\{u\in L^p(D):\|u\|_{L^p}\leq c,\|\nabla u\|_{L^p}\leq c\}$, which is compact. Now let 
$a$ be arbitrary in $L^2(\C)$ and let $\{a_j\}$ be a sequence of $C^1$ functions of compact support converging to $a$ in $L^2$. The corresponding operators $a_j\Ic$ are compact and norm convergent by (\ref{ineq_3}), hence their limit, too, is compact.
\end{proof}

\bibliographystyle{plain}
\bibliography{XBib}

\end{document}